\documentclass[11pt]{article}
%=========================================================================
% PACKAGES AND COMMANDS
%=========================================================================
	%\usepackage[dvipdfmx]{graphicx}
	\usepackage{amsmath}
	\usepackage{amssymb}
	\usepackage{calligra}
	\usepackage{amsthm}
	\usepackage{graphicx}
	
	\usepackage[hidelinks]{hyperref}
	\usepackage{bm}
	\usepackage{url}
	\usepackage{float}
	\usepackage{cite}
	\usepackage{mathtools}
	\mathtoolsset{showonlyrefs=true}
	\usepackage{subcaption}
	\usepackage{mathrsfs}
	\usepackage[usenames]{color}
	\usepackage[table,xcdraw]{xcolor}
	\usepackage{comment}
	\theoremstyle{definition}
	\newtheorem{dfn}{Definition}[section]
	\newtheorem{thm}[dfn]{Theorem}
	\newtheorem{prop}[dfn]{Proposition}
	\newtheorem{lem}[dfn]{Lemma}
	\newtheorem{cor}[dfn]{Corollary}
	\newtheorem{rem}[dfn]{Remark}

	\newtheorem{hyp}{Hypothesis}
	\newcommand{\R}{\mathbb{R}}
	
	\newcommand{\Z}{\mathbb{Z}}
	\newcommand{\N}{\mathbb{N}}
	\newcommand{\C}{\mathbb{C}}
		\newcommand{\overbar}[1]{\mkern 1.5mu\overline{\mkern-1.5mu#1\mkern-1.5mu}\mkern 1.5mu}
	\newcommand{\bx}{\bar{x}}
	\newcommand{\bz}{\bar{z}}
	\newcommand{\by}{\bar{y}}

	\newcommand{\fft}{\mathrm{F}\mathrm{F}\mathrm{T}}

	\newcommand{\bydef}{\,\stackrel{\mbox{\tiny\textnormal{\raisebox{0ex}[0ex][0ex]{def}}}}{=}\,}

	\newcommand{\fix}{\mathrm{F}\mathrm{i}\mathrm{x}\hspace{.16667em plus .08333em}}

	\newcommand{\mi}{\mathrm{i}}
	\newcommand{\mx}{\mathrm{x}}

	\numberwithin{equation}{section}
	\usepackage[top=1in, bottom=1in, left=1in, right=1in]{geometry}

	\DeclareMathOperator{\sign}{sign}
	\DeclareMathOperator{\Real}{Re}
	\DeclareMathOperator{\Imag}{Im}
	\renewcommand{\Re}{\text{Re}}
	\renewcommand{\Im}{\text{Im}}
	\definecolor{orange-red}{rgb}{1.0, 0.27, 0.0}

\usepackage{calligra}

\DeclareMathAlphabet{\mathpzc}{OT1}{pzc}{m}{it}
	\graphicspath{{./Figures/}}

% Begin Document
\begin{document}
%=========================================================================
% COVER PAGE
%=========================================================================

	% Title, Authors and Date
	\title{
	From heteroclinic loops to homoclinic snaking in reversible systems: \\ rigorous forcing through computer-assisted proofs
	%Homoclinic snaking in reversible systems via forcing theory and rigorous numerics
	}
	\author{
		Jan Bouwe van den Berg \thanks{VU Amsterdam, Department of Mathematics, De Boelelaan 1081, 1081 HV Amsterdam, The Netherlands (\texttt{janbouwe@few.vu.nl})
		}
		\and 
		Gabriel William Duchesne \thanks{Department of Mathematics and Statistics, McGill University, 805 Sherbrooke West, Montreal, QC H3A 0B9, Canada (\texttt{gabriel.duchesne@mail.mcgill.ca})
		}
		\and
		Jean-Philippe~Lessard \thanks{Department of Mathematics and Statistics, McGill University, 805 Sherbrooke West, Montreal, QC H3A 0B9, Canada (\texttt{jp.lessard@mcgill.ca})
		}
	}
	%\date{\today}
	\date{}
	\maketitle
	
	% Abstract
	\begin{abstract}
%!TEX root = main.tex

Homoclinic snaking is a widespread phenomenon observed in many pattern-forming systems. Demonstrating its occurrence in non-perturbative regimes has proven difficult, although a forcing theory has been developed based on the identification of patterned front solutions. These heteroclinic solutions are themselves challenging to analyze due to the nonlinear nature of the problem. In this paper, we use computer-assisted proofs to find parameterized loops of heteroclinic connections between equilibria and periodic orbits in time reversible systems. This leads to a proof of homoclinic snaking in both the Swift–Hohenberg and Gray–Scott problems. Our results demonstrate that computer-assisted proofs of continuous families of connecting orbits in nonlinear dynamical systems are a powerful tool for understanding global dynamics and their dependence on parameters.
	\end{abstract}
	
	% Keywords
	\begin{center}
	{\bf \small Keywords} \\ \vspace{.05cm}
	{\small Homoclinic snaking $\cdot$ Heteroclinic connections $\cdot$ Time-reversible systems $\cdot$ Computer-assisted proofs $\cdot$ Swift-Hohenberg equation $\cdot$ Gray-Scott equation}
	\end{center}
	
	\begin{center}
	{\bf \small Mathematics Subject Classification (2020)} \\ \vspace{.05cm}
	{\small 34C37 $\cdot$ 37M21 $\cdot$ 35B36 $\cdot$ 65G40 $\cdot$  65T40 $\cdot$ 42A10 $\cdot$ 37C79}
	\end{center}
	
	% Table of Content
	% \tableofcontents
	
%=========================================================================
% INTRODUCTION
%=========================================================================	
\section{Introduction} \label{sec:intro}
%!TEX root = main.tex

Spatially localized patterns whose bifurcation diagrams exhibit an oscillatory structure known as \textit{snaking}, have been observed in a wide range of physical systems and model equations, see \cite{MR2665448,Burke2006,Coullet2000, Pomeau1986, Woods1999}. Although this phenomenon is more widespread, to simplify the discussion and to make our computations effective, we focus on the case of {\em time reversible} systems in $\R^4$:
\begin{equation} \label{eq:general_system}
	U' = f(U , \alpha) \in \R^4  ,
\end{equation} 
where $\alpha \in \R$ is a parameter. 
In that setting, the snaking patterns correspond to orbits which are homoclinic to an equilibrium, say the origin, and which visit a small neighborhood of a periodic orbit $\Gamma$. This implies that these homoclinic orbits have a long central segment where they oscillate. We follow~\cite{Aougab2017} and denote the approximate length of this oscillatory segment by $2L$. When varying the parameter~$\alpha$ in the system and following the branch of these homoclinic orbits in the bifurcation diagram, one observes oscillations in the parameter value, whereas $L$ grows to infinity: the bifurcation curve is said to \emph{snake}. This is depicted in Figure \ref{fig:pattern_and_snaking}. For very large $L$, the authors of \cite{ Beck2009, Aougab2017 } interpret these localized patterns as concatenations of front and back interfaces connecting a periodic structure $\Gamma$ to the origin. Under appropriate assumptions on the geometry and transversality of the stable manifold of the equilibrium and the unstable manifold of the periodic orbit, they prove the existence of infinitely many localized patterns within a bounded parameter interval for systems on $\R^4$ that are either reversible or Hamiltonian (or both). 
Essentially they prove a forcing theorem, namely, they  
identify explicit conditions on a family of \emph{heteroclinic} orbits between equilibria and periodic orbits (Figure \ref{fig:pattern_hetero}) that \emph{force} the occurrence of either an infinite snake or an infinite stack of isolas of \emph{homoclinic} orbits (Figure \ref{fig:pattern_and_snaking}). 

%\jb{Not that the picture on the left in Figure 2 is confusing since it describes $W^u(0)$ connecting to $W^{s}(\Gamma)$ rather than $W^{u}(\Gamma)$ connecting to $W^s(0)$. By reversibility this is equivalent, but it is not the way we describe the connection in the text.} \gwd{ I reversed the picture in the first panel.}

\begin{figure}
	\center
	\includegraphics[scale = 0.35]{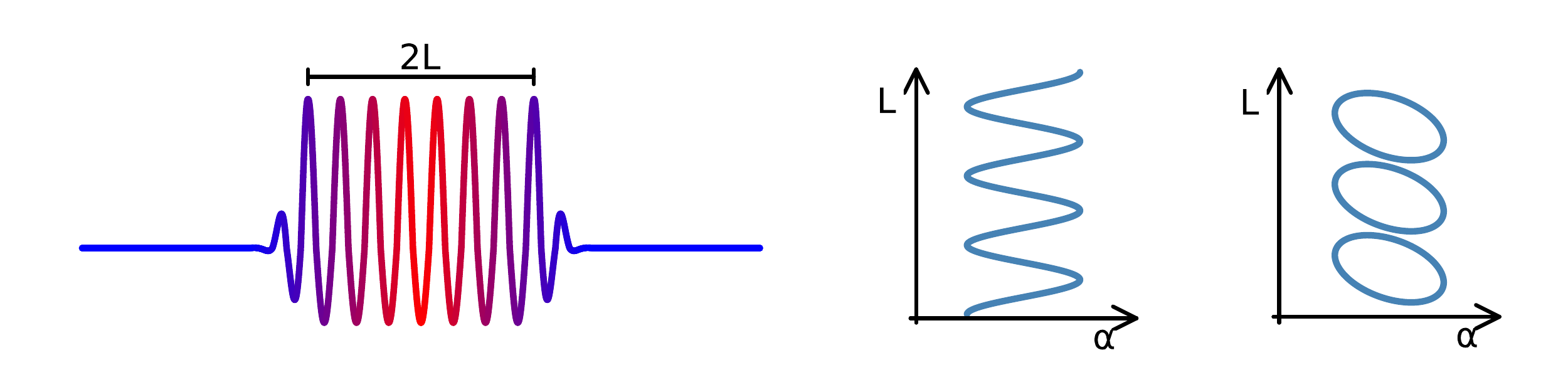} 
	\vspace{-.5cm}
	\caption{{\small Sketch of spatially localized patterns homoclinic to zero which spend time $2L$ in the neighborhood of a periodic orbit of \eqref{eq:general_system} (left panel), along with the corresponding bifurcation diagrams exhibiting snaking behavior (center panel) and disconnected infinite stacked isolas (right panel). This figure mimics~\cite{Aougab2017}.}} \label{fig:pattern_and_snaking}
\end{figure}
\begin{figure}
	\center
	\includegraphics[scale = 0.20]{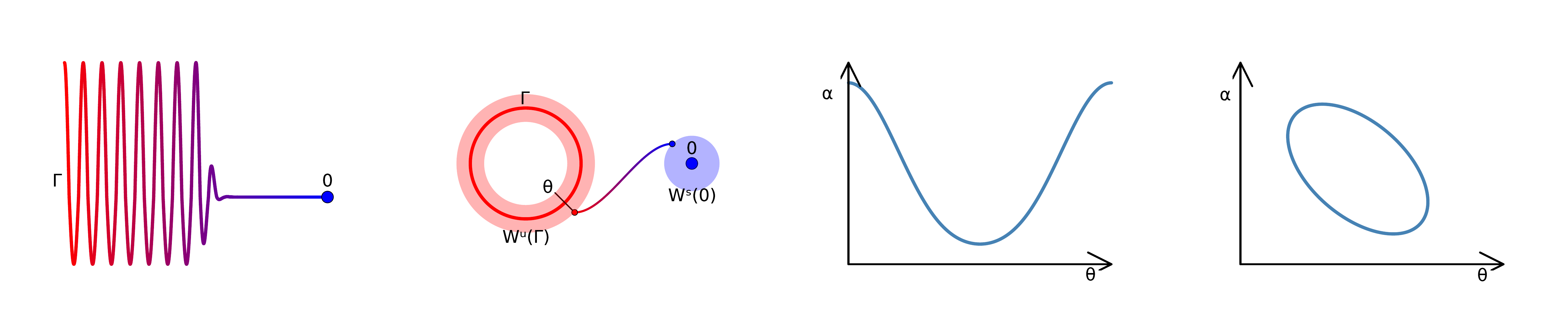} 
	\vspace{-1cm}	
	\caption{{\small Shown are graphs of the heteroclinic orbit connecting the equilibrium at the origin to the periodic orbit $\Gamma$ of \eqref{eq:general_system} (first panel), along with a representation of this connection via a finite orbit segment bridging the unstable and stable manifolds (second panel). The last two panels depict the continuation diagram of the heteroclinic orbit, visualized through the variation of the parameter $\alpha$ versus the angle $\theta$ that characterizes the connection between the unstable manifold $ W^u (\Gamma)$ and the stable manifold $W^s(0)$. These final panels illustrate the cases of snaking and isolas, respectively.}} \label{fig:pattern_hetero}
\end{figure}

While \cite{ Beck2009, Aougab2017 } thus provides a very strong forcing result, verifying the hypotheses on the heteroclinic orbits for a given nonlinear systems is challenging, as these involve specific geometric and dynamical properties of the manifolds involved.
In this paper, we present a general computer-assisted method for validating the necessary conditions on the heteroclinic orbits. We focus specifically on the family of second-order systems given by
\begin{equation}\label{Second Order System} 
	\left\{
	\begin{aligned}
		u''(t) &= {f_1}(u(t),v(t), \alpha) \\ 
		v''(t) &= {f_2}(u(t),v(t), \alpha),
	\end{aligned}
	\right. 
\end{equation}
where ${f_1,f_2}$ are polynomial functions in $u$, $v$ and $\alpha$. This class of systems includes several well-known models that exhibit snaking behavior, such as the Swift–Hohenberg equation \cite{Burke2006} and the Gray–Scott equation \cite{ Gandhi2018, AlSaadi2021}. 
Systems of the form \eqref{Second Order System} are reversible. Depending on the nonlinearities $f_1$ and $f_2$ the system can also be Hamiltonian, but we do not use that in our analysis in any way. The restriction to polynomial nonlinearities is purely to limit technicalities and we refer to~\cite{bergbredenlessardmireles} for the details of how to extend the construction to analytic nonlinearities.

The method developed in this work, along with the accompanying code, is designed to be general and applicable to any system of the form \eqref{Second Order System}. Our approach is to reformulate the existence of a heteroclinic orbit as a zero-finding problem on a Banach space, and to apply a Newton–Kantorovich method to rigorously validate the solution near a numerical approximation. The heteroclinic orbit is computed by locally parametrizing the stable manifold of the equilibrium $W_{\rm loc}^s(0)$ \cite{Cabre2003,Cabre2003a,Cabre2005} and the unstable manifold of the periodic orbit $W_{\rm loc}^u(\Gamma)$ \cite{Castelli2015,Castelli2017}, and subsequently connecting these two manifolds \cite{Berg2018}. Moreover, we extend this validation using a rigorous pseudo-arclength continuation method \cite{Berg2010,Breden2023} to demonstrate that the bifurcation diagram of the heteroclinic connections is periodic (in the sense of either of the two diagrams on the right in Figure~\ref{fig:pattern_hetero}). The zero-finding problem is constructed so that all the necessary conditions for the existence of snaking, as formulated in \cite{ Beck2009, Aougab2017}, are satisfied for systems of the form \eqref{Second Order System}.
We present results proving both snaking and isola structures in the Swift–Hohenberg equation (which is a special case of~\eqref{Second Order System} with $f_1=v$), as well as snaking behavior in the Gray–Scott model. The details of these applications, which  demonstrate the robustness and generality of our method for systems of the form \eqref{Second Order System}, are provided in Section~\ref{sec:application}. 

Before diving into technicalities, we now outline the main construction from~\cite{Beck2009, Aougab2017}, restricted to the context of the system~\eqref{Second Order System}. This allows us to set the stage for the branch of heteroclinic connections from the periodic orbit $\Gamma$ to the equilibrium $0$, which serves as the main ingredient for the snaking (or isolas) forcing theorem, and for which we develop a computer-assisted proof in the remainder of the paper. 

%%%%%%%%%%%%%%%%%%%%%%%%%%%%%%%%%%

We start by rewriting~\eqref{Second Order System} as the first-order system
\begin{equation}\label{First Order System}
u'(t) = 
\begin{pmatrix} u'_1(t) \\ u'_2(t) \\ u'_3(t) \\ u'_4(t) \end{pmatrix}
=  
\begin{pmatrix} u_2(t) \\ f_1(u_1(t),u_3(t), \alpha ) \\ u_4(t) \\ f_2(u_1(t),u_3(t), \alpha) \end{pmatrix}
\bydef 
	\begin{pmatrix}
		\Psi_1(u) \\
		\Psi_2(u, \alpha) \\
		\Psi_3(u) \\
		\Psi_4(u, \alpha) 
	\end{pmatrix} = \Psi(u , \alpha) ,
\end{equation}
with  $u = ( u_1,u_2,u_3,u_4 )$. This system is reversible: the linear map $$\mathcal{R}:( u_1,u_2,u_3,u_4 ) \mapsto ( u_1,-u_2,u_3,-u_4 )$$ satisfies 
 $\mathcal{R}^2 = 1$ and $ \dim \fix \mathcal{R} = 2$, and it interacts with the vector field through $\Psi\left( \mathcal{R}u, \alpha \right) = - \mathcal{R} \Psi(u,\alpha)$ for all $(u,\alpha)$.
\begin{dfn} \label{def:symmetrix}
A solution $u(t)$ of \eqref{First Order System} is said to be {\em symmetric} if $u(0) \in \fix \mathcal{R}$. 
\end{dfn}

For simplicity, we will assume that, possibly after a change of variable,  $f_1(0,0,\alpha)=f_2(0,0,\alpha)=0$. This implies that the origin is an equilibrium of the vector field $\Psi$. In order for homoclinic snaking to be possible, we are naturally led to assume that this equilibrium is hyperbolic.
\begin{hyp}\label{hyp:hyperbolic}
	The origin $u = 0$ is a hyperbolic equilibrium: $\Psi(0,\alpha) = 0$ for all $\alpha$ and $ D_u\Psi (0,\alpha)$ has two eigenvalues with strictly negative real part and (by reversibility) two eigenvalues with strictly positive real part.
\end{hyp}
Many computational methods have been developed to rigorously compute eigenvalues and eigenvector of matrices (see \cite{Yamamoto1980,Yamamoto1982,Rump2001,Hladik2010,Castelli2011}). In our setup, the computation of these eigenvalues and associated eigenvectors will be part of the zero-finding formulation used to compute the local parameterization of the stable manifold $W_{\rm loc}^s(0)$. Details are provided in Section~\ref{sec:maps}.

To define the (nondegenerate) heteroclinic connections between the symmetric periodic orbits and the origin, which form the basis for the forcing theorem, we introduce $S^1 \bydef \R / (2\pi\Z)$ and the product set $\mathcal{X} \bydef \R \times \R^+ \times C^1(S^1,\R^4) \times C^1(\R,\R^4)$, see also~\cite{sandstede_notes_2025}.
\begin{dfn}\label{dfn:patternedfront}
	An element $x^* = ( \alpha^* , \omega^* , \gamma^*, u_{\text{het}}^* ) \in \mathcal{X}$ is called a \textit{patterned front} if 
	\begin{enumerate}
		\item 
		the function $u_{\text{per}}^*(t) \bydef \gamma^* (\omega^* t)$ is a \emph{symmetric}  periodic orbit of \eqref{First Order System} for $\alpha=\alpha^*$ with two  Floquet multipliers at either $e^{\pm 2\pi \lambda^*}$ or $-e^{\pm 2\pi \lambda^*}$ with $\lambda^* >0$. In particular, it has period $2\pi/\omega^*$ and $u_{\text{per}}^*(0) \in \fix \mathcal{R}$ as well as $u_{\text{per}}^*(\pi/\omega^*) \in \fix \mathcal{R}$. 
		\item
		the function $u_{\text{het}}^*(t)$ is a solution of \eqref{First Order System} for $\alpha = \alpha^* $ such that $\lim_{t\to\infty}u_{\text{het}}^*(t) =0$ and 
there is a phase $\theta^* \in S^1$ such that $| u_{\text{het}}^*(t) - \gamma^*( \omega^*t + \theta^*) | \rightarrow 0$ as $t \rightarrow - \infty$.
	\end{enumerate}
\end{dfn}
\begin{rem}
	The sign of the Floquet multipliers determines the orientability of the unstable bundle of the periodic orbit: positive multipliers imply orientability, while negative multipliers correspond to a non-orientable bundle. 
	\end{rem}
\begin{rem}
While the time shift invariance has been lifted for the periodic orbit by requiring that $\gamma^*(0) \in \fix \mathcal{R}$, for the heteroclinic orbit the phase condition has not been made explicit. From now on we assume that a phase condition has been chosen. One choice is $|u_{\text{het}}^*(0)|=\delta$ for some small value $\delta>0$, although later on we settle on a different somewhat more natural choice.
In any case, this then fixes the value of $\theta^*$ up to multiples of $2\pi$.
\end{rem}

% Hence, assuming that $u^* = ( \alpha^* , \omega^* , \gamma^*, u_{\text{het}}^* ) \in \mathcal{X}$ is a patterned front, the component $u_{\text{het}}^*$ correspond to a heteroclinic orbit connecting a $ 2\pi \omega^* $-periodic orbit (rescaled to the $2\pi$-periodic orbit $\gamma^*$) to the equilibrium at the origin.

To compute patterned front solutions, we will formulate a zero-finding problem that simultaneously encodes for the periodic solution $\gamma^*$, the frequency $\omega^*$, the angle $\theta^*$ and the Floquet exponent $\lambda^*$. We expand the components of $\gamma^* = (\gamma^*_1,\gamma^*_2,\gamma^*_3,\gamma^*_4)$ using a cosine Fourier series for $\gamma^*_1$ and $\gamma^*_3$ and a sine Fourier series for $\gamma^*_2$ and $\gamma^*_4$, ensuring that $ \gamma^*(0) \in \fix \mathcal{R} $ by construction. More details on solving for $\theta^*$ and $\lambda^*$ are given in Section \ref{sec:maps} when defining the connecting segment of $u_{\text{het}}^*$ and the unstable manifold of $\gamma^*$.

What we need for the forcing theorem is not just a single patterned front, but a one-parameter family, a loop, of patterned fronts. Moreover, this family is required to be ``in general position'' for the forcing theorem to be applicable. We will make this precise below. Fortunately, these conditions are also natural ones for our computer-assisted proof.

First, the equilibrium at the origin is assumed to be hyperbolic, see Hypothesis~\ref{hyp:hyperbolic}. The non-degeneracy condition on the Floquet multipliers in Definition~\ref{dfn:patternedfront} says that $u^*_{\text{per}}$ is a symmetric \emph{hyperbolic} periodic orbit of \eqref{First Order System} for $\alpha = \alpha^*$. 
This implies that, for fixed parameter values, it is locally part of a smooth one-parameter family $u_{\text{per}}(\kappa)$ of symmetric periodic orbits parametrized by
$|\kappa|<1$, and $u^*_{\text{per}}=u_{\text{per}}(0)$. Since we are essentially following orbits in the extended phase space of the pair $(u,\alpha) \in \R^4 \times \R \cong \R^5$, we consider this one-parameter family as part of a smooth two-parameter family $u_{\text{per}}(\kappa,\alpha)$ for $\alpha$ near $\alpha^*$. Following~\cite{sandstede_notes_2025}, we define the union of unstable manifolds, which can also be interpreted as the center-unstable manifold in the extended phase space,
\begin{equation*}
	\widetilde{W}_{\text{loc}}^{cu}(u_{\text{per}}^*; \alpha^*) \bydef \bigcup_{|\alpha-\alpha^*|<\epsilon, |\kappa|<1}
	W^{u}(u_{\text{per}}(\kappa;  \alpha))  \subset \R^4 \times \R ,	
\end{equation*}
for some small $\epsilon>0$.
Similarly, near the origin we define
\begin{equation*}
	\widetilde{W}_{\text{loc}}^{cs}(0; \alpha^*) \bydef \bigcup_{|\alpha-\alpha^*|<\epsilon}
	W^{s}(0;  \alpha)  \subset \R^4 \times \R .
\end{equation*}
Counting dimensions, we infer that the sets $\widetilde{W}_{\text{loc}}^{cu}(u_{\text{per}}^*; \alpha^*) $ and $\widetilde{W}_{\text{loc}}^{cs}(0; \alpha^*) $ are smooth 4-dimensional and 3-dimensional manifolds, respectively, consisting of orbits of the vector field. We note that the localization for these sets is in the parameter directions ($\alpha$ and $\kappa$), but not in the flow direction. Connecting orbits (patterned fronts) can thus be interpreted as intersections of these manifolds. If these manifolds intersect \emph{transversely} in a 2-dimensional smooth manifold, then, after lifting the time translation invariance, this results locally in a one-parameter family of patterned fronts.
This leads to the following concept of a \emph{regular} patterned front.
\begin{dfn} \label{defn:regular patterned front}
An element $x^* = ( \alpha^* , \omega^* , \gamma^*, u_{\text{het}}^* ) \in \mathcal{X}$ is called a {\em regular patterned front} if it is a patterned front and the manifolds $\widetilde{W}_{\text{loc}}^{cu}(u_{\text{per}}^*; \alpha^*)$ and $\widetilde{W}_{\text{loc}}^{cs}(0; \alpha^*)$ intersect transversely in $\R^5$ along the heteroclinic orbit $(u^*_{\text{het}};\alpha^*)$.
\end{dfn}

The condition that forces the homoclinic snaking/stacked isolas is the existence of a parametrized closed curve of regular patterned front~\cite{sandstede_notes_2025}.
\begin{hyp} \label{hyp:regular_patterned_front}
	There is a function $\mathcal{S}: \R / \Z \longrightarrow \mathcal{X}$ defined by
	\begin{align}\label{patterned_front}
		\mathcal{S}(s) = \left( \alpha(s) , \omega(s) , \gamma(\cdot, s) , u_{\text{het}}(\cdot,s) \right)
	\end{align}
	of class $C^1$ such that $\mathcal{S}(s)$ is a regular patterned front for each $s$, with $\mathcal{S}_s(s) \neq 0$ for all $s$.
\end{hyp}

In our rigorous computational framework, continuation along the branch of heteroclinic solutions is realized by extending the zero-finding map $F$ for patterned fronts through a pseudo-arclength continuation method.
By choosing judiciously the extra equation for the parameterization of the curve in the pseudo-arclength continuation method, the non-degeneracy condition $\mathcal{S}_s(s) \neq 0$ in Hypothesis~\ref{hyp:regular_patterned_front} is verified a posteriori using interval arithmetic, see Corollary~\ref{cor_cond_smooth}. The property that the patterned fronts we find are regular, i.e.\ transversality, is proven as part of the Newton-Kantorovich argument, see Section~\ref{sec:newtonkantorovich} and Remark~\ref{rem:transversality}. 

As shown in \cite{ Beck2009, Aougab2017, sandstede_notes_2025 }, the dichotomy between snakes and isolas is determined by the global behavior of the solution’s phase as it is tracked along the loop of patterned fronts. For the phase $\theta^*$ in Definition~\ref{dfn:patternedfront} to be well-defined, one needs both to fix consistently (along the loop) the shift invariance of the heteroclinic orbit \emph{and} to use the symmetry  of the periodic orbit (fixing its phase). In this paper we define the phase $\theta^*$ in a manner that fits well with the computer-assisted proof. Namely, after constructing the unstable manifold of the symmetric periodic orbit by using a Fourier-Taylor series based on the Parameterization Method, we read off the phase of the heteroclinic orbit at a fixed distance from the periodic orbit along its unstable manifold. In view of the invariance equation governing the parameterization, this leads both to a natural distance and a natural phase $\theta^*(s)$ for $s \in \R / \Z$, which can be traced consistently along the loop of patterned fronts. We refer to Section~\ref{sec:connectingorbit} and Remark~\ref{rmk:phase_orbit} for the details of the construction. 
%\jb{IN SECTION 2.3 WE NEED A REMARK ABOUT THE PHASE. THEN WE CAN REFER TO IT HERE.}  \gwd{I added a remark in section 2.3}
Next, let the continuous function $\tilde{\theta}: [0,1] \rightarrow \R$ be the unique lifting of $\theta^*: [0,1] \rightarrow S^1$ satisfying $\tilde{\theta}(0) = \theta^*(0)$.  Clearly $\tilde{\theta}(1) \equiv \tilde{\theta}(0) \mod 2\pi$. In our computer-assisted proof we can trace the value of $\tilde{\theta}$ around the loop.

This concludes the description of the loop of heteroclinic orbits representing patterned fronts, whose existence and properties are established in the present work. These orbits serve as the forcing mechanism for the homoclinic localized states studied in~\cite{Beck2009, Aougab2017, sandstede_notes_2025}. In this paper, we provide only a brief overview of the relevant forcing results and refer the reader to~\cite{Beck2009, Aougab2017}, and in particular to Theorem 1 in \cite{sandstede_notes_2025} (which does not rely on a Hamiltonian structure) for a detailed description. The relevant homoclinic orbits shadow the concatenation of the three orbits $\mathcal{R} u^*_{\text{het}}$, $u^*_{\text{per}}$ and $u^*_{\text{het}}$. The time $2L$ which the homoclinics spend near $u^*_{\text{per}}$ can be made precise, and the forcing result concerns the asymptotics for $L \to \infty$. 
All the relevant homoclinic orbits cross $\fix \mathcal{R}$ either near $u^*_{\text{per}}(0)$ or near $u^*_{\text{per}}(\pi/\omega^*)$, thus distinguishing two types of localized patterned states.
Assuming the existence of a loop of regular patterned fronts, the results in~\cite{ Beck2009, Aougab2017,sandstede_notes_2025 } on localized homoclinic patterns can be summarized in the following forcing theorem, depending on the homotopy type of the phase curve $\tilde{\theta}$.
\begin{thm}[{\bf Forcing Theorem}] \label{thm:forcing_theorem} Assume Hypotheses~\ref{hyp:hyperbolic} and~\ref{hyp:regular_patterned_front} are satisfied, and let $\tilde{\theta}$ be the lifting of the phase defined above.
If $\tilde{\theta}(1) \neq \tilde{\theta}(0)$, then for large $L$ there are localized patterned state in \eqref{Second Order System} whose $(\alpha,L)$ bifurcation diagram is described by two interlaced snaking curves. If $\tilde{\theta}(1) = \tilde{\theta}(0)$, then for large $L$ there are localized patterned states in \eqref{Second Order System} whose $(\alpha,L)$ bifurcation diagram is described by an infinite stack of isolas.
\end{thm}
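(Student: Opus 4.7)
My plan is to follow a Lin/Shilnikov shooting argument in the spirit of~\cite{Beck2009, Aougab2017, sandstede_notes_2025}, adapted to the reversible setting of~\eqref{First Order System}. Candidate homoclinic orbits are built as shadows of the concatenation $\mathcal{R} u^*_{\text{het}} \cup u^*_{\text{per}}|_{[-L,L]} \cup u^*_{\text{het}}$, and the existence of genuine solutions is reduced to a finite-dimensional matching problem; the reversibility $\mathcal{R}$ then cuts the dimension in half and distills everything to a single scalar phase-matching equation whose topology governs the snake/isola alternative.

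First I would fix small cross-sections $\Sigma^\pm$ transverse to the heteroclinic pieces near where they enter, respectively exit, a tubular neighborhood of the periodic orbit. By Hypothesis~\ref{hyp:regular_patterned_front} the manifolds $\widetilde{W}^{cu}_{\text{loc}}(u^*_{\text{per}}(s);\alpha(s))$ and $\widetilde{W}^{cs}_{\text{loc}}(0;\alpha(s))$ meet transversely along $u^*_{\text{het}}(\cdot,s)$, so exponential dichotomies hold along the three pieces with constants that are uniform in $s \in \R/\Z$. Using these dichotomies I would parametrize, in Lin's fashion, all pseudo-orbits that follow $\mathcal{R} u^*_{\text{het}}$ backward, remain $\epsilon$-close to $u^*_{\text{per}}$ for a time $2L$, and then follow $u^*_{\text{het}}$ forward; the Lin jumps across $\Sigma^\pm$ become smooth functions of $(s,L,\alpha)$ together with a phase variable on $u^*_{\text{per}}$. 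Transversality forces the jumps in the ``natural'' directions to decay like $O(e^{-c L})$, and an implicit-function argument eliminates all coordinates except one scalar equation.

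This remaining equation is obtained cleanly from reversibility. Since $\dim \fix \mathcal{R} = 2$ and the vector field is $\mathcal{R}$-reversible, any orbit meeting $\fix \mathcal{R}$ is $\mathcal{R}$-symmetric, and a symmetric orbit tending to $0$ as $t\to+\infty$ is automatically homoclinic. It therefore suffices to look for half-orbits starting on $\fix \mathcal{R}$ and landing in $W^s(0)$. The two symmetric points of $u^*_{\text{per}}$, namely $u^*_{\text{per}}(0)$ and $u^*_{\text{per}}(\pi/\omega^*)$, yield two candidate families indexed by $j\in\{0,1\}$. Projecting the Lin midpoint onto $\fix \mathcal{R}$ and invoking the construction of the phase described in the excerpt gives, after the reductions above, a scalar matching equation of the schematic form
\begin{equation*}
\tilde{\theta}(s) + \omega(s)\, L \;\equiv\; j\pi + \psi_j(s,\alpha) + r_j(s, L, \alpha) \pmod{2\pi},
\end{equation*}
where $\psi_j$ is smooth and $1$-periodic in $s$, and $r_j = O(e^{-c L})$. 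The transversality built into Definition~\ref{defn:regular patterned front} guarantees that the restriction of this equation to the loop of patterned fronts is non-degenerate, so for each sufficiently large $L$ its zero set is a smooth $1$-manifold in $(\alpha,L)$.

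The dichotomy then follows from the homotopy class of $\tilde{\theta}$. If $\tilde{\theta}(1)=\tilde{\theta}(0)$ the left-hand side above is $1$-periodic in $s$, so the zero set closes up each time $s$ traverses $\R/\Z$; continuing in $L$ produces a discrete stack of closed curves in $(\alpha,L)$, i.e.\ an infinite family of isolas, with the two values of $j$ labelling the two types of localized patterned state. If $\tilde{\theta}(1)\neq\tilde{\theta}(0)$ the left-hand side picks up a nonzero increment $2\pi k$ as $s$ winds once around the loop, so the zero set cannot close up in $s$ alone: tracking $s$ continuously forces $L$ to grow by a definite amount per period, yielding two interlaced unbounded curves (one per $j$) with bounded oscillations in $\alpha$, which is precisely the snaking picture. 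The main technical obstacle is the uniform Shilnikov analysis near $u^*_{\text{per}}$, namely establishing the estimate $r_j = O(e^{-cL})$ with constants $c>0$ independent of $s\in\R/\Z$ and of $\alpha$ close to $\alpha(s)$; this is where the hyperbolicity of the periodic orbit and the uniform transversality built into Hypothesis~\ref{hyp:regular_patterned_front} are essential.
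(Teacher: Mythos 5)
The paper does not actually prove Theorem~\ref{thm:forcing_theorem}: it is imported wholesale from the literature, with the authors explicitly deferring to \cite{Beck2009,Aougab2017} and in particular to Theorem~1 of \cite{sandstede_notes_2025} for the argument. So there is no in-paper proof to compare against line by line. That said, your outline is a faithful reconstruction of the strategy used in those references: Lin/Shilnikov shooting along the concatenation $\mathcal{R}u^*_{\text{het}}\cup u^*_{\text{per}}\cup u^*_{\text{het}}$, reduction via reversibility to half-orbits from $\fix\mathcal{R}$ into $W^s(0)$ (with the two symmetric sections $u^*_{\text{per}}(0)$ and $u^*_{\text{per}}(\pi/\omega^*)$ producing the two interlaced families), a single scalar phase-matching equation with exponentially small remainder, and the topological dichotomy on the winding of $\tilde{\theta}$ deciding between snaking and stacked isolas. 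The structural insight is right, and the use of reversibility to get symmetric homoclinics for free is exactly the mechanism in the cited works. What you have, however, is a proof scheme rather than a proof: the uniform exponential dichotomies along the compact loop of fronts, the $O(e^{-cL})$ estimates on the Lin jumps, the implicit-function elimination down to one scalar equation, and the step translating transversality of $\widetilde{W}^{cu}_{\rm loc}$ and $\widetilde{W}^{cs}_{\rm loc}$ into nondegeneracy of the reduced equation are all asserted, and each is a substantial piece of analysis (this is precisely the content of \cite{Beck2009,sandstede_notes_2025}). If the intent is to cite the forcing result, as the paper does, your sketch is a good account of why it is true; if the intent is to prove it from scratch, the quantitative Shilnikov estimates near $u^*_{\text{per}}$, uniform in $s$ and in $\alpha$ near $\alpha(s)$, still need to be supplied.
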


The goal of this work is to develop a framework that enables the verification of the Forcing Theorem~\ref{thm:forcing_theorem} using computer-assisted methods, including proving the regularity/transversality of the patterned fronts and a rigorous evaluation of $\tilde{\theta}(1)-\tilde{\theta}(0)$, applicable across all problems within the family of systems \eqref{Second Order System}. The loop of heteroclinics forces an asymptotic part of the homoclinic snake with an infinite number of folds. By methods similar to those in this paper, establishing a finite part of the snake can also be addressed directly, although this is outside the current scope.  From a more global perspective, the paper illustrates that rigorous continuation of complex connecting orbits in nonlinear systems, as well as obtaining both qualitative and quantitative information about the orbits, is feasible by using state-of-the-art computer-assisted proof techniques. 

%%%%%%%%%%%%%%%%%%%%%%%%%%%%%%%%%%

The paper is organized as follows.
In Section~\ref{sec:maps}, we define in \eqref{eq:zero_finding_f} the zero-finding map $f$ for the heteroclinic orbits. Each  orbit connects the periodic orbit $u_{\text{per}}$ to $0$ and is constructed (and proved) by combining parameterizations of the local unstable manifold of $u_{\text{per}}$ and the local stable manifold of $0$. These manifolds are represented using Fourier–Taylor and Taylor–Taylor expansions respectively. The boundary value problem for the orbit segment connecting these local manifolds is solved by means of a Chebyshev series expansion. This zero-finding problem is supplemented with a rigorous pseudo-arclength continuation method that allows us to extend solutions to continuous branches of solutions with varying parameter $\alpha$. Section \ref{sec:bounds} introduces the Newton–Kantorovich theorem and outlines our computer-assisted proof strategy for rigorously validating the existence of the heteroclinic orbit near a numerical approximation. We detail the construction of the necessary Banach spaces and their Banach algebra structures, the finite-dimensional projections, and the computation of operator norms and convolution bounds that will be needed for our validation. Finally, in Section \ref{sec:application}, we discuss the computational challenges encountered in applying this framework, including memory optimization and error control across the parameter interval. We present computer-assisted proofs validating both snaking and isola structures in the Swift–Hohenberg equation, as well as snaking behavior in the Gray–Scott model. These applications demonstrate the robustness and generality of our method for systems of the form \eqref{Second Order System}.

%=========================================================================
% INTRODUCTION
%=========================================================================
% \section{Setup of patterned fronts} \label{sec:set-up}
%\input{setup.tex}
	
%=========================================================================
% ZERO_FINDING PROBLEM AND BOUNDS
%=========================================================================	
%!TEX root = main.tex
\section{Zero-finding problem for the regular patterned fronts} \label{sec:maps}

In this section, we formulate a zero-finding problem (see Equation~\eqref{eq:zero_finding_F}) whose solutions correspond to regular patterned fronts, as defined in Hypothesis~\ref{hyp:regular_patterned_front}. As regular patterned fronts are heteroclinic connections defined over unbounded time domains, compactifying the domain is essential for a well-posed numerical approach. To this end, we recast the heteroclinic connection as a projected boundary value problem (BVP), where the boundary conditions are specified to lie in the local unstable manifold $W_{\rm loc}^u(\Gamma)$ of the periodic orbit $\Gamma$ and the local stable manifold $W_{\rm loc}^s(0)$ of the origin, respectively. In Section~\ref{sec:unstable_manifold_PO}, we use the parameterization method to compute $W_{\rm loc}^u(\Gamma)$, deriving a sequence of intermediate problems to obtain a Fourier-Taylor expansion of the manifold. Section~\ref{sec:stable_manifold_origin} introduces a corresponding zero-finding formulation for the parameterization of the stable manifold $W_{\rm loc}^s(0)$. Finally, we compute an orbit segment connecting these two manifolds. By casting the connection as a BVP, we ensure that the resulting trajectory approximates a segment of the desired heteroclinic orbit. Throughout this section, the parameter~$\alpha$ is assumed to be fixed.

%%%%%%%%%%%%%%%%%%%%%%%%%%%%%%%%%%%%%%%%%%%%%%%%%%%%%%%%%%%%%%%%%%%%%%%%%%%%%%%%%%%%%%%
% Local unstable manifold of the periodic solution
%%%%%%%%%%%%%%%%%%%%%%%%%%%%%%%%%%%%%%%%%%%%%%%%%%%%%%%%%%%%%%%%%%%%%%%%%%%%%%%%%%%%%%%
\subsection{Local unstable manifold of the periodic solution} \label{sec:unstable_manifold_PO}

To find the unstable manifold of the periodic orbit, we develop an approach based on the parameterization method to obtain a rigorous enclosure of the local unstable manifold associated with a periodic orbit. The parameterization method is a functional-analytic framework that constructs invariant manifolds by seeking a mapping whose image satisfies an invariance equation. This approach provides a systematic and often rigorous means of computing local stable and unstable manifolds near fixed points or periodic orbits. For a more comprehensive treatment of the parameterization method, we refer the reader to \cite{Cabre2003,Cabre2003a,MR3467671,MR2299977,MR2240743}, and to \cite{Cabre2005,Castelli2015,Castelli2017,MR2551254,MR3118249,MR2851901} for its application in the context of periodic orbits.

We begin by rescaling system~\eqref{First Order System} so that the periodic solution becomes $2\pi$-periodic. Let $\tau$ denote the period of the periodic orbit for a fixed value of the parameter $\alpha$. We perform the time rescaling $t \mapsto \frac{2\pi}{\tau} t$, and define $\omega = \frac{\tau}{2\pi}$. Under this change of variables, system~\eqref{First Order System} becomes
 \begin{equation} \label{rescale first order system}		
 u'_i = \omega \Psi_i(u, \alpha), \quad \text{for } i = 1,2,3,4,
 \end{equation}
where $\omega$ is now treated as an additional unknown. Its value will be determined later when we construct the connecting orbit. 

The \emph{parameterization method} provides a functional framework for computing the local unstable manifold of the periodic solution by expressing it as the image of a smooth map $W: S^1 \times \R \rightarrow \R^4$ that satisfies the conditions
\begin{equation} \label{Para diff equa_first_order}
W(\theta, 0) = \gamma(\theta), \qquad \frac{\partial}{\partial \sigma} W(\theta, 0) = v(\theta),
\end{equation}
and solves the \emph{invariance equation}
\begin{equation} \label{Para diff equa}
\frac{\partial}{\partial \theta} W(\theta, \sigma) + \lambda \sigma \frac{\partial}{\partial \sigma} W(\theta, \sigma) = \omega \Psi(W(\theta, \sigma), \alpha),
\end{equation}
where $\gamma$ denotes the $2\pi$-periodic solution of~\eqref{rescale first order system}, $\lambda$ is the Floquet exponent, $v$ is the eigenvector field spanning the unstable bundle along $\gamma$, and $\omega$ is a frequency scaling parameter. The map $W$ thus encodes both the periodic orbit and its associated unstable dynamics, with the zeroth and first-order terms in $\sigma$ corresponding to $\gamma$ and $v$, respectively. Hence, to compute $ W^u(\Gamma) $, we proceed by first solving for $ \gamma $, $ \lambda $, $ \omega $, and $ v $. In Section~\ref{sec:PO}, we introduce in~\eqref{eq:map_periodic_sol} a zero-finding formulation whose solution yields the periodic orbit, which defines the zeroth-order term in the parameterization. This is followed in Section~\ref{sec:unstable_bundle_PO} by the computation of the eigenpair $(v,\lambda)$, obtained as the solution to the zero-finding problem defined in \eqref{eq:bundle_map}, corresponding to the first-order term. Finally, in Section~\ref{sec:high-order-term_WuGamma}, we derive in~\eqref{eq:high_order_term_WuGamma} a recursive system whose solution provides the higher-order coefficients of the parameterization, thereby completing the construction of a rigorous enclosure of the local unstable manifold $ W^u_{\mathrm{loc}}(\Gamma) $.

%%%%%%%%%%%%%%%%%%%%%%%%%%%%%%%%%%%%%%%%%%%%%%%%%%%%%%%%%%%%%%%%%%%%%%%%%%%%%%%%%%%%%%%
%% Periodic Solution
%%%%%%%%%%%%%%%%%%%%%%%%%%%%%%%%%%%%%%%%%%%%%%%%%%%%%%%%%%%%%%%%%%%%%%%%%%%%%%%%%%%%%%%

\subsubsection{Periodic solution} \label{sec:PO}

We aim to formulate a zero-finding problem whose solution yields the Fourier coefficients of the periodic solution of system~\eqref{rescale first order system}. According to Hypothesis~\ref{hyp:regular_patterned_front}, the periodic orbit $\gamma$ satisfies the symmetry condition described in Definition~\ref{def:symmetrix}, namely $\gamma(0) \in \fix \mathcal{R}$. This implies that the first and third components of $\gamma$ are even functions, while the second and fourth components are odd. Consequently, we represent the even components using cosine Fourier series and the odd components using sine Fourier series. Specifically, for indices $i = 1, 3$ and $j = 2, 4$, the components of $\gamma$ are expanded as
\begin{equation} \label{eq:Fourier series}
\gamma_i(\theta) \bydef (\gamma_i)_0+2 \sum_{k \geq 1 } (\gamma_i)_k \cos(k \theta ), \quad \mbox{and} \quad \gamma_j(\theta) \bydef 2 \sum_{k \geq 1 } (\gamma_j)_k \sin(  k \theta),
\end{equation}
where we abused notation by identifying the function $\gamma_i(\theta)$ with its corresponding sequence of Fourier coefficients $\gamma_i = ((\gamma_i)_k)_{k \ge 0}$. 
Denote $\gamma = (\gamma_i)_{i=1}^4$. Substituting the Fourier expansions from \eqref{eq:Fourier series} into the system \eqref{First Order System} and rearranging terms, we are led to define a map 
\begin{equation} \label{eq:map_periodic_sol}
\Gamma(\gamma ,\omega) = (\Gamma_i(\gamma ,\omega))_{i=1}^4
\end{equation}
whose roots correspond to periodic solutions. The Fourier coefficients of this map are defined component-wise by
\[
(\Gamma_i(\gamma,\omega))_k \bydef k (\gamma_i)_k + (-1)^{i+1} \omega \left( \Psi_i(\gamma, \alpha) \right)_k 
\]
for $i = 1, 2, 3, 4$ and $k \in \mathbb{N}$, where $\Psi_i(\gamma,\alpha)$ denotes the $i$th component of the nonlinearity in \eqref{First Order System}, now evaluated in Fourier space. In this formulation, all products of functions are replaced by discrete convolutions of their respective Fourier coefficients. Specifically, for sequences $a = (a_k)$ and $b = (b_k)$, the {\em cosine} convolution and convolution powers are defined by
\begin{equation} \label{convo_gamma}
	(a* b)_k = \sum_{\substack{k_1+k_2=k \\ k_1,k_2 \in \Z}} a_{|k_1|}  b_{|k_2|} \quad \mbox{and} \quad (a^n)_k = (\underbrace{a * a*\dots * a}_{n ~times})_k.
\end{equation}
If there exists a sequence $(\gamma^*,\omega^*)$ such that $\Gamma(\gamma^*,\omega^*)= 0$, then the corresponding Fourier series yields, via \eqref{eq:Fourier series}, a periodic orbit 
\[
\Gamma \bydef \left\{ \gamma^*(\theta) : \theta \in \left[ 0, 2 \pi \right] \right\}
\] 
of the original system~\eqref{rescale first order system}. To rigorously formulate this problem, we work in a weighted Banach space of Fourier coefficients. We define
\begin{equation} \label{eq:Banach_space_X_gamma}
X_\gamma = \{x = (x_k)_{k\in \N} : x_k \in \R , \| x \|_{X_{\gamma}}  \bydef |x_0| + 2\sum_{k > 0} \left| x_k \right| \nu_\gamma^k < \infty \},
\end{equation}
where $\nu_\gamma > 1$ is a fixed weight chosen a priori. In this setting, finding a periodic solution reduces to solving the zero-finding problem for $\Gamma$ in the product space $X_\gamma^4$. With this formulation in place, we next turn to the computation of the eigenvector bundle associated with the unstable direction.

%%%%%%%%%%%%%%%%%%%%%%%%%%%%%%%%%%%%%%%%%%%%%%%%%%%%%%%%%%%%%%%%%%%%%%%%%%%%%%%%%%%%%%%
%% Bundle
%%%%%%%%%%%%%%%%%%%%%%%%%%%%%%%%%%%%%%%%%%%%%%%%%%%%%%%%%%%%%%%%%%%%%%%%%%%%%%%%%%%%%%%
\subsubsection{Unstable bundle of the periodic solution} \label{sec:unstable_bundle_PO}

To derive the differential equation satisfied by the function $v(\theta)$, which corresponds to the first-order term in the Taylor expansion of the parameterization \eqref{Para diff equa_first_order}, we differentiate the invariance equation \eqref{Para diff equa} with respect to $\sigma$ and evaluate the result at $\sigma = 0$. This yields the linear variational equation
\[
v'(\theta) + \lambda v(\theta) = \omega D_u \Psi(\gamma(\theta),\alpha) v(\theta).
\]
where $D_u\Psi(\gamma(\theta),\alpha)$ denotes the Jacobian of $\Psi$ evaluated along the periodic orbit $\gamma$ at the parameter value $\alpha$. Thus, the eigenvector function $v$ associated with the unstable direction of the $2\pi$-periodic solution of system~\eqref{rescale first order system} satisfies the following linear system
\begin{equation} \label{First Order Bundle}
v'(\theta) + \lambda v(\theta)
=  
\omega
\begin{pmatrix}
 v_2(\theta)  \\ 
 \frac{\partial f_1 }{ \partial u_1} (\gamma_1(\theta),\gamma_3(\theta),\alpha) v_1(\theta) +  \frac{\partial f_1 }{ \partial u_3} (\gamma_1(\theta),\gamma_3(\theta),\alpha) v_3(\theta)  \\ 
 v_4(\theta)  \\
 \frac{\partial f_2}{ \partial u_1} (\gamma_1(\theta),\gamma_3(\theta),\alpha) v_1(\theta) + \frac{\partial f_2}{ \partial u_3} (\gamma_1(\theta),\gamma_3(\theta),\alpha) v_3(\theta)  
\end{pmatrix}.
%\begin{pmatrix}
%v'_1(\theta) \\ v'_2(\theta) \\ v'_3(\theta) \\ v'_4(\theta)
%\end{pmatrix}
%=  
%\begin{pmatrix}
%\omega v_2(\theta) - \lambda v_1(\theta) \\ 
%\omega \frac{\partial f_1 }{ \partial u_1} (\gamma_1(\theta),\gamma_3(\theta);\alpha) v_1(\theta) + \omega \frac{\partial f_1 }{ \partial u_3} (\gamma_1(\theta),\gamma_3(\theta);\alpha) v_3(\theta) - \lambda v_2(\theta) \\ 
%\omega v_4(\theta) - \lambda v_3(\theta) \\
%\omega \frac{\partial f_2}{ \partial u_1} (\gamma_1(\theta),\gamma_3(\theta);\alpha) v_1(\theta) + \omega \frac{\partial f_2}{ \partial u_3} (\gamma_1(\theta),\gamma_3(\theta);\alpha) v_3(\theta) - \lambda v_4(\theta)  
%\end{pmatrix}.
\end{equation}

For the remainder of this section, we assume that the unstable bundle associated with the periodic solution is orientable. Under this assumption, the corresponding eigenvectors are $2\pi$-periodic functions and can thus be represented by their Fourier series expansion
\begin{equation} \label{Fourier series Bundle}
v_j(\theta) \bydef \sum_{k \in \Z } (v_j)_k e^{\mi k \theta},
\end{equation}
where $\mi$ denotes the imaginary number. 
\begin{rem}
In the case where the bundle is non-orientable, the eigenvectors become $4\pi$-periodic. This is due to the fact that, after a $2\pi$ rotation, the solution returns to the same spatial position along the orbit, but the associated tangent vector reverses direction. In application, we wish to use the same zero-finding formulation for both the orientable and non-orientable cases. To achieve this, we rescale the periodic solution to be $\pi$-periodic, so that the corresponding eigenvectors become $2\pi$-periodic as in the orientable case. Under this rescaling, the coefficients of the periodic solution being solved in \eqref{eq:map_periodic_sol} correspond to an extended solution with non-minimal period $2\pi$, tracing the orbit twice. More details are given in Section \ref{subsec:Isolas}.
\end{rem}
 To define a zero-finding problem for the bundle, we will first rewrite the Cosine/Sine Fourier series of \eqref{eq:Fourier series} into a full two-sided exponential Fourier series by
\begin{equation} \label{periodic solution fourier series}
\gamma_j(\theta) \bydef  \sum_{k \in \Z} (\gamma_j^F)_{k} e^{\mi k \theta}, 
\end{equation}
where the coefficients are given by
\[
(\gamma_j^F)_{k} \bydef (\gamma_j)_{\left|k\right|}, \quad \mbox{ or } \quad 
(\gamma_j^F)_{k} \bydef 
\begin{cases}
0, & \text{for } k = 0, \\
- \mi \sign\left(k \right) (\gamma_j)_{\left|k \right|}, & \text{otherwise},
\end{cases} 
\]
if $j$ is odd or even respectively. Substituting the Fourier series expansions \eqref{Fourier series Bundle} and \eqref{periodic solution fourier series} into the first-order condition \eqref{First Order Bundle} leads to a functional equation whose zeros yield the Fourier coefficients of the unstable bundle. This equation defines a map
\[
V_i(\gamma, v,\lambda,\omega) = ((V_i(\gamma, v,\lambda,\omega))_k)_{k \in \Z}, \quad \text{with } i\in \{1,2,3,4\},
\]
where 
\begin{equation} \label{eq:system_bundle_periodic_sol}
\begin{pmatrix}
(V_1(\gamma, v,\lambda,\omega))_k \\ (V_2(\gamma, v,\lambda,\omega))_k \\ (V_3(\gamma, v,\lambda,\omega))_k \\ (V_4(\gamma, v,\lambda,\omega))_k 
\end{pmatrix}
\bydef 
\begin{pmatrix}
\left( \mi  k + \lambda \right)(v_1)_k - \omega (v_2)_k \\
\left( \mi  k + \lambda \right)(v_2)_k - \omega \left( \frac{\partial f_1 }{ \partial u_1} (\gamma_1,\gamma_3,\alpha) * v_1 \right)_k - \omega \left( \frac{\partial f_1 }{ \partial u_3} (\gamma_1,\gamma_3,\alpha) * v_3 \right)_k \\
\left( \mi  k + \lambda \right)(v_3)_k - \omega (v_4)_k \\
\left( \mi  k + \lambda \right)(v_4)_k - \omega \left( \frac{\partial f_2 }{ \partial u_1} (\gamma_1,\gamma_3,\alpha) * v_1 \right)_k - \omega \left( \frac{\partial f_2 }{ \partial u_3} (\gamma_1,\gamma_3,\alpha) * v_3 \right)_k  
\end{pmatrix}
\end{equation}
and where $*$ now denotes the {\em full} discrete convolution defined by
\begin{equation}\label{convo_v}
	(a* b)_k = \sum_{\substack{k_1+k_2=k \\ k_1,k_2 \in \Z}} a_{k_1}  b_{k_2}.% \quad \mbox{and} \quad (a^n)_k = \underbrace{(a * a*\dots * a)_k}_{n ~times}.
\end{equation}
We define the Banach space of Fourier coefficients as
\begin{equation} \label{eq:Banach_space_X_v}
X_v = \{ x = (x_k)_{k\in \Z} : x_k \in \C , \| x \|_{X_{v}}  \bydef \sum_{k\in \Z} \left| x_k \right| \nu_v^{|k|} < \infty \}
\end{equation}
for a weight $\nu_v>1$ fixed apriori. Since the system \eqref{First Order Bundle} is linear in $V$, any scalar multiple of a solution is also a solution. As a result, solutions are not isolated, and we must impose an additional condition to normalize the eigenvector. Fix $\bar{\ell} \in \{1,2,3,4\} $ and $ \rho \in \R\setminus\{0\}$, and define
\begin{equation}
V_0(v) \bydef \sum_{k \in \Z} (v_{\bar{\ell}})_k -  \rho. \label{normalize_eigenvector}
\end{equation} 
We then define the full nonlinear map as
\begin{equation} \label{eq:bundle_map}
V(\gamma, v,\lambda,\omega) =  (V_i(\gamma, v,\lambda,\omega))_{i=0}^4
\end{equation} 
whose root provide us with the wanted unstable bundle associated to the periodic orbit.  

%%%%%%%%%%%%%%%%%%%%%%%%%%%%%%%%%%%%%%%%%%%%%%%%%%%%%%%%%%%%%%%%%%%%%%%%%%%%%%%%%%%%%%%
% Local unstable manifold of periodic solution
%%%%%%%%%%%%%%%%%%%%%%%%%%%%%%%%%%%%%%%%%%%%%%%%%%%%%%%%%%%%%%%%%%%%%%%%%%%%%%%%%%%%%%%
\subsubsection{Higher order terms of the parameterization of \boldmath$W_{\rm loc}^u(\Gamma)$\unboldmath} \label{sec:high-order-term_WuGamma}

Having introduced the zero-finding problems for the periodic orbit $\gamma$ and the unstable bundle $(v,\lambda)$, given in \eqref{eq:map_periodic_sol} and \eqref{eq:bundle_map} respectively, we now turn to the construction of the map whose zeros correspond to a parameterization of the local unstable manifold $W_{\rm loc}^u(\Gamma)$. Recall that this parameterization $W(\theta,\sigma)$ satisfies the invariance equation \eqref{Para diff equa}. We represent $W$ as a Fourier-Taylor expansion of the form
\begin{equation}\label{Parameterization map}
	W(\theta,\sigma) = \sum_{n=(n_1,n_2) \in \Z \times \N} w_{n} e^{ i n_1 \theta  }\sigma^{n_2},\quad \mbox{with} \quad w_n = 
	\left(
		(w_1)_{n},
		(w_2)_{n},
		(w_3)_{n},
		(w_4)_{n} \right)
\end{equation}
where the zeroth and first-order Taylor coefficients are fixed by the periodic orbit and the eigenfunction
\[
W(\theta,0) = \gamma^F(\theta) \quad \mbox{ and } \quad \frac{\partial }{\partial \sigma } W(\theta,0) = v(\theta),
\]
which translates component-wise to
\[
(w_j)_{(n_1,0)} = (\gamma_j^F)_{n_1} \quad \mbox{ and } \quad (w_j)_{(n_1,1)} = (v_j)_{n_1}.
\]
Substituting the expansion \eqref{Parameterization map} into the invariance equation \eqref{Para diff equa}, we define the nonlinear map
\begin{equation} \label{eq:high_order_term_WuGamma} 
W(\gamma, v,\lambda,w,\omega) = (W_i(\gamma, v,\lambda,w,\omega))_{i=1}^4
\end{equation} 
whose root provide us with the coefficients of the parameterization of $W_{\rm loc}^u(\Gamma)$, and where each $W_i(\gamma, v,\lambda,w,\omega)$ is defined component-wise as
\[
(W_i(\gamma, v,\lambda,w,\omega))_{(n,m)} = 
	\begin{cases}
		(w_i)_{(n,0)} - (\gamma_i^F)_{n} & \text{for } m=0,\\
		(w_i)_{(n,1)} - (v_i)_{n} & \text{for } m=1,\\
		\left(  \mi  n + \lambda m \right)(w_i)_{(n,m)} - \omega (\Psi_i(w,\alpha))_{(n,m)} & \text{otherwise},
	\end{cases}
\]
and where the nonlinear terms $\Psi_i(w,\alpha)$ are defined by substituting the polynomial nonlinearities of \eqref{First Order System} with Fourier-Taylor convolutions, given by
\begin{equation}\label{convo_w}
	(a*b)_k = \sum_{\substack{k_1+k_2=k \\ k_1,k_2 \in \Z \times \N}} a_{k_1}  b_{k_2} \quad \mbox{and} \quad (a^n)_k = (\underbrace{a * a*\dots * a}_{n ~times})_k.
\end{equation} 
We define the Banach space of Fourier-Taylor coefficients by 
\begin{equation} \label{eq:Banach_space_X_w}
X_w = \{x = (x_k)_{ k \in \Z \times \N} : x_k \in \C , \| x \|_{X_{w}}  \bydef \sum_{ k \in \Z \times \N} \left| x_k \right| \nu_w^{|k_1| + k_2} < \infty \},
\end{equation}
for a fixed weight $\nu_w \ge 1$. At this stage, we note that one equation is still missing in order to solve for the frequency $\omega$. This missing condition will be introduced later, when constructing the connecting orbit between the invariant manifolds. We now proceed to define the map corresponding to the stable manifold of the equilibrium.

%%%%%%%%%%%%%%%%%%%%%%%%%%%%%%%%%%%%%%%%%%%%%%%%%%%%%%%%%%%%%%%%%%%%%%%%%%%%%%%%%%%%%%%
%% Stable manifold of steady-state
%%%%%%%%%%%%%%%%%%%%%%%%%%%%%%%%%%%%%%%%%%%%%%%%%%%%%%%%%%%%%%%%%%%%%%%%%%%%%%%%%%%%%%%

\subsection{Local stable manifold of the origin} \label{sec:stable_manifold_origin}
We now derive a zero-finding problem for a parameterization of the local stable manifold $W^s_{\rm loc}(0)$ at the origin using the parameterization method. This construction closely mirrors the approach used for computing the local unstable manifold of the periodic orbit, as described in Section~\ref{sec:unstable_manifold_PO}. Let $B_1(0) = \{ (\sigma_1,\sigma_2) \in \C^2: |\sigma_1|,|\sigma_2| \le 1\} \subset \C^2$. We seek a map
\begin{equation} \label{eq:general_map_stable_manifold}
P: B_1(0) \subset \C^2 \to \C^4:(\sigma_1,\sigma_2) \mapsto P(\sigma_1,\sigma_2)
\end{equation}
whose image is the local stable manifold $W^s_{\rm loc}(0)$. The map $P$ must satisfy the following {\em initial conditions}
\begin{equation} \label{init_con_stable}
P(0,0) = 0 \quad \mbox{ and } \quad \frac{\partial }{\partial \sigma_i } P(0,0) = \xi_i,
\end{equation}
and the {\em invariance equation}
\begin{equation}\label{Para diff equa stable}
	\lambda_1 \sigma_1 \frac{\partial }{\partial \sigma_1 } P(\sigma_1,\sigma_2)  + \lambda_2 \sigma_2  \frac{\partial }{\partial \sigma_2 }  P(\sigma_1,\sigma_2) = \Psi (P(\sigma_1,\sigma_2) ),
\end{equation}
where $(\lambda_i, \xi_i)$ for $i=1,2$ are the two stable eigenpairs of the linearized system at the origin, assumed to exist by Hypothesis~\ref{hyp:hyperbolic}. Since the eigenpairs $(\lambda_i, \xi_i)$ are not known in advance, we first solve for them by introducing a zero-finding problem. We define the map
\begin{equation} \label{eq:map_eigenpair} 
	E(\lambda_1, \xi_1, \lambda_2, \xi_2)  \bydef \begin{pmatrix}
		D_u\Psi(0,\alpha) \lambda_1 - \lambda_1 \xi_1 \\
		 (\xi_1)_{\bar{k}_1} - \rho_1 \\
		D_u\Psi(0,\alpha) \lambda_2 - \lambda_2 \xi_1 \\
		(\xi_2)_{\bar{k}_2} - \rho_2
	\end{pmatrix},
\end{equation}
where $D_u\Psi(0,\alpha)$ denotes the Jacobian matrix of the system \eqref{First Order System} evaluated at the origin and parameter value $\alpha$. To ensure uniqueness of the eigenvectors $\xi_1$, $\xi_2$, we fix one component $\bar{k}_i \in \{1,2,3,4\}$ of each and prescribe its value as $\rho_i$.

To solve the invariance equation \eqref{Para diff equa stable}, we expand $P$ as a Taylor series
\begin{equation} \label{Parameterization map stable}
P(\sigma_1,\sigma_2) = \sum_{n \in \N^2 } p_{n} \sigma_1^{n_1}\sigma_2^{n_2},\quad \mbox{with} \quad p_n = \left(
(p_1)_{n} , (p_2)_{n} , (p_3)_{n} , (p_4)_{n} \right).
\end{equation}
The initial conditions \eqref{init_con_stable} then become
\[
(p_j)_{(0,0)} = 0\quad \mbox{ and } \quad (p_j)_{(1,0)} = (\xi_1)_{j}\quad \mbox{ and } \quad (p_j)_{(0,1)} = (\xi_2)_{j}.
\]
For each $j=1,2,3,4$, denote $p_j = ((p_j)_{n})_{n \in \N}$ and denote $p=(p_1,p_2,p_3,p_4)$. Substituting \eqref{Parameterization map stable} in \eqref{Para diff equa stable}, we define a nonlinear map
\begin{equation} \label{eq:map_stable_manifold}
P(\lambda_1,\xi_1,\lambda_2,\xi_2,p) = (P_i(\lambda_1,\xi_1,\lambda_2,\xi_2,p))_{i=1}^4
\end{equation}
whose zeros correspond to the Taylor coefficients of the parameterization with components 
\[ 
(P_i(\lambda_1,\xi_1,\lambda_2,\xi_2,p))_{(n,m)} \bydef
	\begin{cases}
		(p_i)_{(0,0)}  & \text{for } n = m=0, \\
		(p_i)_{(1,0)} - (\xi_1)_{i} & \text{for } n=1 \text{ and } m=0,\\
		(p_i)_{(0,1)} - (\xi_2)_i & \text{for } n=0 \text{ and } m=1,\\
		\left( \lambda_1 n  + \lambda_2 m \right)(p_i)_{(n,m)} - (\Psi_i (p,\alpha))_{(n,m)} & \text{otherwise}.
	\end{cases}
\]
Here, the nonlinear terms $\Psi_i (p,\alpha)$ are defined by replacing the polynomial nonlinearities in \eqref{First Order System} with Cauchy products (discrete convolutions), given by
\begin{equation}\label{convo_p}
	(a* b)_k = \sum_{\substack{k_1+k_2=k \\ k_1,k_2 \in \N^2}} a_{k_1}  b_{k_2} \quad \mbox{and} \quad (a^n)_k = (\underbrace{a * a*\dots * a}_{n ~times})_k.
\end{equation}
We solve the equation $P(p)=0$ in the Banach space of Taylor coefficients
\begin{equation} \label{eq:Banach_space_X_p}
X_p = \{x = (x_k)_{k \in  \N^2 } : x_k \in \C , \| x \|_{X_{p}}  \bydef \sum_{k \in  \N^2} \left| x_k \right| \nu_w^{k_1 + k_2} < \infty \},
\end{equation}
for a fixed weight $\nu_w \ge 0$. Once a zero of the map \eqref{eq:map_stable_manifold} is found in $X_p$, the corresponding Taylor expansion \eqref{Parameterization map stable} defines the parameterization of the local stable manifold as
\[
W^s_{\rm loc}(0) = P(B_1(0)).
\]
Let us next define the zero-finding problem for the connections between $W^u_{\rm loc}(\Gamma)$ and $W^s_{\rm loc}(0)$.

%%%%%%%%%%%%%%%%%%%%%%%%%%%%%%%%%%%%%%%%%%%%%%%%%%%%%%%%%%%%%%%%%%%%%%%%%%%%%%%%%%%%%%%
%% Connecting Orbit
%%%%%%%%%%%%%%%%%%%%%%%%%%%%%%%%%%%%%%%%%%%%%%%%%%%%%%%%%%%%%%%%%%%%%%%%%%%%%%%%%%%%%%%

\subsection{Zero-finding problem for the heteroclinic connection}\label{sec:connectingorbit}

Having established the zero-finding problems corresponding to the parameterizations of the local unstable manifold of the periodic orbit (see Section~\ref{sec:unstable_manifold_PO}) and the local stable manifold of the equilibrium at the origin (see Section~\ref{sec:stable_manifold_origin}), we are now prepared to formulate the zero-finding problem that characterizes a heteroclinic connection between $W^u_{\rm loc}(\Gamma)$ and $W^s_{\rm loc}(0)$. The core idea is to express the heteroclinic connection as a solution to a boundary value problem, represented via a Chebyshev expansion in time. To this end, we first rescale and shift the time domain from the interval $[0,{L}_c]$ to $[-1,1]$. Under this change of variables, the system \eqref{First Order System} becomes
\begin{equation} \label{rescale_L}
u' =  \frac{L_c}{2} \Psi(u,\alpha),
\end{equation}
where $L_c>0$ is the unknown length of the connecting segment and must be determined as part of the solution. Our objective is to find an orbit of the rescaled system \eqref{rescale_L} that connects the unstable and stable manifolds over the finite time segment $[-1,1]$. This is formulated as a boundary value problem: the solution at time $t=-1$ is required to lie on $W^u_{\rm loc}(\Gamma)$, while the solution at $t=1$ must lie on $W^s_{\rm loc}(0)$. To evaluate the initial condition at $t=-1$, we refer to the parameterization defined in \eqref{Parameterization map}, and fix the Taylor parameter  at a given $\sigma=\sigma_0 < 1$. The angular variable $\theta$, which appears in the Definition~\ref{dfn:patternedfront} of a patterned front, is treated as an unknown. Since the entire branch of parameterized heteroclinic connections will later be represented using Chebyshev series, all variables must be expressed in terms of Chebyshev expansions, including the angular variable $\theta$. However, expressing $e^{\mi \theta}$ in this form presents analytical difficulties when $\theta$ itself is represented by a Chebyshev series. To avoid this complication, we introduce two auxiliary real variables, namely 
\[
\theta_1 = \Real ({e^{\mi  \theta}}) \quad \text{and} \quad \theta_2 = \Imag ({e^{\mi \theta}}),
\]
so that ${e^{\mi  \theta}} = \theta_1 +\mi \theta_2$. With this formulation, the pair $(\theta_1, \theta_2)$ must lie on the unit circle, a condition we enforce through the constraint equation $J_2$ given in \eqref{eq:angle_condition_map}. As a consequence, the identity
\begin{equation} \label{eq:identity_unit_circle}
(\theta_1 + \mi \theta_2)^{-n} = (\theta_1 - \mi \theta_2)^n
\end{equation}
holds for all $n \in \mathbb{N}$, allowing us to recast the dependence of $W$ on $\theta$ entirely in terms of $\theta_1$ and $\theta_2$:
\begin{align*}
	W(\theta, \sigma_0) &= \sum_{ \substack{n \in \mathbb{Z} \\ m \in  \mathbb{N}}} w_{n,m} e^{in\theta} \sigma_0^m
	= \sum_{ \substack{n \in \mathbb{Z} \\ m \in  \mathbb{N}}} w_{n,m} (\theta_1 +\mi \theta_2 )^n \sigma_0^m 
	=  \sum_{ \substack{n \in \mathbb{Z} \\ m \in  \mathbb{N}}} w_{n,m} (\theta_1 + \sign(n) \mi \theta_2 )^{|n|}  \sigma_0^m  \\
	& \bydef W_u^\Gamma(\theta_2,\theta_2, \sigma_0).
\end{align*}
To solve \eqref{rescale_L} on the time interval $[-1,1]$, we expand each component of the solution using Chebyshev polynomials $\{T_n(t)\}_{n \ge 0}$, defined recursively by
$T_0(t)=1$, $T_1(t)=t$ and $T_{n+1}(t)=2tT_n(t)-T_{n-1}(t)$, and which satisfy the identity $T_n(t) = \cos(n \arccos(t))$ for all $t\in [-1,1]$, making them effectively cosine polynomials. Each component $u_i(t)$ of the solution is then represented as
\begin{equation} \label{eq:Chebyshev_series_BVP}
u_i(t) = (a_i)_0 + 2 \sum_{ n \geq 1} (a_i)_n T_n(t)
\end{equation}
where for each $i \in \{1,2,3,4\}$, the sequence $a_i \bydef ((a_i)_n)_{n \ge 0}$ denotes the sequence of Chebyshev coefficients of the function $u_i(t)$. Let $a \bydef (a_1,a_2,a_3,a_4)$ denote the collection of all four component series. Following a similar approach as the ones in \cite{MR3148084,MR4292534}, substituting the Chebyshev expansions \eqref{eq:Chebyshev_series_BVP} into the system \eqref{rescale_L} leads to the definition of a nonlinear map 
\begin{equation} \label{eq:map_projected_BVP}
G(w,L_c,\theta_1,\theta_2,a) \bydef \left( G_i(w,L_c,\theta_1,\theta_2,a) \right)_{i=1}^4
\end{equation}
whose zeros determine the Chebyshev coefficients of the solution to the initial value problem with initial value lying on $W^u_{\rm loc}(\Gamma)$. Each component of this map is defined by
\[
(G_i(w,L_c,\theta_1,\theta_2,a))_n \bydef 
\begin{cases}
\displaystyle \left( W_u^\Gamma(\theta_1,\theta_2,\sigma_0) \right)_i - (a_i)_0 - 2 \sum_{n \geq 1 } (a_i)_n (-1)^n, & \text{for } n = 0, \\
\displaystyle 2n(a_i)_n + \frac{L_c}{2} (\Psi_i(a,\alpha))_{n+1} - \frac{{L}_c}{2} (\Psi_i(a,\alpha))_{n-1}, & \text{for } n \geq 1,
\end{cases}
\]
where the nonlinear terms $\Psi_i(a,\alpha)$ are obtained by replacing the polynomial nonlinearities of the system \eqref{First Order System} with cosine-type discrete convolutions, defined as
\[
(a* b)_k = \sum_{\substack{k_1+k_2=k \\ k_1,k_2 \in \Z}} a_{|k_1|}  b_{|k_2|}.
\]
\begin{rem}\label{rmk:phase_orbit}
The size of $W_{\rm loc}^u(\Gamma)$ is fixed by the normalization constant $ \rho $, introduced in \eqref{normalize_eigenvector}. By also fixing the Taylor parameter $\sigma_0$, we define a circular level set of the manifold at a fixed distance from the orbit, parameterized by the angular variable $\theta$, which is represented by the auxiliary variables $\theta_1$ and $\theta_2$. Since the initial condition in \eqref{eq:map_projected_BVP} is constrained to lie on this level set and $\theta_1$, $\theta_2$ are solved for as part of the zero-finding problem, the phase is well-defined locally and can be traced consistently along the loop of patterned fronts.
\end{rem}

This formulation leverages the cosine structure inherent in the Chebyshev polynomials, which behave like Fourier modes. To solve the zero-finding problem $G(a,L_c,\theta_1,\theta_2)=0$, we equip each component of $a=(a_1,\dots,a_4)$ with a suitable Banach space structure. Specifically, we define the Banach space of Chebyshev coefficients as
\begin{equation} \label{eq:Banach_space_X_a}
X_a = \{x = (x_k)_{k \in  \N } : x_k \in \R , \| x \|_{X_{a}}  \bydef |x_0| + 2 \sum_{k > 1} \left| x_k \right| \nu_a^{k} < \infty \},
\end{equation}
where $\nu_a \ge 1$ is a fixed weight parameter controlling the decay rate of the coefficients.

Although a zero of the map $G$, defined in \eqref{eq:map_projected_BVP}, guarantees that the corresponding orbit originates on the local unstable manifold $W^u_{\rm loc}(\Gamma)$, as enforced by the initial condition at $t=-1$, it remains necessary to ensure that this orbit terminates on the local stable manifold $W^s_{\rm loc}(0)$. To that end, we impose a boundary condition at the terminal point $t=1$, which involves the parameterization $P$ of $W^s_{\rm loc}(0)$. However, as introduced in general in \eqref{eq:general_map_stable_manifold} and expressed as a multivariate Taylor expansion in \eqref{Parameterization map stable}, the parameterization $P$ is naturally defined over the complex domain. This reflects the presence of complex-conjugate stable eigenvalues in the linearized system at the origin, which induces complex-conjugate coefficients in the expansion of $P$. To reformulate the problem in terms of real variables, we exploit this symmetry and rewrite the parameterization using the identity
\[
P( \sigma_1 +\mi  \sigma_2 , \sigma_1 -\mi \sigma_2) = \sum_{n \in \N^2} p_{n} (\sigma_1 +\mi  \sigma_2)^{n_1} (\sigma_1 -\mi \sigma_2)^{n_2},
\]
which allows us to represent the stable manifold using real coordinates while preserving the structure imposed by the complex eigenstructure.

To enforce that the computed orbit connects to the local stable manifold at the final time $t=1$, we exploit a classical identity satisfied by Chebyshev polynomials: $T_n(1)=1$ for all $n \ge 0$> Using this we evaluate the Chebyshev expansion of each component $u_i(t)$ at $t=1$, yielding $u_i(1) = (a_i)_0 + 2 \sum_{ n \geq 1} (a_i)_n$. This observation allows us to formulate the endpoint condition requiring the orbit to lie on $W^s_{\rm loc}(0)$ at $t=1$. Specifically, we define the map
\begin{equation} \label{eq:map_endpoint_Ws}
%H(\sigma_1,\sigma_2,a,p) = \left( H_i(\sigma_1,\sigma_2,a_i,p_i) \right)_{i=1}^4,
H(\sigma_1,\sigma_2,a,p) = \left( P_i( \sigma_1 +\mi  \sigma_2 , \sigma_1 -\mi \sigma_2)  - (a_i)_0 - 2 \sum_{n \geq 1 } (a_i)_n \right)_{i=1}^4,
\end{equation}
%
%with each component given by
%
%\[
%H_i(\sigma_1,\sigma_2,a_i,p_i) \bydef  P_i( \sigma_1 +\mi  \sigma_2 , \sigma_1 -\mi \sigma_2)  - (a_i)_0 - 2 \sum_{n \geq 1 } (a_i)_n
%\]
%
so that the condition $H(\sigma_1,\sigma_2,a,p) =0$ enforces that the orbit connects to $W^s_{\rm loc}(0)$ at $t=1$. 
However, because the segment length $L_c$ is itself an unknown, the boundary condition $H(\sigma_1,\sigma_2,a,p)=0$ alone does not specify a unique solution. In fact, there typically exists a continuum of values for the pair $(\sigma_1, \sigma_2)$ that satisfy the condition. To restore uniqueness, we introduce the scalar constraints %$J=(J_1,J_2)=0$, where
\begin{equation} \label{eq:angle_condition_map}
J(\sigma_1,\sigma_2,\theta_1,\theta_2) = 
\begin{pmatrix} J_1(\sigma_1,\sigma_2) \\ J_2(\sigma_1,\sigma_2) \end{pmatrix}
\bydef
\begin{pmatrix}	
0.95 - \sigma_1^2-\sigma_2^2 \\ 
1 - \theta_1^2-\theta_2^2 \end{pmatrix}.
\end{equation}
The first constraint $J_1(\sigma_1,\sigma_2)=0$ bounds $(\sigma_1, \sigma_2)$ to remain within the radius of convergence of the stable manifold’s power series expansion. This can be ensured, for instance, by selecting a sufficiently small normalization for the eigenvectors $\xi_1,\xi_2$ (see, e.g., \cite{MR3437754}). The second constraint $J_2(\sigma_1,\sigma_2)=0$ guarantees that the pair $(\theta_1, \theta_2)$ lies on the unit circle, enforcing the identity \eqref{eq:identity_unit_circle}. 

We are now in a position to formulate the zero-finding problem that characterizes the heteroclinic connection. A locally unique zero of this map will provide a constructive proof of existence for a patterned front. To this end, we define the variable
\begin{equation} \label{eq:variables_in_y}
y \bydef \left( \gamma , \lambda, v , w, L_c, \theta_1, \theta_2 , \sigma_1, \sigma_2 , \omega, a  , \lambda_1, \xi_1 , \lambda_2, \xi_2 , p \right)
\end{equation}
whose components are given as follows:
\begin{itemize}
\item $\gamma = (\gamma_1,\dots,\gamma_4)$: Fourier coefficients of the periodic orbit $\Gamma$;
\vspace{-.2cm}
\item $\lambda$: the unstable Floquet exponent associated with $\Gamma$;
\vspace{-.2cm}
\item $v = (v_1,\dots,v_4)$: Fourier coefficients defining the unstable Floquet bundle of $\Gamma$;
\vspace{-.2cm}
\item $w = (w_1,\dots,w_4)$: Fourier-Taylor coefficients of the parameterization of $W_{\rm loc}^u(\Gamma)$;
\vspace{-.2cm}
\item $L_c$: flight time for the orbit segment connecting $W_{\rm loc}^u(\Gamma)$ to $W_{\rm loc}^s(0)$;
\vspace{-.2cm}
\item $(\theta_1,\theta_2)$: real auxiliary variables encoding the angular parameter $\theta$ via ${e^{\mi  \theta}} = \theta_1 +\mi \theta_2$;
\vspace{-.2cm}
\item $(\sigma_1,\sigma_2)$: coordinates in the parameterization domain of the stable manifold $W_{\rm loc}^s(0)$;
\vspace{-.2cm}
\item $\omega$: frequency of the periodic orbit $\Gamma$;
\vspace{-.2cm}
\item $a = (a_1,\dots,a_4)$: Chebyshev coefficients of the orbit segment defined over $[-1,1]$;
\vspace{-.2cm}
\item $(\lambda_1,\lambda_2)$: stable eigenvalues of the Jacobian $D_u \Psi(0,\alpha)$;
\vspace{-.2cm}
\item $(\xi_1,\xi_2)$: the eigenvectors associated to $\lambda_1$ and $\lambda_2)$;
\vspace{-.2cm}
\item $p = (p_1,\dots,p_4)$: Taylor coefficients of the parameterization of $W_{\rm loc}^s(0)$.
\end{itemize}
We now recall the collection of maps involved in the formulation of the zero-finding problem:
\begin{itemize}
\item $ \Gamma = \Gamma(\gamma ,\omega)$,  corresponding to the periodic solution, as defined in~\eqref{eq:map_periodic_sol};
\vspace{-.2cm}
\item $ V = V(\gamma, v,\lambda,\omega)$, corresponding to the unstable Floquet bundle in~\eqref{eq:bundle_map};
\vspace{-.2cm}
\item $ W = W(\gamma, v,\lambda,w,\omega)$, representing the parameterization of $ W_{\rm loc}^u(\Gamma) $, see~\eqref{eq:high_order_term_WuGamma};
\vspace{-.2cm}
\item $ G = G(w,L_c,\theta_1,\theta_2,a)$, governing the orbit segment via Chebyshev expansion, see~\eqref{eq:map_projected_BVP};
\vspace{-.2cm}
\item $ H = H(\sigma_1,\sigma_2,a,p)$, enforcing the endpoint of the orbit to lie on $ W_{\rm loc}^s(0) $, see~\eqref{eq:map_endpoint_Ws};
\vspace{-.2cm}
\item $ J = J(\sigma_1,\sigma_2,\theta_1,\theta_2) $, providing normalization conditions for the stable and angular variables, defined in~\eqref{eq:angle_condition_map};
\vspace{-.2cm}
\item $ E = E(\lambda_1, \xi_1, \lambda_2, \xi_2) $, solving for the stable eigenpairs, given in~\eqref{eq:map_eigenpair};
\vspace{-.2cm}
\item $ P = P(\lambda_1,\xi_1,\lambda_2,\xi_2,p)$, epresenting the parameterization of $ W_{\rm loc}^s(0) $, as defined in~\eqref{eq:map_stable_manifold}.
\end{itemize}
Together, these maps define the overall system  
\begin{equation} \label{eq:zero_finding_f}
f(\alpha,y) \bydef  
\begin{pmatrix}	
\Gamma(\gamma ,\omega) \\
V(\gamma, v,\lambda,\omega) \\
W(\gamma, v,\lambda,w,\omega) \\ 
H(\sigma_1,\sigma_2,a,p) \\
J(\sigma_1,\sigma_2,\theta_1,\theta_2) \\ 
G(w,L_c,\theta_1,\theta_2,a) \\
E(\lambda_1,\xi_1,\lambda_2,\xi_2) \\
P(\lambda_1,\xi_1,\lambda_2,\xi_2,p)
\end{pmatrix},
\end{equation}
whose zeros correspond to heteroclinic connections between the periodic orbit $ \Gamma $ and the equilibrium at the origin, at the fixed parameter value $ \alpha $. Note that the dependence on $\alpha$ is implicit in all maps involving the vector field $ \Psi(\cdot,\alpha) $, including $\Gamma$, $ V $, $ W $, $ G $, $ E $, and $ P $.

%%%%%%%%%%%%%%%%%%%%%%%%%%%%%%%%%%%%%%%%%%%%%%%%%%%%%%%%%%%%%%%%%%%%%%%%%%%%%%%%%%%%%%%
%% Pseudo-arclength
%%%%%%%%%%%%%%%%%%%%%%%%%%%%%%%%%%%%%%%%%%%%%%%%%%%%%%%%%%%%%%%%%%%%%%%%%%%%%%%%%%%%%%%

\subsection{Pseudo-arclength continuation of branches of heteroclinic connections}

Having defined in equation~\eqref{eq:zero_finding_f} a nonlinear map $f$ whose zeros correspond to the desired heteroclinic connections, we now aim to compute one-dimensional families (branches) of such solutions by means of a continuation method. A direct continuation in the parameter $\alpha$ may be problematic, since the branch of solutions is expected to undergo fold (saddle-node) bifurcations, where the Fréchet derivative $D_y f$ of $f$ may become singular or ill-conditioned. To overcome this difficulty, we adopt a pseudo-arclength continuation strategy, in which the parameter $\alpha$ is treated as an additional unknown and the solution curve is followed in a desingularized coordinate system.
We introduce the extended variable
\[
x \bydef (\alpha,y)
\]
which includes both the system parameter $\alpha$ and the heteroclinic connection variables $y$ defined in~\eqref{eq:zero_finding_f}. Let $\tilde{x} = (\tilde{\alpha}, \tilde{y})$ be a known numerical approximation of a solution to $f(y,\alpha) = 0$, and let $\eta$ be a normalized approximation of the tangent vector to the solution branch at $\tilde{x}$. We define a scalar function
\[
Q(x) \bydef (x - \tilde{x} ) \cdot \eta
\]
which measures the signed arclength displacement from the base point $\tilde{x}$ along the direction $\eta$. With this setup, we define the pseudo-arclength continuation problem as the following extended zero-finding map:
\begin{equation}\label{eq:zero_finding_F}
	F(x) \bydef  \begin{pmatrix}
		Q(x) \\
		f(x)
	\end{pmatrix}.
\end{equation}
By construction, any root $x^* = (\alpha^*, y^*)$ of $F$ satisfies both the original system $f(y^*,\alpha^*) = 0$ and the orthogonality constraint $Q(x^*) = 0$, which ensures that the new solution lies on the hyperplane passing through $\tilde{x}$ in the direction $\eta$. This desingularization allows us to compute solution branches through folds and other singular points in a robust and numerically stable way.

Recall the definition of the Banach spaces $X_\gamma$, $X_v$, $X_w$, $X_p$ and $X_a$ given by \eqref{eq:Banach_space_X_gamma}, \eqref{eq:Banach_space_X_v}, \eqref{eq:Banach_space_X_w}, \eqref{eq:Banach_space_X_p} and \eqref{eq:Banach_space_X_a}, respectively. We aim to solve the desingularized zero-finding problem $F(x)=0$ on the product Banach space
\begin{equation} \label{eq:Banach_Space_X}
X \bydef \mathbb{K}\times X_\gamma^4 \times \mathbb{K}\times X_v^4 \times X_w^4 \times \mathbb{K}^6 \times X_a^4 \times \mathbb{K}^{10} \times X_p^4 
\end{equation}
where $\mathbb{K} \in \{ \R, \C \}$ and each subspace corresponds to the appropriate spaces introduced in Section~\ref{sec:connectingorbit}. For notational simplicity, we collect all components into a single vector
$x = (x_1,\dots, x_{29})$,
where each $x_i$ corresponds to one of the variables in the set $(\alpha,y)$ with the variables in $y$ are listed in \eqref{eq:variables_in_y}. We use the shorthand
\[
X = X_1\times...\times X_{29}
\]
to denote the full product space and define the norm on the product space $X$ as 
\[
\| x \|_X = \max_{i = 1,...,29} \mu_i \| x_i \|_{X_i}
\]
where the $\mu_i > 0$ are numerical weights introduced to improve the sharpness of our estimates. Further details on their computation can be found in Section~\ref{sec:bounds}. Table~\ref{table_variables} provides the correspondence between the indices $i$, the variables $x_i$, and the respective function spaces $X_i$. 
\begin{table}[H]
	\centering
	{\footnotesize
	\begin{tabular}{||c|c|c||c|c|c||c|c|c||ccc}
		\hline
		\rowcolor[HTML]{C0C0C0} 
		Index & Var        & Space              & Index    & Var        & Space              & Index    & Var        & Space             & \multicolumn{1}{c|}{\cellcolor[HTML]{C0C0C0}Index} & \multicolumn{1}{c|}{\cellcolor[HTML]{C0C0C0}Var} & \multicolumn{1}{c||}{\cellcolor[HTML]{C0C0C0}Space} \\ \hline
		$x_1$ & $\alpha$   & $\mathbb{K}$       & $x_9$    & $v_3$      & ${X}_{v}$  & $x_{17}$ & $\theta_2$ & $\mathbb{K} $     & \multicolumn{1}{c|}{$x_{25}$}                      & \multicolumn{1}{c|}{$\xi$}                       & \multicolumn{1}{c||}{$\mathbb{K}^{10} $}            \\ \hline
		$x_2$ & $\gamma_1$ & ${X}_{\gamma}$ & $x_{10}$ & $v_4$      & ${X}_{v}$  & $x_{18}$ & $\sigma_1$ & $\mathbb{K} $     & \multicolumn{1}{c|}{$x_{26}$}                      & \multicolumn{1}{c|}{$p_1$}                       & \multicolumn{1}{c||}{${X}_{p}$}            \\ \hline
		$x_3$ & $\gamma_2$ & ${X}_{\gamma}$ & $x_{11}$ & $w_1$      & ${X}_{w}$ & $x_{19}$ & $\sigma_2$ & $\mathbb{K} $     & \multicolumn{1}{c|}{$x_{27}$}                      & \multicolumn{1}{c|}{$p_2$}                       & \multicolumn{1}{c||}{${X}_{p}$}            \\ \hline
		$x_4$ & $\gamma_3$ & ${X}_{\gamma}$ & $x_{12}$ & $w_2$      & ${X}_{w}$ & $x_{20}$ & $\omega$   & $\mathbb{K} $     & \multicolumn{1}{c|}{$x_{28}$}                      & \multicolumn{1}{c|}{$p_3$}                       & \multicolumn{1}{c||}{${X}_{p}$}            \\ \hline
		$x_5$ & $\gamma_4$ & ${X}_{\gamma}$ & $x_{13}$ & $w_3$      & ${X}_{w}$ & $x_{21}$ & $a_1$      & ${X}_{a}$ & \multicolumn{1}{c|}{$x_{29}$}                      & \multicolumn{1}{c|}{$p_4$}                       & \multicolumn{1}{c||}{${X}_{p}$}            \\ \hline
		$x_6$ & $\lambda$  & $\mathbb{K}$       & $x_{14}$ & $w_4$      & ${X}_{w}$ & $x_{22}$ & $a_2$      & ${X}_{a}$ &                                                    &                                                  &                                                    \\ \cline{1-9}
		$x_7$ & $v_1$      & ${X}_{v}$  & $x_{15}$ & ${L}_c$        & $\mathbb{K} $      & $x_{23}$ & $a_3$      & ${X}_{a}$ &                                                    &                                                  &                                                    \\ \cline{1-9}
		$x_8$ & $v_2$      & ${X}_{v}$  & $x_{16}$ & $\theta_1$ & $\mathbb{K} $      & $x_{24}$ & $a_4$      & ${X}_{a}$ &                                                    &                                                  &                                                    \\ \cline{1-9}
	\end{tabular}
	}
	\caption{{\small Table of variables and associated Banach space.}}\label{table_variables}
\end{table}

Note that in Table~\ref{table_variables}, the component $x_{25}=\xi = (\lambda_1, \xi_1, \lambda_2, \xi_2) \in \mathbb{K}^{10}$ corresponds to the stable eigenvalues and eigenvectors of the linearization $D_u \Psi(0, \alpha)$.

Having defined in equation~\eqref{eq:zero_finding_F} a desingularized zero-finding problem whose solutions yield heteroclinic orbits for a fixed value of the parameter $\alpha$, our goal is now to extend this to a continuous family of such solutions over an interval of $\alpha$ values.

Assume we can construct two continuous maps
\begin{equation} \label{eq:families_solutions_tangent}
\bx : [-1,1] \rightarrow {X} \quad \mbox{and} \quad {\eta}_s: [-1,1] \rightarrow {X}
\end{equation}
which represent approximate solutions and their corresponding tangent directions. For each $s \in [-1, 1]$, we define the affine constraint
\[
Q_s(x,s) \bydef \left( x -\bx(s) \right) \cdot \eta_s(s)
\]
which defines a hyperplane in $X$ intersecting the expected solution curve. We then formulate the extended zero-finding problem:
\begin{equation} \label{eq:ZFP}
	F(x , s) \bydef  \begin{pmatrix}
		Q_s(x , s) \\
		f(x)
	\end{pmatrix}
\end{equation} 
where $x = (\alpha, y)$ and $f(x)$ is as defined in equation~\eqref{eq:zero_finding_f}. By solving $F(x,s) = 0$ for $s \in [-1, 1]$, we obtain a branch of heteroclinic solutions as the parameter $\alpha$ varies. This formulation sets the stage for applying a rigorous pseudo-arclength continuation method, which we formalize next.

%=========================================================================
% NEWTON-LIKE MAP AND BOUNDS
%=========================================================================	
%!TEX root = main.tex

\section{Rigorous validation of branches of heteroclinic connections} \label{sec:bounds}

In this section, we present the rigorous validation framework used to certify the existence of a continuous family of heteroclinic orbits. Building on the zero-finding formulation \eqref{eq:ZFP}, our approach applies a Newton–Kantorovich-type argument (introduced in Section~\ref{sec:newtonkantorovich}) to rigorously establish the existence of solutions near numerically computed approximations. The core idea is to construct a Newton-like operator $T$ on a suitable Banach space, whose fixed points correspond to true solutions of the problem. Given an approximation $\bx(s)$ at a parameter value $s \in [-1,1]$, we prove that $T$ admits a unique fixed point nearby. To make the problem computationally tractable, we project the infinite-dimensional setting onto finite-dimensional subspaces, as introduced in Section~\ref{sec:explicit_bounds}, derive operator bounds to control nonlinearities and ensure contraction, and use Chebyshev interpolation to represent the solution branch in the parameter $s$. In Section~\ref{sec:data_representation}, we detail how to use the Fast Fourier Transform (FFT) for efficient representation of these parameterized families $\{ \bx(s): s \in [-1,1]\}$. Finally, in Section~\ref{sec:explicit_bounds} we present all the estimates required to verify the assumptions of the Newton–Kantorovich theorem (Theorem~\ref{thm:radii_polynomial}).

\subsection{Newton-Kantorovich theorem}\label{sec:newtonkantorovich}

We begin this section by recalling the parameterized zero-finding map $F(\cdot, s)$ defined in  \eqref{eq:ZFP}, and introduce the associated fixed-point operator $T : X \times [-1,1] \to X$ given by
\begin{equation}\label{fixed point operator}
	T(x,s) \bydef x - A(s)F(x,s)
\end{equation}
where $A(s)$ is an injective linear operator that serves as an approximate inverse of the Fréchet derivative $DF(\bx(s),s)$ of $F$ at a numerical approximation $\bx(s)$ and that satisfies $A(s)F(\cdot,s) : X \to X$, for all $s \in [-1,1]$. Note that $DF$ denotes the Fréchet derivative with respect to the $x$ variable. 
The idea behind this construction is to reformulate the problem of solving $F(x, s) = 0$ as a fixed-point problem: if $x^*(s) \in X$ satisfies $T(x^*(s), s) = x^*(s)$, then $x^*(s)$ is necessarily a zero of $F(\cdot, s)$, and hence a validated solution of our original problem. The goal is to establish that, for each $s \in [-1,1]$, the operator $T(\cdot, s)$ admits a unique fixed point in a ball of radius $r > 0$ centered at a numerically computed approximation $\bx(s) \in X$. The Newton–Kantorovich-type theorem stated below will provide the conditions under which such a fixed point exists and is unique, ensuring the rigorous validation of the heteroclinic connection at each parameter value $s$.

Before stating the Newton-Kantorovich Theorem, we introduce some notation. Let $B(X,Y)$ denote the space of bounded linear operators from a Banach space $X$ to another Banach space $Y$, and write $B(X)\bydef B(X,X)$. For $x \in X$ and $r>0$, denote the closed ball of radius $r$ centered at $x$ 
by $\overbar{B_r(x)} \bydef \{ y \in X : \|x-y\|_X \le r \} \subset X$.

\begin{thm}[{\bf Newton-Kantorovich Theorem}]\label{thm:radii_polynomial}
Let $X,Y$ be Banach spaces and consider $F:X \times [-1,1] \to Y$ be a $C^k$ Fréchet diﬀerentiable mapping. Suppose that $\bx(s) \in X$,  $A(s) \in B(Y,X)$ is injective and $A^\dagger(s) \in B(X,Y) $ for any $s\in[-1,1]$. Moreover the existence of positive bounds $Y_0, Z_0,Z_1$ and $Z_2$ satisfying for all $s \in [-1,1]$
\begin{align}
	\| A(s) F(\bx(s), s) \|_X &\leq Y_0 \label{bound_Y_0}, \\
	\| I - A(s) A^\dagger(s) \|_{B(X)} &\leq Z_0 \label{bound_Z_0}, \\
	\| A(s)[DF(\bx(s),s) - A^\dagger(s)] \|_{B(X)} &\leq Z_1 \label{bound_Z_1} , \\
	\| A(s) [ DF(c,s) - DF( \bx(s) ,s) ] \|_{B(X)} &\leq Z_2(r)r \label{bound_Z_2},	
\end{align}
for all $c \in \overbar{B_r(\bx(s))}$ and $r>0$. Define the {\em radii polynomial}
	\begin{align} \label{radii_poly}
		p(r) \bydef Z_2(r)r^2 - (1 - Z_0 - Z_1) + Y_0.
	\end{align}
	If there exists $r_0 > 0 $ such that $p(r_0) < 0$, then there exists a function 
	\begin{align*}
		\tilde{x}:[-1,1] \rightarrow \bigcup_{s\in [-1,1]} \overbar{B_{r_0}(\bx (s))}
	\end{align*}
	with $\tilde{x} \in C^k$ and such that 
	\begin{align*}
		F(\tilde{x}(s) , s ) = 0,  \mbox{ for all } s\in [-1,1].
	\end{align*}
	Furthermore, these solutions are unique.
\end{thm}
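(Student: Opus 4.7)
My plan is to apply the Banach fixed-point theorem to the Newton-like operator $T(\cdot, s)$ defined in~\eqref{fixed point operator}, on the closed ball $\overbar{B_{r_0}(\bx(s))}$, with a contraction constant that is uniform in $s \in [-1,1]$. For each fixed $s$, I will verify (i) that $T(\cdot, s)$ maps $\overbar{B_{r_0}(\bx(s))}$ into itself, and (ii) that $T(\cdot, s)$ is a contraction on this ball. The resulting unique fixed point $\tilde{x}(s)$ satisfies $A(s) F(\tilde{x}(s), s) = 0$, whence the injectivity hypothesis on $A(s)$ forces $F(\tilde{x}(s), s) = 0$, which is the sought statement.

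\textbf{Key estimate and the role of $p(r_0) < 0$.}
The heart of the argument is a uniform bound on $DT(c, s) = I - A(s) DF(c, s)$ for $c \in \overbar{B_{r_0}(\bx(s))}$. Adding and subtracting $A(s) A^\dagger(s)$ and $A(s) DF(\bx(s), s)$, I decompose
\[
I - A(s) DF(c, s) = \bigl[I - A(s) A^\dagger(s)\bigr] + A(s)\bigl[A^\dagger(s) - DF(\bx(s), s)\bigr] + A(s)\bigl[DF(\bx(s), s) - DF(c, s)\bigr],
\]
whose three terms have operator norms bounded by $Z_0$, $Z_1$, and $Z_2(r_0) r_0$ respectively via~\eqref{bound_Z_0}--\eqref{bound_Z_2}. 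A mean-value inequality along the convex segment from $\bx(s)$ to $x$ then yields $\|T(x, s) - T(\bx(s), s)\|_X \le (Z_0 + Z_1 + Z_2(r_0) r_0) \|x - \bx(s)\|_X$; combining this with the bound $\|T(\bx(s), s) - \bx(s)\|_X = \|A(s) F(\bx(s), s)\|_X \le Y_0$ gives
\[
\|T(x, s) - \bx(s)\|_X \le (Z_0 + Z_1 + Z_2(r_0) r_0)\, r_0 + Y_0.
\]
Rearranging, the right-hand side is strictly less than $r_0$ precisely when the radii polynomial satisfies $p(r_0) < 0$; the same hypothesis simultaneously delivers the uniform contraction constant $Z_0 + Z_1 + Z_2(r_0) r_0 < 1 - Y_0/r_0 \le 1$. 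Banach's fixed-point theorem, applied for each $s$, then produces a unique $\tilde{x}(s) \in \overbar{B_{r_0}(\bx(s))}$ fixed by $T(\cdot, s)$, and by injectivity of $A(s)$ we conclude $F(\tilde{x}(s), s) = 0$.

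\textbf{Regularity and main obstacle.}
For the $C^k$ regularity of $s \mapsto \tilde{x}(s)$, I would invoke the parameter-dependent uniform contraction principle (equivalently, a Banach-space implicit function theorem applied to $G(x, s) = x - T(x, s)$): since $\|D_x T(\tilde{x}(s), s)\|_{B(X)} < 1$ uniformly, the operator $A(s) DF(\tilde{x}(s), s) = I - D_x T(\tilde{x}(s), s)$ is boundedly invertible on $X$, and joint $C^k$ regularity of $F$ in $(x, s)$ then transfers to the implicit solution. The main obstacle in this step is that the regularity of $\tilde{x}$ is only as good as the joint regularity of the data $s \mapsto (\bx(s), A(s))$: in the rigorous computational implementation these are represented as Chebyshev expansions in $s$ (cf.\ Section~\ref{sec:data_representation}), which is precisely what makes the $C^k$ hypothesis natural and workable. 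Uniqueness of $\tilde{x}(s)$ on the entire ball $\overbar{B_{r_0}(\bx(s))}$ is automatic from the strict contraction.
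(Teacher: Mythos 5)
Your proposal is correct and follows exactly the route the paper itself indicates: the paper does not spell out a proof but simply cites the Uniform Contraction Principle, and your argument (uniform contraction of $T(\cdot,s)$ on $\overbar{B_{r_0}(\bx(s))}$ via the $Y_0,Z_0,Z_1,Z_2$ bounds, injectivity of $A(s)$ to pass from fixed points to zeros, and parameter-dependent contraction for $C^k$ regularity) is precisely that standard proof. Note only that your rearrangement implicitly uses the standard radii polynomial $Z_2(r)r^2-(1-Z_0-Z_1)r+Y_0$, i.e.\ it corrects an apparent typo in \eqref{radii_poly}, where the factor $r$ is missing from the linear term.
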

The proof of the theorem is a direct consequence of the Uniform Contraction Principle (e.g. see \cite{Chow1982}), and we refer the reader to \cite{Breden2015,Berg2010} for full details. 

\begin{rem}[{\bf Transversality Condition}] \label{rem:transversality}
A key advantage of a Newton–Kantorovich-type theorem such as Theorem~\ref{thm:radii_polynomial} is that it provides a uniform non-degeneracy condition for the family of solutions $\{\tilde{x}(s):s\in[-1,1]\}$. Specifically, it ensures that the Fréchet derivative $D_xF(\tilde{x}(s),s)$ is invertible for all $s \in [-1,1]$. This property can be used to verify the transversality of the intersection between $W^u(\Gamma)$, the unstable manifold of $\Gamma$, and $W^s(0)$, the stable manifold of the origin. As a consequence, the heteroclinic connections established in Theorem~\ref{thm:radii_polynomial} correspond to regular patterned fronts in the sense of Definition~\ref{defn:regular patterned front}. Related earlier transversality results that follow directly from Newton–Kantorovich-type arguments can be found in \cite{MR3068557,MR3207723,MR4068579,MR4658475}. 
\end{rem}

Having already formulated a zero-finding problem $F$ to characterize the heteroclinic connections, as well as the Banach space $X$ defined in \eqref{eq:Banach_Space_X} on which $F$ acts, the remaining steps required to apply Theorem~\ref{thm:radii_polynomial} are as follows: define the linear operators $A^\dagger(s)$ and $A(s)$, construct a numerical approximation $\bx(s)$ to the solution, and compute the associated bounds $Y_0, Z_0,Z_1$ and $Z_2$. These elements enable the construction of the radii polynomial \eqref{radii_poly}, from which we determine a value $r_0 > 0$ that rigorously validates the existence and local uniqueness of a solution $\tilde{x}(s)$ in a neighborhood of $\bx(s)$, for all $s \in [-1,1]$. By design, the regular patterned front $\mathcal{S}(s)$ described in \eqref{patterned_front} is a subset of the solution $\tilde{x}(s)$. The only remaining step is to verify a posteriori that the derivative of this front with respect to the continuation parameter $s$ does not vanish.
\begin{thm}\label{thm:smoothbranchaposteriori}
	Suppose the hypotheses of Theorem \ref{thm:radii_polynomial} are satisfied, yielding a continuous branch of solutions $\bigcup_{s \in [-1,1]} \tilde{x}(s) $. If 
	\begin{align}\label{cond_smooth}
		- \left( \frac{d\bx}{ds} (s) \cdot \eta_s(s)     \right) + r_0 \sup_{s \in [-1,1]} \left|  \frac{d\eta_s}{ds} (s) \right| < 0,
	\end{align}
	for all $s\in [-1,1]$, then the branch is a smooth curve and $\frac{d \tilde{x}}{ds}(s) \neq 0$ for $s \in [-1,1]$.
\end{thm}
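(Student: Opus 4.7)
The plan is to exploit the fact that by construction the solution $\tilde{x}(s)$ lies on the pseudo-arclength hyperplane $Q_s(\tilde{x}(s),s) = 0$, differentiate this scalar constraint with respect to $s$, and then turn the uniform estimate $\|\tilde{x}(s)-\bx(s)\|_X \leq r_0$ against the assumption \eqref{cond_smooth}.

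First, smoothness of the branch is immediate: Theorem~\ref{thm:radii_polynomial} already delivers $\tilde{x} \in C^k$ (with $k\geq 1$), so the only statement left to prove is that $\frac{d\tilde{x}}{ds}(s)\neq 0$ for all $s\in[-1,1]$. From $F(\tilde{x}(s),s)=0$ and the definition of the first component of $F$ in \eqref{eq:ZFP}, we have
\begin{equation*}
Q_s(\tilde{x}(s),s) = (\tilde{x}(s)-\bx(s))\cdot \eta_s(s) = 0 \qquad \text{for all } s\in[-1,1].
\end{equation*}
Differentiating this identity in $s$ (which is valid since $\tilde{x}$, $\bx$, and $\eta_s$ are all at least $C^1$) yields
\begin{equation*}
\frac{d\tilde{x}}{ds}(s)\cdot \eta_s(s) \;=\; \frac{d\bx}{ds}(s)\cdot \eta_s(s) \;-\; (\tilde{x}(s)-\bx(s))\cdot \frac{d\eta_s}{ds}(s).
\end{equation*}

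Next, I would apply the Cauchy--Schwarz-type estimate corresponding to the pairing $\cdot$ used in the pseudo-arclength constraint, namely $|u\cdot v|\leq \|u\|_X \, |v|$, so that
\begin{equation*}
\left| (\tilde{x}(s)-\bx(s))\cdot \tfrac{d\eta_s}{ds}(s) \right| \;\leq\; \|\tilde{x}(s)-\bx(s)\|_X \cdot \left|\tfrac{d\eta_s}{ds}(s)\right| \;\leq\; r_0 \sup_{s\in[-1,1]} \left|\tfrac{d\eta_s}{ds}(s)\right|,
\end{equation*}
using the a~posteriori bound $\|\tilde{x}(s)-\bx(s)\|_X\leq r_0$ from Theorem~\ref{thm:radii_polynomial}. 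Combining these two displays and invoking the hypothesis \eqref{cond_smooth}, rewritten as $\frac{d\bx}{ds}(s)\cdot \eta_s(s) > r_0 \sup_{s}|\frac{d\eta_s}{ds}(s)|$, yields $\frac{d\tilde{x}}{ds}(s)\cdot \eta_s(s) > 0$ for every $s\in[-1,1]$. A nonzero scalar projection forces $\frac{d\tilde{x}}{ds}(s)\neq 0$, which is exactly the desired non-degeneracy of the curve.

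The main obstacle here is essentially bookkeeping rather than analysis: one must make sure that the pairing $\cdot$ appearing in the pseudo-arclength constraint is compatible with a Cauchy--Schwarz-type inequality against the norm of $X$. Since the tangent direction $\eta_s(s)$ involves only finitely many nontrivial weighted components in the product space \eqref{eq:Banach_Space_X} used in the continuation (and in practice is normalized for this pairing), this reduces to choosing the dual norm on $\frac{d\eta_s}{ds}(s)$ consistently with the norm $\|\cdot\|_X$ used to bound $\|\tilde{x}-\bx\|_X$ by $r_0$. Once that convention is fixed, the chain of inequalities above is straightforward and the conclusion follows without requiring any new analytic input beyond what Theorem~\ref{thm:radii_polynomial} already provides.
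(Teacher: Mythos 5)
Your proof is correct and follows essentially the same route as the paper: differentiate the pseudo-arclength constraint $(\tilde{x}(s)-\bx(s))\cdot\eta_s(s)=0$, bound the term $(\tilde{x}(s)-\bx(s))\cdot\frac{d\eta_s}{ds}(s)$ by $r_0\sup_s|\frac{d\eta_s}{ds}(s)|$, and conclude $\frac{d\tilde{x}}{ds}(s)\cdot\eta_s(s)>0$. Your added caveat about the compatibility of the pairing with the norm on $X$ is a reasonable point that the paper leaves implicit, but it does not change the argument.
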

\begin{proof}
	Since $\tilde{x}(s)$ is the root of the map $F$, this implies that $Q(\tilde{x}(s);s) = (\tilde{x}(s) - \bx(s)) \cdot \eta_s(s) = 0$. Differentiating with respect to $s$ and using (\refeq{cond_smooth}) give us
	\begin{align*}
		- \frac{d \tilde{x}}{ds}(s) \cdot \eta_s(s) &= - \frac{d \bx }{ds}(s) \cdot \eta_s(s) + (\tilde{x}(s) - \bx(s)) \cdot \frac{d \eta_s}{ds}(s) \\
		& \leq  - \frac{d \bx }{ds}(s) \cdot \eta_s(s) +  r_0 \sup_{s \in [-1,1]} \left|  \frac{d\eta_s}{ds} (s) \right| 
		< 0 ,
	\end{align*}
	for all $s\in [-1,1]$. This implies that $\frac{d \tilde{x}}{ds}(s) \neq 0$ for all $s\in [-1,1]$, yielding smoothness.
\end{proof}

Since the constituents of a regular patterned front $\mathcal{S}(s)$ form a subset of the solution $\tilde{x}(s)$, we cannot directly apply Theorem~\ref{thm:smoothbranchaposteriori} to establish the desired result. Instead, we must impose a specific structure on the vector $\eta_s(s)$.
\begin{cor}\label{cor_cond_smooth}
	Suppose the hypothesis of Theorem~\ref{thm:smoothbranchaposteriori} are satisfied, and that the components of $\eta_s(s)$ satisfy $(\eta_s(s))_i \neq 0$ for $i \in \{1,2,3,4,5,20,21,22,23,24\}$ and $(\eta_s(s))_i = 0$ otherwise. Then, $\mathcal{S}_s(s) \neq 0$ for all $s \in [-1,1]$.
\end{cor}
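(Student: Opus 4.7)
The plan is to inherit the strict positivity already established inside the proof of Theorem~\ref{thm:smoothbranchaposteriori} and then exploit the prescribed sparsity of $\eta_s(s)$ to transfer the non-vanishing from the full tangent $\frac{d\tilde{x}}{ds}(s)$ to its restriction to the patterned-front components. Consulting Table~\ref{table_variables} and Definition~\ref{dfn:patternedfront}, the ten indices $\{1,2,3,4,5,20,21,22,23,24\}$ correspond precisely to the variables $(\alpha,\gamma_1,\gamma_2,\gamma_3,\gamma_4,\omega,a_1,a_2,a_3,a_4)$, which collectively encode the regular patterned front $\mathcal{S}(s) = (\alpha(s),\omega(s),\gamma(\cdot,s),u_{\text{het}}(\cdot,s))$: the parameter $\alpha$, the frequency $\omega$, the Fourier coefficients of the symmetric periodic orbit, and the Chebyshev coefficients of the connecting orbit segment. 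All remaining components of $x$ (the Floquet exponent $\lambda$, the unstable bundle $v$, the Fourier-Taylor coefficients $w$ of $W_{\text{loc}}^u(\Gamma)$, the flight time $L_c$, the auxiliary variables $\theta_i,\sigma_i$, the stable eigendata $\xi$, and the Taylor coefficients $p$ of $W_{\text{loc}}^s(0)$) do not enter $\mathcal{S}$ directly, and by hypothesis $\eta_s(s)$ is zero on exactly these auxiliary slots.

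Next I would repeat the differentiation step from the proof of Theorem~\ref{thm:smoothbranchaposteriori}. Since $F(\tilde{x}(s),s)=0$ forces $Q_s(\tilde{x}(s),s)=(\tilde{x}(s)-\bx(s))\cdot\eta_s(s)=0$, differentiating in $s$ and invoking the bound \eqref{cond_smooth} together with the enclosure $\|\tilde{x}(s)-\bx(s)\|_X\leq r_0$ yields
\[
\frac{d\tilde{x}}{ds}(s)\cdot\eta_s(s) > 0 \quad\text{for every } s\in[-1,1].
\]
Under the sparsity hypothesis of the Corollary, $(\eta_s(s))_i=0$ whenever $i\notin\{1,2,3,4,5,20,21,22,23,24\}$, so the dot product collapses onto the patterned-front block. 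Denoting by $\eta_s^{\mathcal{S}}(s)$ the restriction of $\eta_s(s)$ to these ten indices, the inequality becomes
\[
\mathcal{S}_s(s)\cdot\eta_s^{\mathcal{S}}(s) > 0,
\]
which directly precludes $\mathcal{S}_s(s)=0$ at any $s\in[-1,1]$, as a vanishing $\mathcal{S}_s(s)$ would make the left-hand side equal to zero.

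The argument is essentially bookkeeping rather than analysis, and the only point requiring care is verifying that the prescribed index set in the Corollary matches exactly the components constituting $\mathcal{S}(s)$; this is why the support of $\eta_s$ is specified so precisely in the hypothesis. No additional estimates or interval-arithmetic computations are needed beyond those already used to invoke Theorem~\ref{thm:smoothbranchaposteriori}. In particular, once the pseudo-arclength tangent $\eta_s$ is designed with this support during the continuation step, the non-degeneracy of the patterned-front branch demanded by Hypothesis~\ref{hyp:regular_patterned_front} follows for free from the radii-polynomial verification.
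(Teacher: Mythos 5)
Your proposal is correct and follows essentially the same route as the paper: differentiate the pseudo-arclength constraint $Q_s(\tilde{x}(s),s)=0$, use \eqref{cond_smooth} to get $\frac{d\tilde{x}}{ds}(s)\cdot\eta_s(s)>0$, and observe that the prescribed support of $\eta_s$ collapses this inner product onto the components $(\alpha,\gamma,\omega,a)$, whose non-vanishing derivative forces $\mathcal{S}_s(s)\neq 0$. The only cosmetic difference is that the paper is slightly more careful to phrase these ten components as a \emph{subset} of the components of $\mathcal{S}(s)$ (the $a_i$ encode only the connecting segment of $u_{\text{het}}$, not the full heteroclinic), but the logical direction of your argument is unaffected.
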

\begin{proof}
Let $\tilde{\mathcal{S}}(s) \bydef \left(
		\tilde{\alpha}(s) , \tilde{\gamma}_1(s) , \tilde{\gamma}_2(s) , \tilde{\gamma}_3(s)  , \tilde{\gamma}_4(s) , \tilde{\omega}(s) , \tilde{a}_1(s) , \tilde{a}_2(s) , \tilde{a}_3(s) , \tilde{a}_4(s)
\right)$. Note that the set of components of $\tilde{\mathcal{S}}(s)$ is a subset of the set of components of the regular patterned front $\mathcal{S}(s)$. Let
$\tilde{\eta}_s(s) \bydef \left(
		(\eta_s(s))_1 , \cdots  ,  (\eta_s(s))_{5} ,  (\eta_s(s))_{20} ,  \dots ,  (\eta_s(s))_{24} \right)$. By construction of the parameterized family of tangent vectors $\eta_s(s)$, we have
	\begin{align*}
		- \frac{d \tilde{x}}{ds}(s) \cdot \eta_s(s) = -  \frac{d\tilde{\mathcal{S}}}{ds} (s) \cdot \tilde{\eta}_s(s) <  0.
	\end{align*}
	Therefore, $\frac{d\tilde{\mathcal{S}}}{ds}(s) \neq 0$ for all $s \in [-1,1]$, implying that the derivative of the regular patterned front $\mathcal{S}(s)$ with respect to $s$ does not vanish.
\end{proof}
By choosing $\eta_s(s)$ to satisfy the conditions of Corollary~\ref{cor_cond_smooth}, and using the fact that both $\eta_s(s)$ and $\bx(s)$ are known finite polynomials, the inequality \eqref{cond_smooth} can be rigorously verified once a suitable radius $r_0$ is determined via the Newton-Kantorovich Theorem~\ref{thm:radii_polynomial}.

\subsection{Projections, data representation, and the operators \boldmath$A^\dagger(s)$\unboldmath~and~\boldmath$A(s)$\unboldmath}\label{sec:data_representation}

In this section, we introduce the finite-dimensional projections used to carry out our computations, we develop an efficient computational framework that leverages the Fast Fourier Transform (FFT) to obtain a parameterized representation of the numerical approximations over the parameter space $s \in [-1,1]$, and we define the parameter-dependent linear operators $A^\dagger(s)$ and $A(s)$.

There are two main challenges in finitely representing the data needed for the application of computer-assisted methods. First, the solution forms a continuous branch that depends on the parameter $s \in [-1,1]$. Second, for any fixed value of $s$, the solution $x \in X$ lies in an infinite-dimensional Banach space, and involves a countably infinite number of components. Our approximation strategy begins by fixing $s$, and constructing an approximate solution lying in some fixed finite-dimensional subspace of $X$. This approximation thus has a finite representation, which is then extended to the entire interval $s \in [-1,1]$ by interpolating the data obtained from a finite set of sampled parameter values.

Throughout the remainder of this work, we let $\bx(s) \in X$ denote a numerical approximation of the solution to the zero-finding problem, satisfying $F(\bx(s), s) \approx 0$ for each fixed $s \in [-1,1]$.

As noted above, we require our numerical approximation to lie in a finite-dimensional subspace of $X$. To this end, we construct the approximation $\bx(s)$ such that each component $(\bx(s))_i$, corresponding to a coefficient sequence in a series representation, contains only finitely many nonzero terms. For instance, recalling the labeled variables in Table~\ref{table_variables}, for any $x\in X$, the components $(x_2,x_3,x_4,x_5)$ represent the cosine/sine Fourier coefficients of the periodic solution, and fixing $N_\gamma \in \N$, we define the finite projection operator $\pi^{N_i}$ acting on $ x_i $ for $i = 2,3,4,5$ element-wise by 
\[
(\pi^{N_i} x_i)_k  = 
\begin{cases}
	(x_i)_k, & \text{for } k \leq N_\gamma, \\
	0, & \text{otherwise}.
\end{cases}
\]
Analogous projection operators are defined for other components of the solution, such as those associated with the bundle, connecting orbit, unstable manifold, and stable manifold. These projections use respective truncation orders $ N_v , N_a , N_{w_F}, N_{w_T}, N_p\in \N$. We then define a global projection operator $\pi^N:{X} \rightarrow {X}$ acting component-wise on $x \in X$, by
\[
(\pi^N x )_i = 
\begin{cases}
	x_i, & \text{if }  i = 1,6,15,16,17,18,19,20,25,\\
	\pi^{N_i} x_i,	& \text{otherwise}.
\end{cases}
\]
To describe the discarded (infinite) tail components, we define the complementary projections
\[
\pi^\infty_i \bydef I - \pi^{N_i} \quad \mbox{ and } \quad \pi^\infty \bydef I - \pi^{N}.
\]
By requiring that $\pi^N \bx(s) = \bx(s) \in X$, we ensure that our numerical approximation contains only a finite number of nonzero terms. This guarantees that the solution can be stored, manipulated, and represented within a finite-dimensional computational framework. To align the notation with the application, we define the truncation operator $\pi_{{N}_i}$, which maps a component $x_i \in X_i$ to a finite-dimensional vector by removing the zero-mode from the projection $\pi^{N_i} x_i$. We also define the global truncation operator $\pi_{N}$ acting component-wise, as
	\[
	\pi_{N} x = 
	\begin{bmatrix}
		\pi_{{N}_1}x_1 & \pi_{{N}_2}x_2  & \cdots &  \pi_{{N}_{29}}x_{29} 
	\end{bmatrix} .
	\]
The next challenge is to represent the entire continuous solution branch over the parameter range $s\in [-1,1]$ in a form suitable for use in Theorem~\ref{thm:radii_polynomial}. Let $N_s \in \mathbb{N}$, and suppose we are given a collection of $N_s+1$ data points $ \{ \bx^{0} , \bx^{1},\dots,\bx^{{N_s}} \} \subset X$, associated with parameter values $  \{ s_0,\dots,s_{N_s} \} \subset [-1,1]$, where $-1 = s_0 < s_1 < \dots  < s_{N_s  } = 1 $. Each data point $\bx^i$ approximates a solution of the equation $F(x,s_i)=0$ at the corresponding parameter value $s=s_i$. We define $\bx(s):[-1,1] \rightarrow X$ as the unique polynomial of degree at most $N_s$ that interpolates the data, that is $\bx(s_i) =\bx^i$ for all $i=0,\dots,N_s$. Such an interpolating polynomial exists and is continuous over the interval. Moreover, if each data point $\bx^i$ satisfies the projection condition $\pi^N \bx^i = \bx^i$, then the resulting polynomial $ \bx(s)$ is finite-dimensional and hence suitable for computation. In this work, we represent $\bx(s)$ using a truncated Chebyshev series of the form
\[
\bar{x}(s) = \mx_0 +2 \sum_{n = 1 }^{N_s} \mx_n T_n({s}) 
\]
where, as before, $T_n(s)$ denotes the $n^{th}$ Chebyshev polynomial of the first kind, and where each coefficient $\mx_i$ belongs to the Banach space $X$. While the choice of basis in the expansion in $s$ is in principle arbitrary, the Chebyshev basis is particularly advantageous due to the rapid decay of its coefficients, which enables efficient and accurate numerical approximation. In particular, the Fast Fourier Transform (FFT) can be used to rigorously compute the Chebyshev coefficients, resulting in reduced computational cost and runtime. A detailed description of the construction of the interpolation grid is given in Section~\ref{sec:opti}. To minimize interpolation error in $\bx(s)$, the data points are distributed according to arc-length along the solution branch. Specifically, the initial point $\bx^0$, the total number of points $N_s+1$, and an approximate value of the total arc-length are prescribed. A pseudo-arclength continuation algorithm is then used to compute the remaining points $\bx^i$.

Finally, we observe that the norm of the interpolated polynomial $\bx(s)$ over the interval $[-1,1]$ can be computed using only finitely many operations. Specifically, using that $|T_n(s)| \le 1$ for all $n$, we have the estimate
\begin{equation} \label{norm_s_to_fin}
	\| \bx(s) \|_X =  \left\| \mx_0 +2 \sum_{n = 1 }^{N_s} \mx_n T_n({s}) \right\|_{X} \leq \| \mx_0 \|_X +2 \sum_{n = 1 }^{N_s} \| \mx_n \|_{X},
\end{equation}
where the right-hand side is independent of the parameter $s$. Moreover, since the Chebyshev coefficients were obtained from data points that are invariant under the projection map $\pi^N$, it follows that the coefficients $\mx_n$ also satisfy $ \pi^N \mx_n = \mx_n $. Consequently, the computation of each norm $\| \mx_n \|_X$ is finite and can be evaluated explicitly.

Having established a finite-dimensional representation of our data and introduced the required finite-dimensional projections, we are now in a position to formally define the linear operators $A^\dagger(s)$ and $A(s)$. As suggested by the structure of the bounds in the Newton-Kantorovich Theorem~\ref{thm:radii_polynomial}, the radii polynomial $p(r)$ defined in \eqref{radii_poly} is minimized when the operator $A^\dagger(s)$ closely approximates the Fréchet derivative $DF(\bx(s),s)$ and when $A(s)$ approximates its inverse, that is
\[
 A^\dagger(s) \approx   DF( \bx(s) , s ) \quad \text{and} \quad A(s) \approx \left( A^\dagger(s) \right)^{-1}. 
\]
To enable practical computation, we construct $A^\dagger(s)$ and $A(s)$ so that they permit a transition between the infinite-dimensional Banach space $X$ and a corresponding finite-dimensional vector space. Since each component of the nonlinear map $F$ is a polynomial in its arguments, there exist integers $d_i$ such that the composition $F_i(\bx(s))$ can be expressed as a Chebyshev series of degree at most $d_iN_s$. Letting $d = \max_{i = 1,\dots,29} d_i$, it follows that each entry of the Fréchet derivative $DF(\bx(s),s)$ is a Chebyshev polynomial of degree at most $(d-1)N_s$. Using this feature to construct $A^\dagger(s)$, we define a collection of linear operators
\[
\left\{A^\dagger_0, \dots ,A^\dagger_{(d-1)N_s} \right\},
\]
which serve as Chebyshev coefficients of the interpolant $A^\dagger(s)$. Each linear operator $A^\dagger_n$ is block-structured as
\begin{equation}\label{eq:A dagger}
	A^\dagger_n = \begin{pmatrix}
		(A^{\dagger}_n)_{1,1} & \cdots & (A^{\dagger}_n)_{1,29}  \\
		\vdots & \ddots & \vdots \\
		(A^{\dagger}_n)_{29,1} & \cdots & (A^{\dagger}_n)_{29,29} 
	\end{pmatrix}
	\quad \text{where each block satisfies } (A^\dagger_n)_{i,j} = \begin{pmatrix}
		(A^\dagger_n)_{i,j}^N & 0 \\
		0 & \delta_{i,j}\Lambda_{i}^n
	\end{pmatrix}.
\end{equation}
%
%
%\begin{equation}\label{eq:A dagger}
%	(A^\dagger_n)_{i,j} = \begin{pmatrix}
%		(A^\dagger_n)_{i,j}^N & 0 \\
%		0 & \delta_{i,j}\Lambda_{i}^n
%	\end{pmatrix}.
%\end{equation}
%
Here, $(A^\dagger_n)_{i,j}\approx  \pi_{N_i} D_{x_j}F_i(\bx^n)\pi^{N_j}$ and $\delta_{i,j}$ is the Kronecker delta. Each operator $\Lambda_i^n$ is a diagonal operator on the tail space, capturing the spectral structure of the linearization and defined entry-wise as
\[
{\footnotesize
(\Lambda^n_{i} )_{kk}  = 
\begin{cases}
		k + N_\gamma + 1,	& \text{if } 2 \leq i \leq 5 \text{ and } k \in \N, \\
		\mi (k + \sign(k) (N_v +1 )) + \bx^i_6,	& \text{if } 7 \leq i \leq 10 \text{ and } k \in \Z, \\
		\mi (k_1 + \sign(k_1) (N_{w,1} +1 ) ) + \bx^i_6 (k_2 + (N_{w,2} +1 )),& \text{if } 11 \leq i \leq 14  \text{ and } k = (k_1,k_2) \in \Z \times \N, \\
		2( k + N_a +1 ), 	& \text{if } 21 \leq i \leq 24 \text{ and } k \in \N, \\
		\bx_{25,1} ( k_1 + N_p +1 ) + \bx_{25,6} ( k_2 + N_p +1 ),	& \text{if } 26 \leq i \leq 29 \text{ and } k = (k_1,k_2) \in \N^2, 
\end{cases}
}
\]
and zero elsewhere. In practice, the finite matrices $(A^\dagger_n)_{i,j}^N$ are computed by evaluating the derivatives $\pi_{N_i} D_{x_j}F_i(\bx^n)\pi^{N_j}$ at each grid point $\bx^n$, using the same Chebyshev interpolation grid of  $(d-1)N_s +1$ points $\{ \bx^0, \dots, \bx^{(d-1)N_s} \}$ as in the construction of $\bx(s)$.

Finally, the operator $A^\dagger (s)$ is defined by interpolating the block matrices $ \{A^\dagger_0,\dots,A^\dagger_{(d-1)N_s} \} $ using a Chebyshev expansion, and is denoted as
\begin{equation}
	A^\dagger(s) = \begin{pmatrix}
		A^{\dagger}_{1,1}(s) & \dots & A^{\dagger}_{1,29}(s)  \\
		\vdots & \ddots & \vdots \\
		A^{\dagger}_{29,1}(s) & \dots & A^{\dagger}_{29,29}(s)
	\end{pmatrix} \mbox{ with } A^{\dagger}_{i,j}(s) = \begin{pmatrix}
		A^{N^\dagger}_{i,j}(s) & 0 \\
		0 & \delta_{i,j} \Lambda_i (s)
	\end{pmatrix}.
\end{equation}
We now define the operator $A(s)$, which serves as an approximate inverse to $ A^{\dagger}(s)$. To this end, we first construct a sequence of matrices
\begin{equation} \label{eq:grid_of_approximated_inverses}
\left\{ A_0^N,\dots,A_{N_s}^N \right\}
\end{equation}
where each matrix $A_n^N$ is a numerical approximation of the inverse 
\[
A_n^N \approx \left( \pi_{N}A^{N^\dagger}_n \pi^N \right)^{-1}.
\]
Using the grid of approximated inverses \eqref{eq:grid_of_approximated_inverses}, we define the finite-dimensional part of the operator $A(s)$, denoted $A^N(s)$, as the Chebyshev interpolation of the sequence$\{ A_0^N,\dots,A_{N_s}^N \}$. In addition to the finite part, the tail operator $\Lambda_i(s)$, introduced in the definition of $A^\dagger(s)$, must also be inverted to define $A(s)$ completely. Provided that the truncation orders $N_\gamma, N_v, N_{w,F},N_{w,T}, N_a, N_p $ are sufficiently large, one can verify that the diagonal entries of $\Lambda_i(s)$ remain strictly nonzero for all $s \in [-1,1]$. Hence, each $\Lambda_i(s)$ is invertible, and we denote its inverse by $\Lambda_i^{-1}(s)$. Together, $A^N(s)$ and $\Lambda_i^{-1}(s)$ define the full linear operator $A(s)$ as
\begin{equation}
	A(s) = \begin{pmatrix}
		A^{}_{1,1}(s) & \dots & A^{}_{1,29}(s)  \\
		\vdots & \ddots & \vdots \\
		A^{}_{29,1}(s) & \dots & A^{}_{29,29}(s)
	\end{pmatrix} \mbox{ with } A^{}_{i,j}(s) = \begin{pmatrix}
		A^{N}_{i,j}(s) & 0 \\
		0 & \delta_{i,j} \Lambda_i^{-1} (s)
	\end{pmatrix}.
\end{equation}
This construction yields an operator $A(s)$ such that the composition $A(s) A^{\dagger} (s)\approx I$ holds uniformly for all $s \in [-1,1]$, where $I$ denotes the identity operator on $X$. With the operators $A^\dagger(s)$ and $A(s)$ now defined, we are ready to proceed with the computation of the bounds $Y_0, Z_0, Z_1$ and $Z_2$ as required in Theorem~\ref{thm:radii_polynomial}.

\subsection{Explicit bounds for the radii polynomial} \label{sec:explicit_bounds}

In this section, we present the explicit formulas required to compute the bounds $Y_0, Z_0, Z_1$ and $Z_2$ appearing in Theorem~\ref{thm:radii_polynomial}. These bounds quantify the residual error of the approximate solution, the defect in the approximate inverse, and the Lipschitz properties of the Fréchet derivative. Their rigorous estimation is central to validating the existence and uniqueness of a true solution in a neighborhood of the numerical approximation.

Throughout this section, to simplify the presentation, we omit the explicit dependence of the map $F(x(s), s)$ on the parameter $s \in [-1,1]$ and denote it simply by $F(x(s))$.

% Y_0
\subsubsection{The bound \boldmath$Y_0$\unboldmath}

In this section, we develop an explicit strategy to compute the bound $Y_0$ satisfying \eqref{bound_Y_0}. To begin, we first establish an upper bound on the norm of the inverse operator $\Lambda_i^{-1}(s)$ as follows
\[
{\footnotesize
\| \Lambda_i^{-1}(s) \|_{X_i} \leq \overline{\Lambda}_i \bydef
\begin{cases}
(N_\gamma + 1)^{-1}, & \text{if } 2 \leq i \leq 5, \\
(N_v + 1)^{-1}, & \text{if } 7 \leq i \leq 10, \\
\max\left( (N_{w_F} + 1)^{-1}, \left((N_{w_T} + 1) \inf\limits_{s \in [-1,1]} |\bx_6(s)|\right)^{-1} \right), & \text{if } 11 \leq i \leq 14, \\
(2N_a + 1)^{-1}, & \text{if } 21 \leq i \leq 24, \\
(N_p + 1)^{-1} \max \left( 
\sup\limits_{s \in [-1,1]} |\Re(\bx_{25,1}(s))|^{-1}, 
\sup\limits_{s \in [-1,1]} |\Re(\bx_{25,6}(s))|^{-1} 
\right), & \text{if } 26 \leq i \leq 29, \\
0, & \text{otherwise}.
\end{cases}
}
\]
Here, $\bx_{25,1}(s)$ and $\bx_{25,6}(s)$ correspond to the Chebyshev interpolation of the stable eigenvalues $\bar{\lambda}_1(s)$ and $\bar{\lambda}_2(s)$ of $D_u \Psi(0,\bar{\alpha}(s))$. Therefore, we obtain 
\[
\| A(s)F(\bx(s)) \|_X = \max_{i = 1,\dots,29} \mu_i \left\| \sum_{j = 1}^{29} A_{i,j}(s) F_j(\bx(s))\right\|_{X_i} \leq \max_{i = 1,\dots,29} \mu_i \sum_{j = 1}^{29} \left\|  A_{i,j}(s) F_j(\bx(s)) \right\|_{X_i},
\]
with
\begin{align} 
\nonumber
\left\|  A_{i,j}(s) F_j(\bx(s)) \right\|_{X_i} & \leq  \left\| \pi^{N_i} A_{i,j}(s) F_j(\bx(s)) \right\|_{X_i} + \left\| \pi^\infty_i  A_{i,j}(s) F_j(\bx(s)) \right\|_{X_i} \\
&\leq  \left\| \pi^{N_i} A_{i,j}(s) F_j(\bx(s)) \right\|_{X_i} + \delta_{i,j} \overbar{\Lambda}_i \left\|  \pi^\infty_i F_j(\bx(s)) \right\|_{X_i}.
\label{eq:Y_bound_expansion}
\end{align}

For the first term in the right-hand side of \eqref{eq:Y_bound_expansion}, we recall that both $A_{i,j}(s)$ and $\bx(s)$ are finite Chebyshev series of order $N_s$ and $F_i(\bx(s))$ is a Chebyshev series of order at most $d_iN_s$. It follows that the product $ A_{i,j}(s)  F_i(\bx(s))$ is itself a finite Chebyshev series of order at most $(d_i+1)N_s$. To compute the Chebyshev coefficients of the product $ \pi^{N_i} A_{i,j}(s)  F_i(\bx(s))$, we proceed by partitioning the interval $[-1,1]$ into a grid of $(d_i+1)N_s+1$ points, denoted by $\{s_0,\dots,s_{(d_i+1)N_s}\}$, and evaluate the product at each grid point
\[
\pi^{N_i} A_{i,j}(s_k)  F_i(\bx(s_k)) \quad \mbox{for} \quad k = 0,\dots,(d_i+1)N_i.
\]
From this sampled data, we can compute the Chebyshev coefficients of $ \pi^{N_i} A_{i,j}(s)  F_i(\bx(s))$ using a rigorous FFT algorithm combined with interval arithmetic (e.g. see \cite{MR2269503,MR3833658,MR3167726}). Finally, as established in (\refeq{norm_s_to_fin}), the norm $ \| \pi^{N_i} A_{i,j}(s)  F_i(\bx(s)) \|_{X_i}$ can be computed finitely and rigorously using interval arithmetic techniques.
	 
For the second term in the right-hand side of \eqref{eq:Y_bound_expansion}, as previously mentioned, each component $F_i$ is a polynomial of order at most $d_iN_s$. We apply the same strategy used for the first term by partitioning the interval $[-1,1]$ into the grid of $d_iN_s+1$ points, denoted by $\{s_0,\dots,s_{d_iN_s}\}$. Evaluating the function at these nodes yields the set $\{F_i(\bx(s_0),\dots,F_i(\bx(s_{d_iN_s})\}$. Now, let $x_i,y_i \in X_i$ be such that $ \pi^{N_i} x_i = x_i $ and $ \pi^{N_i} y_i = y_i $. Then the convolution $x_i * y_i$, as defined in one of the equations \eqref{convo_gamma}, \eqref{convo_v}, \eqref{convo_w} or \eqref{convo_p}, has only a finite number of nonzero modes. Since $F_i$ is polynomial, it follows that each $F_i(\bx(s_k))$ also has a finite Chebyshev expansion. Consequently, the computation of $\left\| \pi^\infty_i F_i(\bx(s)) \right\|_{X_i}$ is also finite. Hence, using interval arithmetic, we can compute a rigorous bound $Y_0$ such that \eqref{bound_Y_0} holds.

% Z_0
\subsubsection{The bound \boldmath$Z_0$\unboldmath}

In this section, we provide an explicit computational strategy for estimating the bound $Z_0$ appearing in \eqref{bound_Z_0}, which involves bounding the operator norm $\| I - A(s) A^\dagger(s) \|_{B(X)}$. 

Note first that each entry of the composed operator $A(s) A^\dagger(s)$ is given by
\[
[A(s) A^\dagger(s)]_{i,j} = \sum_{k = 1}^{29} A_{i,k}(s) A^\dagger_{k,j}(s),
\]
for $1 \leq i,j \leq 29$, with
\[
A_{i,k}(s) A^\dagger_{k,j} (s) 
= \begin{pmatrix}
	 	A_{i,k}^N(s) A_{k,j}^{N^\dagger}(s) & 0 \\
	 	0 & \delta_{i,k} \delta_{k,j} \Lambda_{i}^{-1} (s)  \Lambda_{k} (s) 
\end{pmatrix}.
\]
By the construction of the operators, it follows that
\begin{equation} \label{Z_0_eq}
	[I - A(s)A^\dagger(s)]_{i,j} =  \pi^{N_i} I_{i,j} \pi^{N_j} - \pi^{N_i} \left( \sum_{k= 1}^{29}A_{i,k}(s) A_{k,j}^{\dagger}(s) \right)  \pi^{N_j}.
\end{equation}
In other words, the entries $[I - A(s)A^\dagger(s)]_{i,j}$ contain only finitely many nonzero terms, corresponding to $\pi_{N_i}I_{i,j} \pi_{N_j} - A_{i,k}^N(s) A_{k,j}^{N^\dagger}(s)$, which is a Chebyshev polynomial of degree at most $dN_s$. As a result, evaluating the bound $\| I - A(s) A^\dagger(s) \|_{B(X)}$ reduces to computing the norm of a finite-dimensional matrix. However, among all the bounds appearing in Theorem~\ref{thm:radii_polynomial}, the computation of $Z_0$ is by far the most time-consuming. This is primarily because it involves full matrix-matrix products of the form $A(s) A^\dagger(s)$, whereas the other bounds can be reduced to matrix–vector operations. By construction, the bound $Z_0$ is expected to be small. Nevertheless, the Newton-Kantorovich Theorem does not require it to be computed with high precision. In practice, it suffices to verify that $Z_1 + Z_0 < 1$, so obtaining $Z_0$ to machine precision is unnecessary. To address this, we introduce a strategy that yields a sufficiently accurate upper bound for $Z_0$, while significantly reducing both computational time and memory usage. We begin by decomposing the $(d-1)N_s$-degree Chebyshev polynomial $\bar{A}^\dagger(s)$ into a truncated and a tail component as
\[
A^\dagger(s) \bydef A_{N_s}^\dagger(s) + A_\infty^\dagger(s),
\]
where $A_{N_s}^\dagger(s)$ denotes the degree-$N_s$ truncation of the Chebyshev expansion, and $A_\infty^\dagger(s)$ corresponds to the tail. The next step is to derive a bound for the norm
\[
\| I - A(s) A^\dagger(s) \| = \left\| I - A(s)\left(A_{N_s}^\dagger(s) + A_\infty^\dagger(s)\right) \right\| 
	\leq \| I - A(s) A_{N_s}^\dagger(s) \| + \| A(s) \| \cdot \| A_\infty^\dagger(s) \|.
\]
The term $I - A(s) A_{N_s}^\dagger(s)$ yields a Chebyshev polynomial of degree at most $2N_s$, which significantly reduces the number of points required in the interpolation grid for its evaluation. Let $I - A_k (A_{N_s}^\dagger)_k$ denote a representative element of this grid. Each entry of the matrices $A_k$ and $(A_{N_s}^\dagger)_k$ is stored as an interval consisting of a midpoint and a radius. Let $\bar{A}_k$ and $(\bar{A}_{N_s}^\dagger)_k$ denote the midpoints of $A_k$ and $(A_{N_s}^\dagger)_k$, respectively. We define $fl\left(\bar{A}_k (\bar{A}_{N_s}^\dagger)_k\right)$ as the floating-point evaluation of the matrix product. Then, there exists a constant $\varepsilon_k > 0$ such that
\[
\left\| I - \bar{A}_k (\bar{A}_{N_s}^\dagger)_k \right\| \leq \left\| I - fl\left(\bar{A}_k (\bar{A}_{N_s}^\dagger)_k\right) \right\| + \varepsilon_k.
\]
A detailed justification of this estimate is provided in Appendix~\ref{appA:flaoting_point}. This result enables us to perform the matrix multiplication $\bar{A}_k (\bar{A}_{N_s}^\dagger)_k$ using standard floating-point arithmetic while accounting for rounding errors via the constant $\varepsilon_k$. The value of $\varepsilon_k$ depends explicitly on the norms of the matrices involved and the precision used during the computation. In cases where the matrices are poorly conditioned, the product can be computed in higher precision (e.g., using \texttt{BigFloat}), which typically remains more efficient than carrying out the entire computation using interval arithmetic alone. Hence, we obtain a rigorous bound $Z_0$ satisfying \eqref{bound_Z_0}.

% Z_1
\subsubsection{The bound \boldmath$Z_1$\unboldmath}

In this section, we provide an explicit computational strategy for estimating the bound $Z_1$ appearing in \eqref{bound_Z_1}, which involves bounding the operator norm $\|A(s)[DF(\bx(s),s) - A^\dagger(s)]\|_{B(X)}$. 

Given $x_i \in X_i$ and $x_j \in X_j$, we define the generalized discrete convolution component-wise by
\begin{align*}
	x_i * y_j  = 
	\left\{	
	\begin{tabular}{cl}
		$ y_i x_i $ & if $ X_j = \mathbb{K} $, \\
		$ x_i * y_i $ & if $X_i = X_j  \neq  \mathbb{K}$,
	\end{tabular}
	\right.
\end{align*}
 where the discrete convolution $x_i * y_i$ is defined according to equations \eqref{convo_gamma}, \eqref{convo_v}, \eqref{convo_w} and \eqref{convo_p}. To obtain $Z_1$, note first that
\begin{align*}
	\left\| A(s)[ DF (\bx(s)) - A^\dagger(s) ] \right\|_{B(X)} &\leq \max_{i= 1,\dots,29} \mu_i \sum_{j = 1}^{29} \frac{1}{\mu_j} \left\| \left( A(s)[ DF (\bx(s)) - A^\dagger(s)] \right)_{i,j} \right\|_{B(X_j,X_i)},
\end{align*}
where each component can be separated into two bounds, namely
\begin{align} \nonumber
	\left\| \left( A(s)[ DF (\bx(s)) - A^\dagger(s)\right)_{i,j} \right\|_{B(X_j,X_i)} \leq & \left\| \pi^{N_i} (A(s) DF \left(\bx(s)) \right)_{i,j}\pi^\infty_j \right\|_{B(X_j,X_i)} \\
	& \quad + \left\|  A_{i,i}(s) \pi_i^\infty \left( DF(\bx(s)) - A^\dagger (s) \right)_{i,j}\right\|_{B(X_j,X_i)} \\
	\label{eq:expansion_Z1_bound}
	&\le \left\| \pi^{N_i} (A(s) DF(\bx(s)))_{i,j}\pi^\infty_j \right\|_{B(X_j,X_i)} \\
& \quad + 	\sup_{\|h_j\|_{X_j} \leq 1} \overbar{\Lambda}_i \left\|  \pi_i^\infty \left( DF(\bx(s)) - A^\dagger (s) \right)_{i,j} h_j\right\|_{X_i}
\end{align}
%
%The second term in the right-hand side of \eqref{eq:expansion_Z1_bound} can be bounded by
%%
%\begin{align*}
%	\left\|  A_{i,i}(s) \pi_i^\infty \left( DF(\bx(s)) - A^\dagger (s) ] \right)_{i,j}\right\|_{B(X_j,X_i)} &=  \sup_{\|h_j\|_{X_j} \leq 1}  \left\|  A_{i,i}(s) \pi_i^\infty \left( DF(\bx(s)) - A^\dagger (s) \right)_{i,j} h_j\right\|_{X_i} \\
%	&  \leq  \sup_{\|h_j\|_{X_j} \leq 1} \overbar{\Lambda}_i \left\|  \pi_i^\infty \left( DF(\bx(s)) - A^\dagger (s) \right)_{i,j} h_j\right\|_{X_i} .
%\end{align*}
%
The remainder of this section is devoted to bounding the two terms on the right-hand side of \eqref{eq:expansion_Z1_bound}. 

We begin by bounding the second term in the right-hand side of \eqref{eq:expansion_Z1_bound}. The analysis of each expression $ \pi_i^\infty \left( DF(\bx(s)) - A^\dagger (s) \right)_{i,j} h_j$ follows a largely uniform structure across most indices $(i,j)$, with a few notable exceptions that require a more detailed treatment. We denote the set of these exceptional indices by $\mathcal{I}$, defined as 
\begin{align*}
   \mathcal{I} \bydef \left\{(i,j) \in \N^2 ~ | ~ i = 11,12,13,14 \mbox{ and } j = i-9 \mbox{ or } j = i-4 \mbox{ or } j = i \right\} .
\end{align*} 
Letting
\[
\beta_{i,j} \bydef
	\begin{cases}
		1, & \text{if } X_i = X_j \text{ or } X_i = \mathbb{K} \text{ or } X_j = \mathbb{K}, \\
		0,  & \text{otherwise},
	\end{cases}
\]
and by construction of the map $F$ in \eqref{eq:ZFP} (see also \eqref{eq:zero_finding_f}), if $(i,j) \notin \mathcal{I}$, there exist elements $\bz_{i,j}(s) \in X_i \times [-1,1]$  with finitely many nonzero modes such that the tail of $[DF(\bx(s)) - A^\dagger(s)]_{i,j}$ acting on $h_j \in X_j$ is given by the tail of the discrete convolution $\beta_{i,j} \bz_{i,j}(s)*h_j$. That is,
\begin{align}\label{product_operator}
	\pi_i ^\infty \left( [DF(\bx(s)) - A^\dagger(s) ]_{i,j}h_j \right) = \beta_{i,j}  \pi_i^\infty \left(  \bz_{i,j}(s)*h_j\right).
\end{align}
Thus, we can bound
\begin{align*}
	\sup_{\|h_j\|_{X_j} \leq 1} \overbar{\Lambda}_i \left\|  \pi_i^\infty \left( DF(\bx(s)) - A^\dagger (s) \right)_{i,j} h_j\right\|_{X_i} &=  \sup_{\|h_j\|_{X_j} \leq 1} \overbar{\Lambda}_i \beta_{i,j} \left\|  \pi_i^\infty \left( \bz_{i,j} (s)* h_j \right) \right\|_{X_i}  \\
	&\leq \overbar{\Lambda}_i \beta_{i,j} \left\|  \bz_{i,j} (s) \right\|_{X_i} .
\end{align*}
Hence, for the indices $(i,j) \notin \mathcal{I}$, we define the bound 
\begin{equation} \label{eq:Zinfty_ij_not_in_I}
Z^{\infty}_{i,j} \bydef \sup\limits_{s \in [-1,1]} \overbar{\Lambda}_i \beta_{i,j} \left\|  \bz_{i,j} (s) \right\|_{X_i}
\end{equation}
which can be computed rigorously using a similar strategy as in \eqref{norm_s_to_fin}. While the case $(i,j) \in \mathcal{I}$ is comparable, the product cannot be directly expressed as a discrete convolution. Instead, for $i = 11,12,13,14$ and $j = i-9$, we have 
\[
 \pi_i^\infty \left( DF(\bx(s)) - A^\dagger (s) \right)_{i,i-9} h_{i-9} = 
	 \begin{cases}
	(h_{i-9})_{|n|,0}, & \text{if } |n| > N_j \text{ and } m = 0, \\
	0, & \text{otherwise},
\end{cases}
\]
and thus 
\begin{equation} \label{eq:Zinfty_ij_in_I_1}
		\sup_{\|h_j\|_{X_j} \leq 1} \overbar{\Lambda}_i \left\|  \pi_i^\infty \left( DF(\bx(s)) - A^\dagger (s) \right)_{i,i-9} h_{i-9}\right\|_{X_i}  \leq \overbar{\Lambda}_i \bydef Z^{\infty}_{i,i-9} .
\end{equation}
Similarly, if $j = i-4$ or $j = i$, we have 
\begin{equation} \label{eq:Zinfty_ij_in_I_2}
	\sup_{\|h_j\|_{X_j} \leq 1} \overbar{\Lambda}_i \left\|  \pi_i^\infty \left( DF(\bx(s)) - A^\dagger (s) \right)_{i,j} h_{j}\right\|_{X_i}  \leq \overbar{\Lambda}_i \bydef Z^{\infty}_{i,j} .
\end{equation}

Next, we proceed to bound the first term in the right-hand side of \eqref{eq:expansion_Z1_bound}. Notice that
\[
\left\| \pi^{N_i} (A(s) DF \left(\bx(s)) \right)_{i,j}\pi^\infty_j \right\|_{B(X_j,X_i)} \leq 
\sup_{\|h_j\|_{X_j} \le 1}
\sum_{ k = 1}^{29} \left\| \pi^{N_i}  A_{i,k}(s) \pi^{N_k} D_jF_k (\bx(s))\pi^\infty_j h_j \right\|_{X_i }
\]
and as with the second term, the computation of the expressions $\pi^{N_i}  A_{i,k}(s) \pi^{N_k} D_jF_k (\bx(s))\pi^\infty_j h_j$ is divided into two distinct cases. To this end, we define the finite set of indices $\mathcal{J} \subset \N^2$ corresponding to these cases as
\[
(k,j) \in  \mathcal{J} \Leftrightarrow  k,j \in \{ 1,\dots,29\} \mbox{ and } 
	\begin{cases}
		k = 15,16,17,18 \text{ and } j = k+6 \text{ or } j = k+11, \\
		k = 23,24,25,26 \text{ and } j = k -12 \text{ or } j = k.
	\end{cases}
\]
This set contains the indices corresponding to the partial derivatives for which the projected Jacobian $\pi^{N_k} D_jF_k (\bx(s))\pi^\infty_j$ has infinitely many nonzero constant terms. Intuitively, this corresponds to each entry in a row of $  D_jF_k (\bx(s))$ being constant. First, let consider the cases $(k,j) \notin \mathcal{J}$, then by structure of the problem the operator $\pi^{N_k}  D_jF_k \bx(s)\pi^\infty_j h_j $ is an operator that can be represented by a convolution $\beta_{k,j} \bz_{k,j} (s)$ similar to (\refeq{product_operator}) such that
\begin{align*}
	\pi^{N_k}  D_jF_k (\bx(s))\pi^\infty_j h_j =\beta_{k,j}  \pi^{N_k} \left(\bz_{k,j} (s) * \pi^\infty_j h_j \right).
\end{align*}
For each element of $ \left| \pi^{N_k} \left(\bz_{k,j} (s) * \pi^\infty_j h_j \right)_{n} \right|$, we can compute a bound denoted by $ \left(\psi_{k,j}(s)\right)_n$. For example, if $i=j=2,3,4,5$, then 
\begin{align*}
	\left|  \left( \bz_{k,j}(s) * \left(  \pi^\infty_j h_j \right) \right)_n \right| &= \left| \sum_{\ell \in \Z }  ( \bz_{k,j}(s) )_{|n-\ell|} \left(  \pi^\infty_j h_j \right)_\ell \right| 
	\leq  \sum_{\ell \in \Z }  \left| ( \bz_{k,j}(s) )_{|n-\ell|} \left(  \pi^\infty_j h_j \right)_\ell \right| \\
	& =  \sum_{ \substack{   |n- \ell| \leq  d_k N_k  \\  N_k < |\ell|}  }  \left| ( \bz_{k,j}(s) )_{|n-\ell|} \left(  \pi^\infty_j h_j \right)_\ell \right| 
	 =  \sum_{ \substack{   |n- \ell| \leq  d_k N_k  \\  N_k < |\ell|}  }  \left| ( \bz_{k,j}(s) )_{|n-\ell|} \left(  \pi^\infty_j h_j \right)_\ell \right| \frac{\nu_\gamma^{|\ell|}}{\nu_\gamma^{|\ell|}}  \\
	&\leq \max_{\substack{   |n- \ell| \leq  d_k N_k  \\  N_k < |\ell|} } \frac{\left| ( \bz_{k,j}(s) )_{|n-\ell|} \right|}{\nu_\gamma^{|\ell|}}  \sum_{   N_k < |\ell|  }  \left|  \left(  \pi^\infty_j h_j \right)_\ell \right| \nu_\gamma^{|\ell|} \\
	&=  \max_{\substack{   |n- \ell| \leq  d_k N_k  \\  N_k < |\ell|} } \frac{\left| ( \bz_{k,j}(s) )_{|n-\ell|} \right|}{\nu_\gamma^{|\ell|}}  \| \pi^\infty_j h_j \|_{X_j}  
	\leq \max_{\substack{   |n- \ell| \leq  d_k N_k  \\  N_k < |\ell|} } \frac{\left| ( \bz_{k,j}(s) )_{|n-\ell|} \right|}{\nu_\gamma^{|\ell|}} 
	\bydef \psi_{k,j} (s).
\end{align*}
We can compute such bound for other values of $(k,j) \notin \mathcal{J}$ in a similar way. Let $|A_{i,k}(s)|$ represents the absolute value element-wise of the operator $A_{i,k}(s)$, then 
\begin{equation} \label{eq:ZN_kj_not_in_J}
\left\| \pi^{N_i}  A_{i,k}(s) \pi^{N_k} D_jF_k( \bx(s))\pi^\infty_j h_j \right\|_{X_i } \leq \beta_{k,j} \left\| \pi^{N_i} \left|  A_{i,k}(s)  \right| \pi^{N_k} \psi_{k,j}(s) \right\|_{X_i } \bydef  Z_{i,k,j}^N
\end{equation}
and such computation is finite and done as the other before. The cases where $(k,j) \in \mathcal{J}$ are similar but by their structure, the operator action cannot be represented by a discrete convolution because of the equation corresponding to the initial conditions. For $k = 15,16,17,18$ and $j = k+6$, 
\begin{align} 
\left\| \pi^{N_i}  A_{i,k}(s) \pi^{N_k}  D_{k+6}F_k (\bx(s))\pi^\infty_{k+6} h_{k+6} \right\|_{X_{i} }  &\leq \left\| \pi^{N_i}  A_{i,k}(s) \pi^{N_k} \right\|_{B(X_k,X_i)} \left|2\sum_{ n > N_a } (h_{k+6})_n T_n(-1)\right| \\
&\leq \frac{\left\| \pi^{N_i}  A_{i,k}(s) \pi^{N_k} \right\|_{B(X_k,X_i)}}{\nu^{N_a+1}}
 \bydef  Z_{i,k,k+6}^N.
\label{eq:ZN_kj_in_J_1}
\end{align}
For $j = k+11$, we can verify that $\displaystyle 0.95 \approx \sup_{ s \in [-1,1] }  \bx_{18}^2(s) + \bx_{19}^2 (s) \bydef \bar{\sigma}^2 < 1 $, and hence 
\begin{align*}
	\left| \pi^{N_k}  D_jF_k (\bx(s))\pi^\infty_{k+11} h_{k+11} \right| & = \left|  \sum_{(n,m) \in \N^2} (\pi^\infty_{k+11}h_{k+11})_{n,m} \left( \bx_{18}(s)  + \mi \bx_{19}(s)   \right)^n \left( \bx_{18}(s)  - \mi \bx_{19}(s)   \right)^m \right| \\
	&\le  \sum_{ (n,m) \in \N^2} |(\pi^\infty_{k+11}h_{k+11})_{n,m}| \left( \sqrt{ \bx_{18}^2(s) + \bx_{19}^2 (s) }  \right)^{n + m} \\
	&\le \bar{\sigma}^{N_p+1} \sum_{ (n,m) \in \N^2} |(\pi^\infty_{k+11}h_{k+11})_{n,m}| 
	\le \left( \frac{\bar{\sigma}}{\nu_a} \right) ^{N_p+1}.
\end{align*}
Thus,
\begin{align} 
	\left\| \pi^{N_i}  A_{i,k}(s) \pi^{N_k}  D_{k+11}F_k (\bx(s))\pi^\infty_{k+11} h_{k+11} \right\|_{X_{i} } & \leq  \left\| \pi^{N_i}  A_{i,k}(s) \pi^{N_k} \right\|_{B(X_k,X_i)}\left( \frac{\bar{\sigma}}{\nu_a} \right) ^{N_p+1} \\
	& \bydef Z_{i,k,k+11}^N. \label{eq:ZN_kj_in_J_2}
\end{align}
Finally, we consider the case $k = 23,24,25,26 $. When, $ j = k-12$, most elements are trivial except
\begin{align*}
	\left( \pi^{N_k}  D_{k-12}F_k (\bx(s))\pi^\infty_{k-12} h_{k-12} \right)_0 &= 
	\sum_{(n,m) \in \Z \times \N} (\pi^{\infty}_{k-12} h_{k-12})_{n,m}  (\bx_{16}(s) + \sign(n) \mi \bx_{17}(s))^{|n|}   \sigma_0^{m}.
\end{align*}
We define the bound over $1 \approx \sup\limits_{s\in[-1,1]}  (\bx_{16}^2(s) +  \bx_{17}^2(s))^n \leq \bar{\theta}^2$ and also, making sure that $\bar{\theta} < \nu_w$, we can bound, setting
$\epsilon^{(u,\gamma)} \bydef \min ( ( \frac{\bar{\theta}}{\nu_w} )^{N_{w_F}}, ( \frac{|\sigma_0|}{\nu_w} )^{N_{w_T}} )$.

\begin{align*}
		\left| \pi^{N_k}  D_{k-12}F_k (\bx(s))\pi^\infty_{k-12} h_{k-12} \right|_0 & \leq  \sum_{(n,m) \in \Z \times \N} |(\pi^{\infty}_{k-12} h_{k-12})_{n,m}|  |\sigma_0|^{m} \bar{\theta}^n \\
		&\leq \epsilon^{(u,\gamma)} \| \pi^\infty_{k-12} h_{k-12} \|_{X_{k-12}} \leq  \epsilon^{(u,\gamma)}.
\end{align*}
Let $e_k^\ell \in X_k $ defined element-wise by
\begin{align*}
	\left(e_k^\ell \right)_n =  \left\{
	\begin{tabular}{cl}
		$1$ & if $ n = \ell $ \\
		$0$ & otherwise.
	\end{tabular}
	\right.
\end{align*}
Then, we can bound
\begin{equation} \label{eq:ZN_kj_in_J_3}	
\left\| \pi^{N_i}  A_{i,k}(s) \pi^{N_k}  D_{k-12}F_k (\bx(s))\pi^\infty_{k-12} h_{k-12} \right\|_{X_{i} } \leq  
Z_{i,k,k-12}^N \bydef \left\| \pi^{N_i}  A_{i,k}(s) \pi^{N_k} e_k^0 \right\|_{X_i} \epsilon^{(u,\gamma)}.
\end{equation}
Similarly, for $ j = k$, we can compute
\begin{equation} \label{eq:ZN_kj_in_J_4}
	\left\| \pi^{N_i}  A_{i,k}(s) \pi^{N_k}  D_{k}F_k (\bx(s))\pi^\infty_{k} h_{k} \right\|_{X_{k} } 
	\le Z_{i,k,k}^N \bydef \frac{	\left\| \pi^{N_i}  A_{i,k}(s) \pi^{N_k} e_k^0 \right\|_{X_i}}{\nu_a^{N_a+1}}.
\end{equation}
Recall the bounds $Z^{\infty}_{i,j}$ for $(i,j) \notin \mathcal{I}$ in \eqref{eq:Zinfty_ij_not_in_I} and for $(i,j) \in \mathcal{I}$ in \eqref{eq:Zinfty_ij_in_I_1} and \eqref{eq:Zinfty_ij_in_I_2}, and recall the bounds $Z_{i,k,j}^{N}$ for $(k,j) \notin \mathcal{J}$ in \eqref{eq:ZN_kj_not_in_J} and for $(k,j) \in \mathcal{J}$ in \eqref{eq:ZN_kj_in_J_1}, \eqref{eq:ZN_kj_in_J_2}, \eqref{eq:ZN_kj_in_J_3} and \eqref{eq:ZN_kj_in_J_4}, and let
\begin{align*}
	Z^{i,j}_{1} \bydef Z_{i,j}^{\infty} +  \sum_{k = 0}^{29} Z_{i,k,j}^{N}.
\end{align*}
Using this representation, we can establish the following bound
\begin{equation} \label{perron}
	\left\| A(s)[ DF (\bx(s)) - A^\dagger(s) ] \right\|_{B(X)} \leq Z_1 \bydef \max_{i = 1,\dots,29} \left( \mu_i \sum_{j = 1}^{29} \frac{Z^{i,j}_{1}}{\mu_j} \right).
\end{equation}
Among all the bounds in Theorem~\ref{thm:radii_polynomial}, the bound $Z_1$ is the most challenging to compute. For the theorem to be successfully applied, it is necessary that $Z_1 < 1$. Due to the structure of the problem, certain components $Z^{i,j}_1$ are easier to estimate than others. We now proceed to explain the definition of the weights $\mu_i$.  In practice, one could simply set $\mu_i = 1$ for all $i$ and improve the bound by increasing the number of coefficients used in the Chebyshev expansion. However, this approach significantly increases both computational time and memory usage. An alternative approach involves selecting optimal weights $\mu_i$ using the Perron-Frobenius theorem. While this method can minimize the bound $Z_1$, it will worsen the other bounds in the radii polynomial $p(r)$ defined in \eqref{radii_poly}. In general, if $Y_0$ is already close to machine precision, this trade-off is acceptable. However, in our setting, where we limit the number of Chebyshev coefficients due to memory constraints, this degradation can become significant, especially on more complex segments, ultimately causing the proof to fail. To address this issue, we enforce a lower bound on $Z_1$ and reformulate \eqref{perron} as a constrained optimization problem. This problem can be solved using the nonlinear programming solver JuMP.jl \cite{Lubin2023}. Hence, once a set of weights $\mu_j$ is determined, we can rigorously compute with interval arithmetic the bound $Z_1$ satisfying \eqref{perron}.
%we can find a set of weights $\mu_i$ that minimize (\refeq{perron}) by finding the eigenvectors $\xi_Z = \{ \xi_Z^1, \dots \xi_Z^{29} \} $ of the matrix 
%\begin{align}\label{matrix_Z_bound}
%		\begin{pmatrix}
%		Z_{1}^{1,1} & \cdots & Z_{1}^{1,29} \\
%		\vdots & \ddots & \vdots \\
%		Z_{1}^{29,1} & \cdots & Z_{1}^{29,29} 
%	\end{pmatrix}
%\end{align}
%associated with the largest positive real eigenvalue and setting  $\mu_i = \frac{1}{\xi_Z^i}$. In practice, since the matrix (\refeq{matrix_Z_bound}) is not nonzero, it is possible for the eigenvectors $\xi_Z$ to have  zero components. In this case, we assign the weights manually to large enough values to ensure that the proof holds. Although this choice of weights may influence the other bounds, if the numerical approximation $\bx(s)$ is sufficiently close to the true solution, the bounds $Y_0$ and $Z_0$ should remain small enough for Theorem \ref{thm:radii_polynomial} to hold.
%\begin{align*}
%		\max_{i = 1,\dots,29} \left( \mu_i \sum_{j = 1}^{29} \frac{Z^{i,j}_{1}}{\mu_j} \right) \bydef Z_1.
%\end{align*}

% Z_2
\subsubsection{The bound \boldmath$Z_2$\unboldmath}

In this final section, we provide an explicit computational strategy for estimating the bound $Z_2$ appearing in \eqref{bound_Z_2}, which involves bounding the operator norm $\| A(s) [ DF(c) - DF( \bx(s)) ] \|_{B(X)}$ for an arbitrary $c \in  \overline{ B_{r}(\bx(s))}$. To simplify the notation, we define the operator 
\begin{align*}
	B(s) = \begin{pmatrix}
		B_{1,1}(s) & \cdots & B_{1,29}(s) \\
		\vdots & \ddots & \vdots \\
		B_{29,1}(s) & \cdots & B_{29,29}(s) \\
	\end{pmatrix} \quad \mbox{ with } \quad B_{i,j}(s) \bydef D_jF_i(c) -  D_jF_i(\bx(s)).
\end{align*}
Then
$\| A(s)B(s) \|_{B(X)} \leq  \max_{i = 1,\dots,29} \mu_i \sum_{j = 0}^{29}  \frac{1}{\mu_j} \| \left( A(s)B(s)\right)_{i,j} \|_{B(X_j,X_i)}$, 
with
\[
\| \left( A(s)B(s) \right)_{i,j} \|_{B(X_j,X_i)} = \sup_{\|h_j\|_{X_j} \leq 1 } \| \left( A(s)B(s) \right)_{i,j} h_j \|_{X_i}
	 \leq \sup_{\|h_j\|_{X_j} \leq 1 } \sum_{k= 0}^{29} \| A_{i,k}(s)B_{k,j}(s)  h_j \|_{X_i}.
\]
The analysis of each term $\| A_{i,k}(s)B_{k,j}(s)  h_j \|_{X_i}$ follows a similar structure across most indices, with a few notable exceptions that require a more detailed examination. We denote the set of such exceptional index by $ \mathcal{Q}$ define as
\[
	\mathcal{Q} = \left\{ \left. (i,k,j) \in \N^3  ~ \right| ~ i \in \{11,12,13,14,26,27,28,29\} \mbox{ and } i = k = j~ \right\}.
\]
These indices correspond to the cases when the operator $ D_j F_k (c)   -  D_j F_k (\bx(s) )$ is unbounded and we need the tail of $ \Lambda_{k}^{-1}(s)$ to balance it. First, we consider the case $ (i,k,j) \notin \mathcal{Q}$, we bound
 \begin{align*}
 	 \|  A_{i,k}(s)B_{k,j}(s)  h_j \|_{X_i} \leq \| A_{i,k}(s) \|_{B(X_j, X_k) } \| B_{k,j}(s)  h_j \|_{X_k},
 \end{align*}
 where the operator $A(s)$ is bounded by
 \begin{align*}
 	\| A_{i,k}(s) \|_{B(X_k, X_i) } \leq \max\left( \left\| \pi^{N_i} A_{i,k}(s) \pi^{N_k} \right\|_{B(X_k, X_i) }  , \delta_{i,k} \overline{ \Lambda}_{i}^{-1} \right),
 \end{align*}
 and this computation is finite and can be done rigorously.  As we already discuss in the computation of the $Z_1$ bound, our map $F$ has a polynomial structure component-wise. We will use that properties to bound the term $\| B_{k,j}(s)  h_j \|_{X_k}$. As we did with the bound $Z_1$, the nonzero element of the result of the operator $B_{k,j}(s)$ acting on $h_j$ can written as the result of the convolution $\by_{k,j}(s) *h_j$ and thus the norm can be bounded by
 \begin{align*}
 	  \| B_{k,j}(s)  h_j  \|_{X_k} \leq  \|\by_{k,j}(s) *h_j  \|_{X_k}  \leq  \|\by_{k,j}(s) \|_{X_k} \| h_j  \|_{X_j}\leq  \|\by_{k,j}(s) \|_{X_k}  .
 \end{align*}
  There exists a $t \in X$ with $  \| t \|_{X} \leq 1$ satisfying $c = \bx(s) + r t$, thus for any $\ell \in \N$, and using the binomial theorem, we can bound
  \begin{align*}
  \left\|  c_i^\ell - (x_i(s))^\ell \right\|_{X_i} & = \left\|  (x_i(s) + rt_i)^\ell - (x_i(s))^\ell \right\|_{X_i} 
  = \left\| \sum_{n = 0}^\ell \binom{\ell}{n} (x_i(s))^{n}  (rt_i)^{\ell-n} - (x_i(s))^\ell \right\|_{X_i} \\
  &= \left\| \sum_{n = 0}^{\ell-1} \binom{\ell}{n} (x_i(s))^{n}  (rt_i)^{\ell-n}  \right\|_{X_i} 
  \leq \sum_{n = 0}^{\ell-1} \binom{\ell}{n} \left\|x_i(s) \right\|^{n}  \left\|rt_i\right\|^{\ell-n}_{X_i} \\
  & \leq \sum_{n = 0}^{\ell-1} \binom{\ell}{n} \left\|x_i(s) \right\|^{n}  \left( \frac{r}{\mu_i}\right)^{\ell - n} 
   = \sum_{n = 0}^{\ell} \binom{\ell}{n} \left\|x_i(s) \right\|^{n}  \left( \frac{r}{\mu_i}\right)^{\ell - n} - \| x_i(s) \|_{X_i}^\ell \\
  & = \left(  \| x_i(s) \|_{X_i}+ \frac{r}{\mu_i} \right)^\ell - \| x_i(s) \|_{X_i}^\ell,
  \end{align*}
	which is a polynimal of $r$ that is at least linear if not trivial. Using this result, we can rigorously compute a bound over $\| \by_{k,j}(s) \|_{X_k}$. For $(i,k,j) \in \mathcal{Q}$, we split the equation by 
\begin{align*}
	\| A_{i,i}(s) B_{i , i}(s) \|_{B( X_i) }  &= \left \| \pi^{N_i} A_{i,i}(s) \pi^{N_i} B_{i , i}(s) + \pi^{\infty}_i A_{i,i}(s) \pi^{\infty}_i B_{i , i}(s) \right\|_{B( X_i) } \\
	&\leq \left\| \pi^{N_i} A_{i,i}(s) \pi^{N_i} B_{i , i}(s) \right\|_{B( X_i) } + \left\| \pi^{\infty}_i A_{i,i}(s) \pi^{\infty}_i B_{i , i}(s) \right\|_{B( X_i) } \\
	&\leq \left\| \pi^{N_i} A_{i,i}(s) \pi^{N_i} \right\|_{B(X_i) } \left\|   \pi^{N_i} B_{i , i}(s) \right\|_{B( X_i) } + \left\| \pi^{\infty}_i A_{i,i}(s) \pi^{\infty}_i B_{i , i}(s) \right\|_{B(X_i) } .
\end{align*}
The first term is bounded similarly to the case $(i,k,j) \notin \mathcal{Q}$, by bounding the finite nonzero element of $ \pi^{N_i} B_{i , i}(s) h_i $. For the second term, when $ i = 11,12,13,14 $, we have 
\begin{align*}
	\| \pi^{\infty}_i A_{i,i}(s) \pi^{\infty}_i B_{i , i}(s) \|_{B( X_i) } &= \sup_{\|h\|_{X_i} \leq 1} \| \pi^{\infty}_i A_{i,i}(s) \pi^{\infty}_i B_{i , i}(s) h_i \|_{ X_i } \\
	&= \sup_{\|h\|_{X_i} \leq 1}  \sum \frac{1}{| in + \bx_6(s)m |} \left| m (c_6 - \bx_6(s)) (h_{i})_{n,m} \right| \nu_v^{|n|+m} \\
	&\leq \sup_{\|h\|_{X_i} \leq 1}  \sum \frac{1}{| \bx_6(s) |} \frac{r}{\mu_6}  \left|  (h_{i})_n \right| \nu_v^{|n|} 
	 \leq \frac{r}{\mu_6} \left(  \sup_{s \in [-1,1]} \frac{1}{| \bx_6(s) |} \right) .
\end{align*}
Similarly, when $ i = 26,27,28,29 $, we get
\begin{align*}
	\left\| \pi^{\infty}_i A_{i,i}(s) \pi^{\infty}_i B_{i , i}(s) \right\|_{B( X_i) } &\leq \frac{r}{\mu_{25}} \max_{\ell = 1, 6} \left( \sup_{s \in [-1,1 ]}  \frac{1}{|\bx_{25,\ell}(s) |}\right).
\end{align*}
Thus, we have all the components needed to compute the bound $Z_2(r)$.

%=========================================================================
% APPLICATION TO SH AND GS
%=========================================================================	
%!TEX root = main.tex
\section{Applications} \label{sec:application}
In this section, we present several applications of the framework developed in this work. We first address key numerical challenges and outline the strategies implemented to improve computational performance. We then investigate the cubic Swift–Hohenberg equation, proving the existence of both snaking and isolas. Finally, we apply our approach to the Gray–Scott equation, where we rigorously verify the presence of snaking behavior. All computations in this study were conducted on Windows 11, AMD Ryzen 9 7950X, 64GB DDR5 DRAM 6400MT/s using Julia 1.11.1 with RadiiPolynomial.jl v0.8.24 \cite{RadiiPolynomial}. The code used to generate the results presented in the following sections is freely
available from \cite{Duchesne}.

% Snaking
\subsection{Optimizing computational time and memory usage}\label{sec:opti}
Given that the majority of our computations rely on interval arithmetic, it is crucial to optimize these operations to mitigate computational overhead. Let $\bx(s) \in X$ denote a vector-valued function whose components are represented by Chebyshev polynomials of degree $N_s$, as described in Section~\ref{sec:data_representation}. 
For a fixed $s \in [-1,1]$, let $\mathcal{N} \in \N$ such that the projection $\pi_{{N}} \bx(s)$ lies in $\mathbb{K}^{\mathcal{N}}$. Each component in this projection requires $N_s + 1$ data points to store the coefficients of its finite Chebyshev expansion. Consequently, the full representation of the approximate solution $\bx(s)$ can be stored as a matrix of size $\mathcal{N} \times (N_s + 1)$. Converting this data grid into the polynomial representation $\bx(s)$ requires applying a discrete Fourier transform (DFT), which has a computational complexity of $\mathcal{O}(N_s^2)$. To improve efficiency, we aim to use the Fast Fourier Transform (FFT) instead. %The first step involves rewriting the Chebyshev expansion in terms of a Fourier series by substituting $s = \cos(t)$:
%For example, let $\bx(s) \in X$ be a Chebyshev polynomial of order $N_s$ element-wise as described in Section \ref{sec:data_representation}, such that $ \pi_\mathcal{N} \bx(s) \in \mathbb{K}^{\mathcal{N}}$ for fixed $s \in [-1,1]$. Then, to represent each element of the vector $ \pi_\mathcal{N}$, we need at least $N_s +1$ data points to represent the finite Chebyshev expansion. Hence, the approximate solution  $\bx(s)$, can be store as a $ \mathcal{N} \times N_s +1$ matrix. To convert this data grid into the polynomial form $\bx(s)$ will required the use of DFT algorithm with complexity  $ \mathcal{O}(N_s^2)$. We want to use the FFT algorithm to convert the data grid into a polynomial more efficiently. The first step is to write the Chebyshev expansion as a Fourier series, let $s = \cos(t)$, then 
%\begin{align*}
%	\bx(s) &= \mx_0 + 2 \sum_{ n = 1 }^{N_s} \mx_n T_n(s) ,\\
%	&= \mx_0 + 2 \sum_{ n = 1 }^{N_s} \mx_n \cos ( n \arccos (s)   ), \\
%	&= \mx_0 + 2 \sum_{ n = 1 }^{N_s} \mx_n \cos ( n t   ) ,\\
%	&= \mx_0 + 2 \sum_{ n = 1 }^{N_s} \mx_n \frac{e^{int} + e^{-int}}{2},\\
%	&= \sum_{ n = -N_s }^{N_s} \mx_{|n|} e^{int} .
%\end{align*}
%Thus, the Chebyshev expansion corresponds to an even Fourier series. 
To apply the FFT algorithm with the RadiiPlynomial.jl package, the data grid must be symmetric and its length must be a power of two (see the documentation of \cite{RadiiPolynomial} for more details). To construct such a data grid, we define:
\begin{align*}
	N_{\fft} \bydef 2^{ \lceil \log_2 (2N_s+1) \rceil } \quad \mbox{ and } \quad n_{\fft} \bydef N_{\fft} / 2 +1 .
\end{align*}
Here, $N_{\fft}$ is the total size of the grid used for the FFT to recover the Chebyshev coefficients, and $n_{\fft}$ is the number of unique data points required to construct the symmetric grid. Given $n_{\fft}$ values ${ \bx^1, \ldots, \bx^{n_{\fft}} }$, we define the full FFT input grid as:
\begin{align*}
	\bx_{\fft} =  \{ \bx^{n_{\fft}} , \bx^{n_{\fft}-1 }, \dots,\bx_2,  \bx_1 , \bx_2,  \dots, \bx^{n_{\fft}-1}  \}.
\end{align*}
This symmetric grid of $N_{\fft}$ points is suited for the FFT algorithm, which yields the Chebyshev coefficients with a computational complexity of $\mathcal{O}(N_{\fft} \log N_{\fft})$, significantly improving over the $\mathcal{O}(N_s^2)$ cost of a direct DFT. While this approach increases memory usage, the gain in computational speed makes it advantageous. 

In practice, computing the entire solution branch within a single segment is numerically challenging. Therefore, we will partition the branch into multiple segments. To ensure that each segments are connected and that the branch is periodic we need to impose some restriction on the endpoints. Suppose that a branch is approximated by $K$ segments $\{ \bx^{(k)} (s)  \}_{k = 1}^K$ with approximate $K$ tangent vector segments $\{ \eta_s^{(k)}(s)\}$, each parameterized over $s \in [-1,1]$, then we impose
\begin{equation} \label{joining_condition}
	\bx^{(k)} (1)=
	\begin{cases}
		\bx^{(1)}(-1), & \text{for } k = K,\\
		\bx^{(k+1)}(-1), & \text{otherwise}
	\end{cases}
	\quad \text{and} \quad
		\eta_s^{(k)} (1)=
	\begin{cases}
		\eta_s^{(1)}(-1), & \text{for } k = K,\\
		\eta_s^{(k+1)}(-1), & \text{otherwise}
	\end{cases}
\end{equation}
%such that each one is represented by a grid of $M_k$ data points $\left\{ \bx^{(k)}_1,\dots, \bx^{(k)}_{M_k}  \right\}$. Then we impose that 
%\begin{equation} \label{joining_condition}
%\bx^{(k)}_{M_k} =
%	\begin{cases}
%		\bx^{(1)}_1, & \text{for } k = K,\\
%		\bx^{(k+1)}_1, & \text{otherwise}.
%	\end{cases}
%\end{equation}
This condition guarantees that each segment starts exactly where the previous one ends, and, recalling \eqref{eq:ZFP}, that the resulting maps $\{ F^{(k)}(x,s) \}_{k=1}^K$, with first component $Q^{(k)}_s(x,s) \bydef \left( x -\bx^{(k)}(s) \right) \cdot \eta^{(k)}_s(s)$ also match at the segment boundaries. Consequently, the zero-finding problem solved at the end of one segment is consistent with that used to initialize the next. From that construction, we obtain automatically the smoothness at the intersection of the segment (e.g. see \cite{Breden2015}). 
\begin{figure}[H]
	\center
	\includegraphics[scale = 0.25]{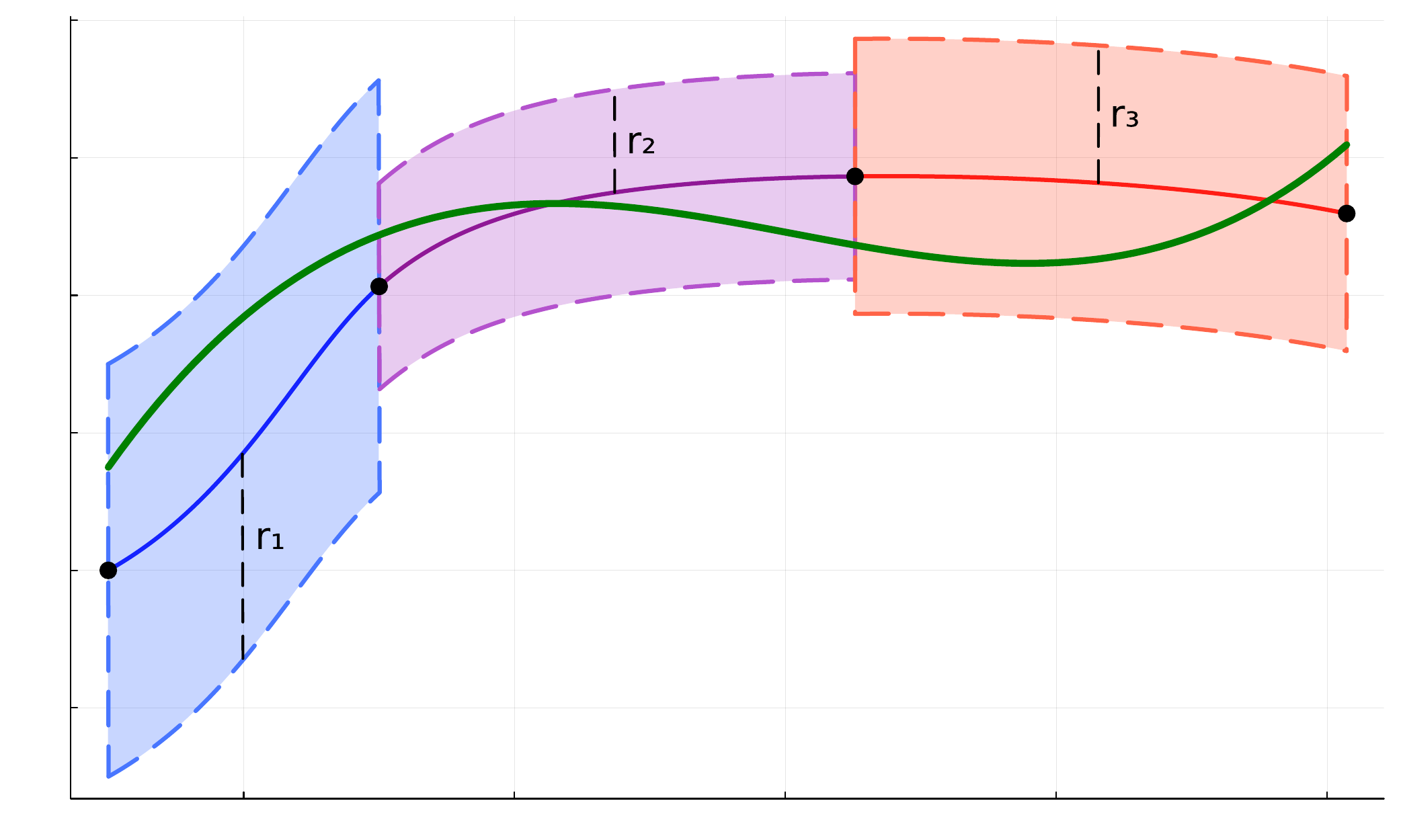} 
	\vspace{-.25cm}
	\caption{{\small When successful, the radii polynomial approach yields a validated neighborhood, a {\em tube} of radius $r_i > 0$, centered around the numerical approximation of each segment (illustrated as the shaded region), within which the true solution curve (shown in green) is guaranteed to exist.}} \label{application_plot_rad}
\end{figure}
Before turning to the specific applications to the Swift–Hohenberg and Gray–Scott equations, we first consolidate the general framework and key assumptions underlying our approach into a formal result, which provides a unifying statement that encapsulates the core analytical and computational components used throughout this work.
\begin{lem}\label{lem:final}
For $s \in [-1,1]$, let $\left\{ F^{(k)}(x, s) \right\}_{k = 1}^K$, where $F^{(k)} : X \times [-1,1] \rightarrow X$ is of the form \eqref{eq:ZFP}, constructed from the system \eqref{Second Order System} and whose first component is defined using a list of parameterized curves $\bx^{(k)}(s) \in X$ and tangent vectors $\eta_s^{(k)}(s) \in X$ satisfying the connecting condition \eqref{joining_condition}. Assume moreover that $\eta_s^{(k)}(s)$ satisfy the structural condition of Corollary~\ref{cor_cond_smooth} and that for each $k$, there exists a linear operator $(A^\dagger)^{(k)}(s) \in B(X)$, an injective operator $A^{(k)}(s) \in B(X)$, and bounds $Y_0^{(k)}$, $Z_0^{(k)}$, $Z_1^{(k)}$, and $Z_2^{(k)}$ such that the hypotheses of the Newton-Kantorovich Theorem~\ref{thm:radii_polynomial} are satisfied. In particular, assume that the associated radii polynomial \eqref{radii_poly} is negative at some radius $r_0^{(k)} > 0$, and that the smoothness condition \eqref{cond_smooth} holds for every segment. Finally assume that Hypothesis~\ref{hyp:hyperbolic} is satisfied. Let $ \tilde{\theta}:[0,1] \rightarrow \R$ denote the unique lifting of the phase. If $\tilde{\theta}(1) \neq \tilde{\theta}(0)$, then for large $L$ there are localized patterned states whose $(\alpha,L)$ bifurcation diagram is described by two interlaced snaking curves. If $\tilde{\theta}(1) = \tilde{\theta}(0)$, then for large $L$ there are localized patterned states whose $(\alpha,L)$ bifurcation diagram is described by an infinite stack of isolas.
\end{lem}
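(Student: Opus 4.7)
The plan is to combine the rigorous per-segment validation with the Forcing Theorem~\ref{thm:forcing_theorem}. On each segment $k \in \{1,\dots,K\}$, the assumed hypotheses permit a direct application of the Newton-Kantorovich Theorem~\ref{thm:radii_polynomial}: the existence of $r_0^{(k)} > 0$ with $p(r_0^{(k)}) < 0$ yields a $C^1$ map $\tilde{x}^{(k)}:[-1,1] \to X$ satisfying $F^{(k)}(\tilde{x}^{(k)}(s), s) = 0$ with $\|\tilde{x}^{(k)}(s) - \bx^{(k)}(s)\|_X \le r_0^{(k)}$. The smoothness condition \eqref{cond_smooth} together with Theorem~\ref{thm:smoothbranchaposteriori} provides $\frac{d\tilde{x}^{(k)}}{ds}(s) \neq 0$, and the structural assumption on $\eta_s^{(k)}$ allows Corollary~\ref{cor_cond_smooth} to be invoked to conclude that the patterned-front subcomponent $\mathcal{S}_s^{(k)}(s) \neq 0$ along each segment.

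Next, I would assemble the segments into a closed loop. The joining conditions \eqref{joining_condition} imply $\bx^{(k)}(1) = \bx^{(k+1)}(-1)$ (cyclically) and the corresponding matching of tangent vectors, so the maps $F^{(k)}$ and $F^{(k+1)}$ coincide at the seam and the validation tubes overlap there. The local uniqueness clause of Theorem~\ref{thm:radii_polynomial} then forces $\tilde{x}^{(k)}(1) = \tilde{x}^{(k+1)}(-1)$, so concatenation produces a continuous map $\mathcal{S}:\R/\Z \to \mathcal{X}$. Because both the zero set and the affine constraint $Q_s$ agree at each seam, and because the non-vanishing of $\frac{d\tilde{x}^{(k)}}{ds}$ guarantees that the implicit function theorem applies on both sides, the concatenated loop inherits $C^1$ regularity across the seams, as already noted in the text.

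I would then verify that each $\tilde{x}^{(k)}(s)$ is a regular patterned front in the sense of Definition~\ref{defn:regular patterned front}. By the construction of $f$ in \eqref{eq:zero_finding_f}, a zero of $f(\alpha,y)=0$ simultaneously produces: a symmetric periodic orbit $\gamma^*$ (from the cosine/sine Fourier structure used in Section~\ref{sec:PO} and $\Gamma=0$); a non-degenerate unstable Floquet exponent $\lambda^*$ and bundle $v$ (from $V=0$); a Fourier-Taylor parameterization of $W^u_{\rm loc}(\Gamma)$ (from $W=0$); the stable eigenpairs at the origin provided by Hypothesis~\ref{hyp:hyperbolic} and captured by $E=0$; the parameterization of $W^s_{\rm loc}(0)$ (from $P=0$); and an orbit segment matching the two local manifolds (from $G, H, J=0$). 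By Remark~\ref{rem:transversality}, the invertibility of $D_xF^{(k)}(\tilde{x}^{(k)}(s), s)$, which follows from the successful Newton-Kantorovich argument, translates into transverse intersection of $\widetilde{W}^{cu}_{\rm loc}(u^*_{\rm per};\alpha^*)$ and $\widetilde{W}^{cs}_{\rm loc}(0;\alpha^*)$, giving regularity.

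Thus Hypothesis~\ref{hyp:regular_patterned_front} is verified by the $C^1$ loop $\mathcal{S}$ of regular patterned fronts with $\mathcal{S}_s \neq 0$, and combined with Hypothesis~\ref{hyp:hyperbolic} the Forcing Theorem~\ref{thm:forcing_theorem} applies, yielding the dichotomy governed by $\tilde{\theta}(1)-\tilde{\theta}(0)$. The main obstacle I anticipate is the gluing step: one must ensure that the validated tubes at each seam overlap in a way that forces uniqueness of the rigorously validated solution across adjacent segments, and that this matching is compatible with $C^1$ regularity of the parameterization (including at any seams near fold points where the conditioning of the Jacobian is worst). Once uniqueness and the tangent conditions of Corollary~\ref{cor_cond_smooth} are combined, the rest reduces to citing the forcing result.
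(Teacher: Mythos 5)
Your proposal is correct and follows essentially the same route as the paper's proof: apply Theorem~\ref{thm:radii_polynomial} segment-wise, use the joining condition \eqref{joining_condition} to glue the validated branches into a smooth closed loop, invoke Remark~\ref{rem:transversality} for regularity of the patterned fronts and Corollary~\ref{cor_cond_smooth} for $\mathcal{S}_s \neq 0$, and then conclude via the Forcing Theorem~\ref{thm:forcing_theorem}. Your treatment of the seam-matching via local uniqueness is in fact slightly more explicit than the paper's, which simply asserts the smooth closed loop from the connecting condition (citing \cite{Breden2015}).
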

\begin{proof}
By the Newton-Kantorovich Theorem, for each map $F^{(k)}(x(s), s)$, there exists a unique solution $\tilde{x}^{(k)}(s) \in \overline{B_r(\bx^{(k)}(s))}$ satisfying $F^{(k)}\left(\tilde{x}^{(k)}(s), s\right) = 0$ for all $s \in [-1,1]$. Since both the sequence of maps and the sequence of numerical approximations satisfy the connecting condition \eqref{joining_condition}, the full solution curve $\cup_{k = 1}^K ( \cup_{s\in[-1,1]} \tilde{x}^{(k)}(s) )$ forms a smooth closed loop. Along this loop, the parameter $\alpha$ is identified with the component $(x^{(k)}(s))_1$ and therefore remains within a closed interval. %Moreover, by the construction of the maps \eqref{eq:map_eigenpair} and \eqref{eq:map_stable_manifold}, \jp{we have rigorously verified that the origin is a hyperbolic equilibrium by tracking the unstable eigenvalues along the loop (this should be removed and stated in the assumption of the Lemma that we assume that Hypothesis 1 holds)}. Consequently, Hypothesis~\ref{hyp:hyperbolic} is satisfied.
By construction, for each point on the solution curve, there exists a patterned front and an associated phase, as defined in Definition~\ref{dfn:patternedfront}, which are subsets of the solution at that point. Let $\mathcal{S}(\ell)$ and $\tilde{\theta}(\ell)$, with $\ell \in [0,1]$, denote parameterizations of the patterned front and the lifted phase, respectively, along the loop. The existence of the solution loop established by the Newton-Kantorovich Theorem guarantees that $\mathcal{S}(\ell)$ is a regular patterned front for all $\ell \in [0,1]$ (see Remark~\ref{rem:transversality}). 
Finally, by Corollary~\ref{cor_cond_smooth}, we have $\mathcal{S}_\ell(\ell) \neq 0$ for all $\ell \in [0,1]$, ensuring that Hypothesis~\ref{hyp:regular_patterned_front} is satisfied. Since by assumption, Hypothesis~\ref{hyp:hyperbolic} is satisfied, all the hypotheses of the Forcing Theorem~\ref{thm:forcing_theorem} are met, which yields the desired result.
\end{proof}

\subsection{Snaking and stacked isolas in the Swift-Hohenberg equation}
The first example we consider is the cubic Swift–Hohenberg equation, given by
\begin{align*}
	U_t = - (  1 + \partial_x^2  )^2 U - \alpha U +  2 U^3 - U^3, \quad x \in \R.
\end{align*}
Stationary solutions of this equation can be reformulated as a second-order system by
\begin{equation} \label{Second Order System_SH}
	\left\{
	\begin{aligned}
		u'' &= v \\ 
		v'' &=- (\alpha + 1 ) u  + 2u^2 - u^3 -2 v,
	\end{aligned}
	\right. 
\end{equation}
which clearly fits the structure of equation \eqref{Second Order System}. Snaking and isolas in the Swift–Hohenberg equation have been studied in \cite{Beck2009, Aougab2017}, where the authors leverage a conserved quantity to reduce the problem to a three-dimensional zero level set. Building on this foundational work, the framework developed here establishes global existence using the same forcing theorem, but extends the approach by removing the reliance on a conserved quantity and accommodating both orientable and non-orientable cases in a unified manner.
\subsubsection{Snaking in Swift-Hohenberg}
We begin by considering the case where the unstable manifold associated with the periodic orbit is orientable. 
The first step in the process is to compute a numerical candidate that serves as a starting point for the pseudo-arclength continuation. While various numerical methods may be employed, the structure of the radii polynomial approach makes Newton’s method particularly effective for approximating a zero of the map \eqref{eq:zero_finding_f} at a fixed parameter $\alpha$. The main challenge lies in selecting a suitable initial guess to ensure the convergence of Newton's method. In this work, we begin by computing the local unstable manifold of the steady state at the origin, and then apply a shooting method via numerical integration from the boundary of this manifold to obtain a numerical approximation of an orbit connecting to a periodic solution. This yields an approximate heteroclinic orbit that serves as the initial guess for Newton’s method (see Figure~\ref{application_SH_snake_hetero}). 

\begin{figure}[H]
	\center
	%\includesvg{plot_hetero_SH_snake.svg} 
	\includegraphics[scale = 0.31]{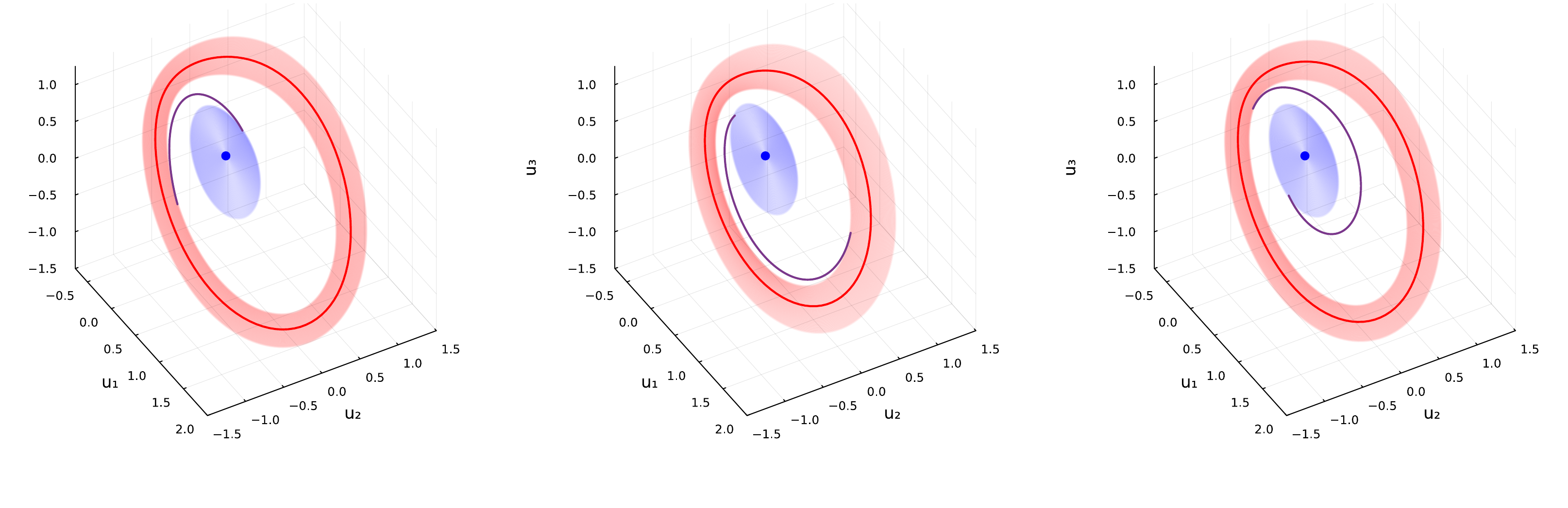} 
	\vspace{-.8cm}
	\caption{{\small Three-dimensional projection of the heteroclinic connection for the Swift–Hohenberg equation \eqref{Second Order System_SH} for $\alpha=0.35$ (left), $\alpha=0.4933$ (center) and $\alpha=0.414$ (right). The figure displays the periodic orbit (solid red) along with its associated local unstable manifold (shaded red), the origin (solid blue) with its associated local stable manifold (cyan), and the heteroclinic orbit connecting the two manifolds (purple).}} \label{application_SH_snake_hetero}
\end{figure}

Once the numerical approximation $\bx(s)$ is defined, we apply the results from Section~\ref{sec:bounds} to compute the bounds required for the Newton-Kantorovich Theorem using interval arithmetic. This allows us to determine a validated radius of existence $r_0$ for each segment. The outcomes of these computations are summarized in Table~\ref{table_SH_snake}. Each segment required approximately 50 minutes to validate, using the weights $\nu_\gamma = \nu_v = \nu_w = \nu_p = 1.4$ and $\nu_a = 1.1$. Each segment approximation $\bx(s)$ are Chebyshev polynomial of order 15. It is worth noting that the validation of individual segments can be performed in parallel, significantly reducing total computational time.
\begin{table}[H]
	\center
	{\scriptsize
	\begin{tabular}{|c|c|c|c|c|c|c|} 
		\hline
		\rowcolor[HTML]{C0C0C0} 
		\# & Arclength & $r_0$ & $Y_0$ & $Z_0$ & $Z_1$ & $Z_2(r)$ \\ \hline
		1 & $ 1.5036 $ & $ 1.2975 \times 10^{ -6 } $ & $ 4.722 \times 10^{ -7 } $ & $ 1.2968 \times 10^{ -4 } $ & $0.6255$ & $8043 + \mathcal{O}(r)$ \\ \hline
		2 & $ 2.5082 $ & $ 1.6996 \times 10^{ -6 } $ & $ 4.1577 \times 10^{ -7 } $ & $ 9.9584 \times 10^{ -2 } $ & $0.6503$ & $3238 + \mathcal{O}(r)$ \\ \hline
		3 & $ 2.5071 $ & $ 1.0954 \times 10^{ -7 } $ & $ 4.1783 \times 10^{ -8 } $ & $ 5.1894 \times 10^{ -2 } $ & $0.5664$ & $1966 + \mathcal{O}(r)$ \\ \hline
		4 & $ 2.0065 $ & $ 4.3896 \times 10^{ -7 } $ & $ 2.0217 \times 10^{ -7 } $ & $ 1.174 \times 10^{ -2 } $ & $0.5267$ & $2229 + \mathcal{O}(r)$ \\ \hline
		5 & $ 2.5086 $ & $ 1.8653 \times 10^{ -6 } $ & $ 8.1174 \times 10^{ -7 } $ & $ 3.903 \times 10^{ -3 } $ & $0.55$ & $5835 + \mathcal{O}(r)$ \\ \hline
		6 & $ 1.6264 $ & $ 2.2221 \times 10^{ -6 } $ & $ 9.2927 \times 10^{ -7 } $ & $ 8.1295 \times 10^{ -6 } $ & $0.5632$ & $8374 + \mathcal{O}(r)$ \\ \hline
	\end{tabular} 
	}
\caption{{\small Data for the computer-assisted proof of snaking in the Swift-Hohenberg equation \eqref{Second Order System_SH}.}} \label{table_SH_snake}
\end{table} 
Our results show that the continuation branch of the heteroclinic orbit is periodic and confined to a closed interval of the parameter $\alpha$. To confirm the presence of snaking behavior, we rigorously verify that condition~\eqref{cond_smooth} holds and that the lifted phase $\tilde{\theta}$, defined implicitly from $\theta_1(s) + \mi \theta_2(s) = e^{\mi \theta(s)}$, is not homotopic to a constant function. These verifications are performed using interval arithmetic.  We refer to Figure~\ref{application_SH_snake} for a graph of the parameter $\alpha(s)$ and $\theta(s)$.
\begin{figure}[H]
\center
\includegraphics[scale = 0.18]{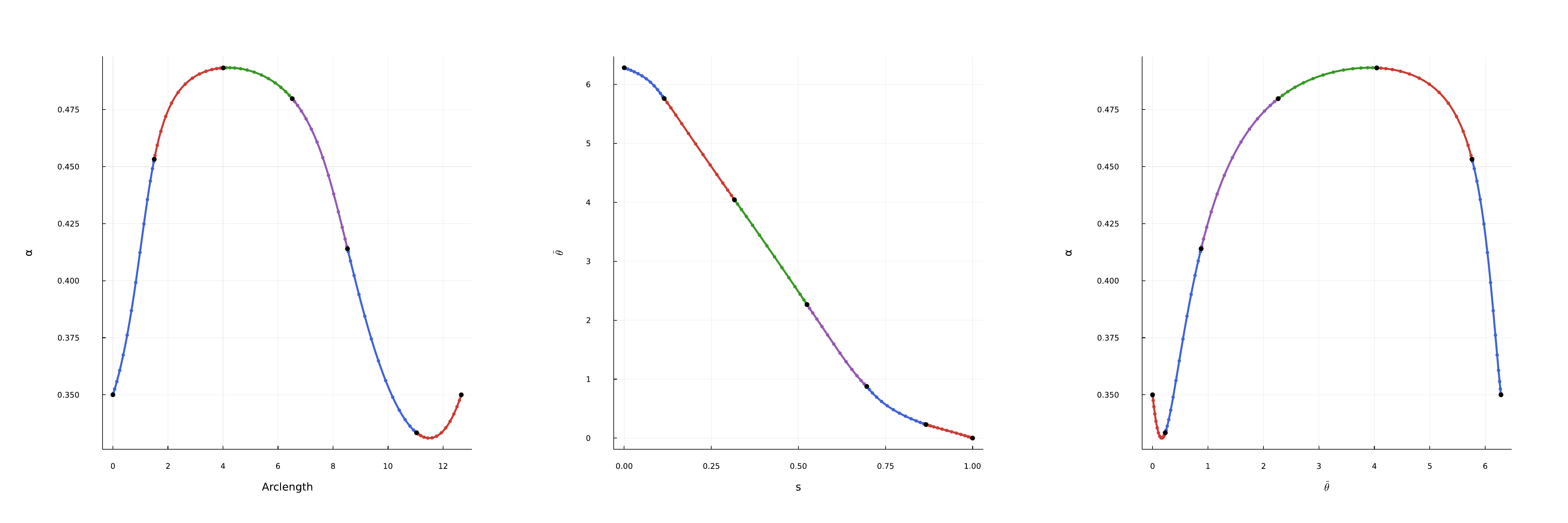} 
\vspace{-.5cm}
\caption{{\small Graphs of the parameter $\alpha$ (left), plotted with respect to its arclength parameterization, and of the angle $\theta$ (center), displayed along the continuation branch. The right panel shows a continuation diagram of the heteroclinic orbit, illustrating the variation of $\alpha$ with respect to $\theta$, as sketched in Figure~\ref{fig:pattern_hetero}. Coloured dots represent the grid points used to approximate the solution $\bx(s)$, while black dots indicate the data points connecting adjacent segments. The parameter $\alpha$ remains bounded above and below and is periodic. On the other hand, the angle $\theta$ is not homotopic to a constant function.}} \label{application_SH_snake}
\end{figure}

We conclude this section by formalizing the result. 

\begin{thm}
For sufficiently large $L$, the cubic Swift–Hohenberg equation \eqref{Second Order System_SH} admits localized patterned states whose $(\alpha, L)$ bifurcation diagram consists of two interlaced snaking curves.
\end{thm}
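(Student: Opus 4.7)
The strategy is to verify each hypothesis of Lemma \ref{lem:final} for the Swift-Hohenberg system \eqref{Second Order System_SH} using the computer-assisted machinery already developed in the paper, and then read off the snaking conclusion from the homotopy class of the phase. The hyperbolicity of the origin (Hypothesis \ref{hyp:hyperbolic}) is the easy piece: for the linearization of \eqref{Second Order System_SH} at $u=0$ the characteristic polynomial is a quartic with coefficients depending polynomially on $\alpha$, and for $\alpha$ in the parameter window detected numerically I would simply enclose the four eigenvalues in interval arithmetic and check that they split into two pairs with strictly positive and strictly negative real parts. These enclosures are in any case required to construct the parameterization of $W^s_{\mathrm{loc}}(0)$.

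The substantive part of the proof is constructing and validating the loop of heteroclinic connections. The plan is to partition the branch into $K=6$ arclength segments, use the shooting strategy illustrated in Figure \ref{application_SH_snake_hetero} together with Newton's method on $f$ from \eqref{eq:zero_finding_f} to produce an initial patterned front, and then propagate the six segments by pseudo-arclength continuation, enforcing the matching condition \eqref{joining_condition} so that the last segment closes back on the first. Each $\bx^{(k)}(s)$ and its tangent vector $\eta_s^{(k)}(s)$ is stored as a Chebyshev polynomial of degree $N_s=15$ in the arclength variable; the tangent vectors are given the sparsity structure prescribed by Corollary \ref{cor_cond_smooth}, so that the a posteriori non-degeneracy condition \eqref{cond_smooth} can later be checked in finitely many interval operations.

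With the numerical data in hand, the Newton-Kantorovich bounds $Y_0^{(k)}, Z_0^{(k)}, Z_1^{(k)}, Z_2^{(k)}$ on each segment would be evaluated via the explicit formulas of Section \ref{sec:explicit_bounds} under the weights $\nu_\gamma=\nu_v=\nu_w=\nu_p=1.4$ and $\nu_a=1.1$. Table \ref{table_SH_snake} already records the outcome: for each $k$ the radii polynomial \eqref{radii_poly} attains a strictly negative value at the displayed radius $r_0^{(k)}$, so Theorem \ref{thm:radii_polynomial} yields a true branch $\tilde{x}^{(k)}(s)$ in a tube of radius $r_0^{(k)}$ around $\bx^{(k)}(s)$, and the smoothness inequality \eqref{cond_smooth} is then verified segment by segment using the bounded Chebyshev representation of $\bx^{(k)}$ and $\eta_s^{(k)}$. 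Together these certify Hypothesis \ref{hyp:regular_patterned_front}.

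The remaining step, and the pivotal one distinguishing this theorem from the isolas version, is to rigorously certify that the lifted phase satisfies $\tilde{\theta}(1)\neq\tilde{\theta}(0)$. Since the validated enclosures control $\theta_1(s),\theta_2(s)$ uniformly along the loop, the winding number of $s\mapsto(\theta_1(s),\theta_2(s))\in S^1$ can be computed by tracking rigorously certified sign changes of the coordinates at the segment boundaries, as depicted in the right panel of Figure \ref{application_SH_snake}. A nonzero winding number gives $\tilde{\theta}(1)\neq\tilde{\theta}(0)$, and the snaking alternative of the Forcing Theorem \ref{thm:forcing_theorem}, as packaged in Lemma \ref{lem:final}, delivers the claimed interlaced snakes for large $L$. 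The main technical obstacle I anticipate lies in keeping $Z_1^{(k)}+Z_0^{(k)}<1$ uniformly on the segments that traverse fold points, where the linearized operator is ill-conditioned; this is where the weighted-norm optimization via JuMP described after \eqref{perron} and the FFT-based memory strategy of Section \ref{sec:opti} absorb essentially all the difficulty.
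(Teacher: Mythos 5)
Your proposal is correct and follows essentially the same route as the paper: the paper's proof consists of invoking Lemma~\ref{lem:final} after verifying its hypotheses via the computer-assisted computations summarized in Table~\ref{table_SH_snake} (the six validated segments, the smoothness condition~\eqref{cond_smooth}, and the interval-arithmetic check that the lifted phase is not homotopic to a constant), which is exactly the plan you lay out. Your additional remarks on the winding-number computation and on the conditioning near folds are consistent with the paper's discussion and do not constitute a different argument.
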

\begin{proof}
	The hypotheses of Lemma~\ref{lem:final} are rigorously verified and the corresponding implementation is available in the accompanying code repository \cite{Duchesne}.
\end{proof}

\subsubsection{Stacked isolas in Swift-Hohenberg} \label{subsec:Isolas}
% Isolas
Next, we consider the case where the unstable manifold associated with the periodic orbit is non-orientable. 
The process of finding a numerical candidate follows the same general approach as in the orientable case. However, the key distinction is that the bundle is now $4\pi$-periodic, since after a $2\pi$ rotation, the solution returns to the same spatial position on the orbit but with the tangent vector reversed in direction. To address this, we scale the original periodic solution to be $\pi$-periodic. To solve for the coefficients and still be a solution of \eqref{eq:map_periodic_sol}, we extend the solution to the periodic orbit with non-minimal period $2\pi$. For example let $\tau = 2t $, then
\begin{align*}
	\gamma_1(\theta) = \sum_{n \in \N} (\gamma_1^*)_n \cos\left(\frac12 n\theta \right),
\end{align*}
correspond to the cosine Fourier expension of the first equation in \eqref{rescale first order system}, then we have
\begin{align*}
	\gamma_1(\theta) = \sum_{n \in \N} (\gamma_1^*)_n \cos\left(\frac12 n \theta \right) = \sum_{n \in \N} (\gamma_1)_n \cos(n\theta),
\end{align*}
where the new coefficients $(\gamma_1)_n$ are given by
\begin{align*}
	{(\gamma_1) }_n = 
	\begin{cases}
		{(\gamma_1^*) }_{n/2}, & \text{if } n \text{ even}, \\
		0, & \text{if } n \text{ odd}.
	\end{cases}
\end{align*}
As a result, the frequency $\omega$ solved for in this case corresponds to twice the frequency of the original periodic orbit patterned front. The main advantage of this reformulation is that, from this point onward, the continuation and validation procedures remain identical to those used in the orientable case. However, representing the unstable manifold in the non-orientable setting requires twice as many Fourier coefficients, significantly increasing both the computational cost and memory requirements of the proof. From this point, we apply the same technique as in the orientable case to compute a set of numerical candidates (see Figure~\ref{application_SH_isola_hetero}). 
\begin{figure}[H]
	\center
	\includegraphics[scale = 0.3]{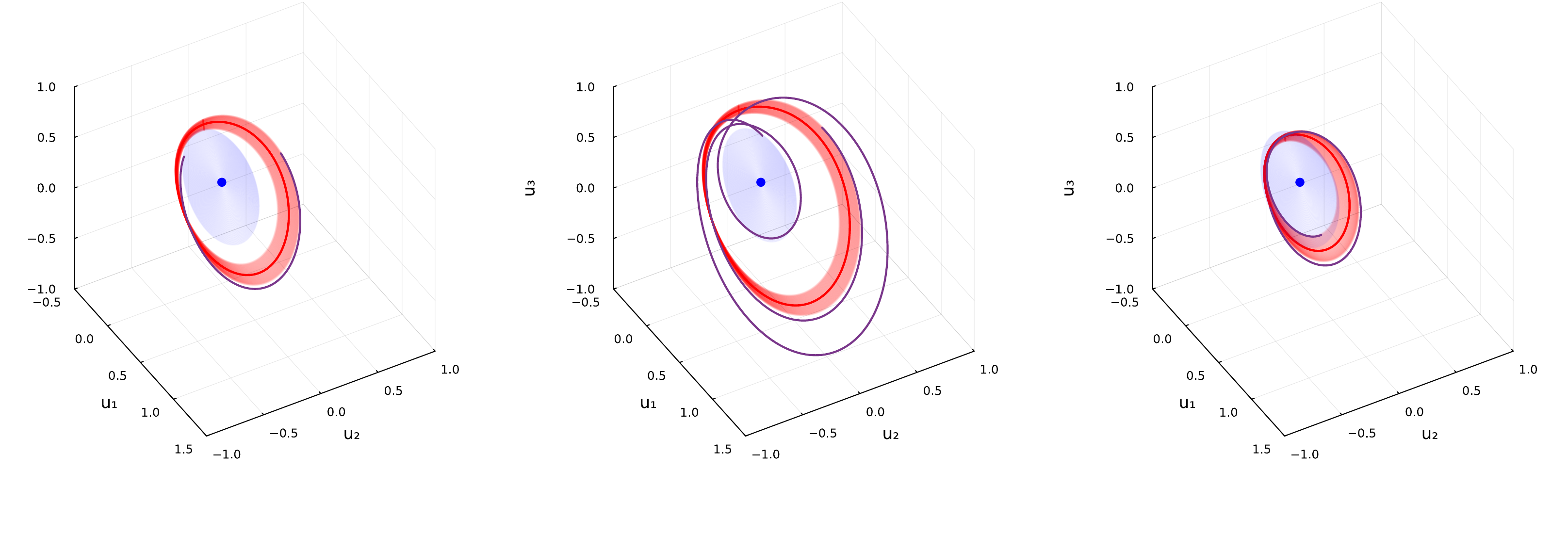} 
	\vspace{-.8cm}
	\caption{{\small Three-dimensional projection of the heteroclinic connection for the Swift–Hohenberg equation \eqref{Second Order System_SH} for $\alpha=0.35$ (left), $\alpha=0.4521$ (center) and $\alpha=0.2468$ (right). The figures display the periodic orbit (solid red) along with its local unstable manifold (shaded red), the origin (solid blue) with its associated local stable manifold (shaded blue), and the heteroclinic orbit connecting the two manifolds (purple).}} \label{application_SH_isola_hetero}
\end{figure}
Similarly as the orientable case, we use the numerical approximation $\bx(s)$ to compute the bounds required for the Newton-Kantorovich Theorem using interval arithmetic. The outcomes of these computations are summarized in Table \ref{table_SH_isolas}. Each segment required approximately 2 hours and 45 minutes to validate, using the weights $\nu_\gamma = \nu_v = \nu_w = \nu_p = 1.4 , 1.5$ and $\nu_a = 1.1$. Each segment approximation $\bx(s)$ are Chebyshev polynomial of order 15. The longer computation time is primarily due to the higher number of coefficients needed to accurately represent the local unstable manifold associated with the periodic orbit.  
\begin{table}[H]
	\center
	{\scriptsize
	\begin{tabular}{|c|c|c|c|c|c|c|}
		\hline
		\rowcolor[HTML]{C0C0C0}
		\# & Arclength & $r_0$ & $Y_0$ & $Z_0$ & $Z_1$ & $Z_2(r)$ \\ \hline
		1 & $ 3.0001 $ & $ 2.8181 \times 10^{ -7 } $ & $ 6.6669 \times 10^{ -8 } $ & $ 2.4679 \times 10^{ -3 } $ & $0.7585$ & $8867 + \mathcal{O}(r)$ \\ \hline
		2 & $ 3.0001 $ & $ 3.3303 \times 10^{ -7 } $ & $ 1.7063 \times 10^{ -7 } $ & $ 1.181 \times 10^{ -2 } $ & $0.4725$ & $9898 + \mathcal{O}(r)$ \\ \hline
		3 & $ 3.0002 $ & $ 4.7953 \times 10^{ -7 } $ & $ 2.6014 \times 10^{ -7 } $ & $ 8.5295 \times 10^{ -3 } $ & $0.4424$ & $13652 + \mathcal{O}(r)$ \\ \hline
		4 & $ 3.0001 $ & $ 7.1363 \times 10^{ -7 } $ & $ 3.9362 \times 10^{ -7 } $ & $ 1.1299 \times 10^{ -2 } $ & $0.4296$ & $10601 + \mathcal{O}(r)$ \\ \hline
		5 & $ 3.0011 $ & $ 1.7193 \times 10^{ -6 } $ & $ 7.8329 \times 10^{ -7 } $ & $ 4.9765 \times 10^{ -2 } $ & $0.451$ & $25408 + \mathcal{O}(r)$ \\ \hline
		6 & $ 1.0004 $ & $ 2.3094 \times 10^{ -6 } $ & $ 1.1951 \times 10^{ -6 } $ & $ 3.7236 \times 10^{ -6 } $ & $0.424$ & $25352 + \mathcal{O}(r)$ \\ \hline
		7 & $ 0.3 $ & $ 1.0022 \times 10^{ -6 } $ & $ 4.3797 \times 10^{ -7 } $ & $ 4.8786 \times 10^{ -6 } $ & $0.5503$ & $12684 + \mathcal{O}(r)$ \\ \hline
		8 & $ 0.2252 $ & $ 1.3924 \times 10^{ -6 } $ & $ 5.8933 \times 10^{ -7 } $ & $ 7.1522 \times 10^{ -6 } $ & $0.5504$ & $18913 + \mathcal{O}(r)$ \\ \hline
		9 & $ 0.1502 $ & $ 2.8318 \times 10^{ -6 } $ & $ 1.0552 \times 10^{ -6 } $ & $ 1.1459 \times 10^{ -5 } $ & $0.5424$ & $30012 + \mathcal{O}(r)$ \\ \hline
		10 & $ 0.5001 $ & $ 2.0231 \times 10^{ -6 } $ & $ 7.343 \times 10^{ -7 } $ & $ 1.0723 \times 10^{ -4 } $ & $0.5664$ & $34868 + \mathcal{O}(r)$ \\ \hline
		11 & $ 2.0004 $ & $ 2.532 \times 10^{ -6 } $ & $ 7.165 \times 10^{ -7 } $ & $ 5.4561 \times 10^{ -5 } $ & $0.641$ & $30001 + \mathcal{O}(r)$ \\ \hline
		12 & $ 3.0012 $ & $ 8.954 \times 10^{ -7 } $ & $ 2.2631 \times 10^{ -7 } $ & $ 2.2446 \times 10^{ -3 } $ & $0.7263$ & $20887 + \mathcal{O}(r)$ \\ \hline
		13 & $ 3.0011 $ & $ 8.6065 \times 10^{ -7 } $ & $ 3.3859 \times 10^{ -7 } $ & $ 1.7253 \times 10^{ -2 } $ & $0.582$ & $8545 + \mathcal{O}(r)$ \\ \hline
		14 & $ 3.0015 $ & $ 2.2096 \times 10^{ -6 } $ & $ 3.3042 \times 10^{ -7 } $ & $ 4.6992 \times 10^{ -2 } $ & $0.783$ & $9286 + \mathcal{O}(r)$ \\ \hline
		15 & $ 3.0018 $ & $ 2.7726 \times 10^{ -6 } $ & $ 9.7184 \times 10^{ -7 } $ & $ 2.7707 \times 10^{ -2 } $ & $0.5981$ & $8525 + \mathcal{O}(r)$ \\ \hline
		16 & $ 1.5 $ & $ 6.8488 \times 10^{ -7 } $ & $ 2.5244 \times 10^{ -7 } $ & $ 1.6994 \times 10^{ -5 } $ & $0.6259$ & $7950 + \mathcal{O}(r)$ \\ \hline
		17 & $ 0.5001 $ & $ 1.0399 \times 10^{ -6 } $ & $ 4.2293 \times 10^{ -7 } $ & $ 6.6276 \times 10^{ -7 } $ & $0.584$ & $8910 + \mathcal{O}(r)$ \\ \hline
		18 & $ 0.3007 $ & $ 2.9065 \times 10^{ -7 } $ & $ 1.3578 \times 10^{ -7 } $ & $ 4.0131 \times 10^{ -6 } $ & $0.5271$ & $19641 + \mathcal{O}(r)$ \\ \hline
		19 & $ 0.8265 $ & $ 2.8034 \times 10^{ -7 } $ & $ 1.2725 \times 10^{ -7 } $ & $ 1.1071 \times 10^{ -5 } $ & $0.5405$ & $19835 + \mathcal{O}(r)$ \\ \hline
		\end{tabular}
	}
	\caption{{\small Data for the computer-assisted proof of stacked isolas in the Swift-Hohenberg equation \eqref{Second Order System_SH}.}}\label{table_SH_isolas}
\end{table}

As shown in Table~\ref{table_SH_isolas}, we notice that some segments were proven with significantly shorter arclength compared to others. These segments correspond to folds in the continuation branch with respect to the parameter $\alpha$ (see the left panel of Figure~\ref{application_SH_isola}). Our results indicate that the continuation branch of the heteroclinic orbit is periodic and remains confined to a closed interval of the parameter $\alpha$. To confirm the presence of isolas in this setting, we a posteriori verify that condition~\eqref{cond_smooth} holds and that the lifted phase $\tilde{\theta}$ is homotopic to a constant function (see the right panel of Figure~\ref{application_SH_isola}) using interval arithmetic.
\begin{figure}[H]
	\center
	\includegraphics[scale = 0.18]{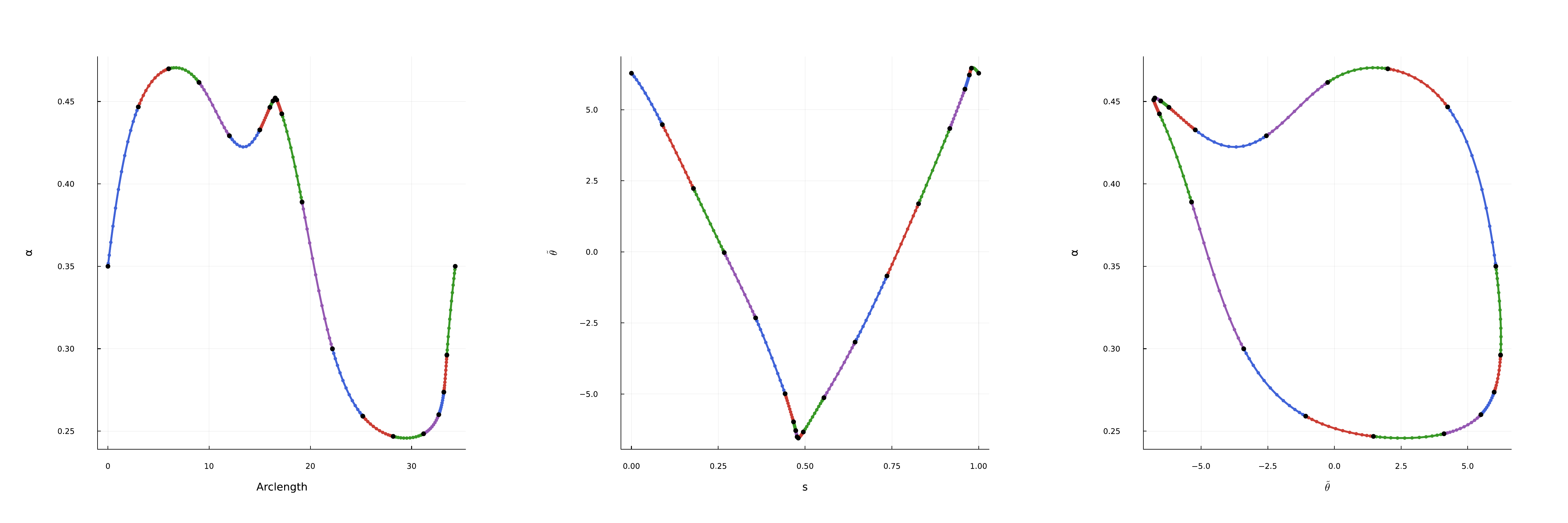} 
		\vspace{-.5cm}
\caption{{\small Graphs of the parameter $\alpha$ (left), plotted with respect to its arclength parameterization, and of the angle $\theta$ (center), displayed along the continuation branch. The right panel shows a continuation diagram of the heteroclinic orbit, illustrating the variation of $\alpha$ with respect to $\theta$, as sketched in Figure~\ref{fig:pattern_hetero}. Coloured dots represent the grid points used to approximate the solution $\bx(s)$, while black dots indicate the data points connecting adjacent segments. The parameter $\alpha$ remains bounded above and below and is periodic. On the other hand, the angle $\theta$ is homotopic to a constant function.}} \label{application_SH_isola}
\end{figure}

We conclude this section by formalizing the result.

	\begin{thm}
		For sufficiently large $L$, the cubic Swift–Hohenberg equation given in \eqref{Second Order System_SH} admits localized patterned states whose $(\alpha, L)$  bifurcation diagram consists of an infinite stack of isolas.
	\end{thm}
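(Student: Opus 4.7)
The plan is to apply Lemma~\ref{lem:final} with the conclusion that $\tilde{\theta}(1) = \tilde{\theta}(0)$, which forces the stacked isola structure rather than snaking. The overall strategy mirrors the snaking proof for \eqref{Second Order System_SH}, but with the key modification that the unstable bundle of the periodic orbit is now non-orientable. To accommodate this within the same zero-finding framework \eqref{eq:zero_finding_F}, I would first rescale time so that the periodic orbit becomes $\pi$-periodic, allowing the $4\pi$-periodic bundle eigenfunction $v(\theta)$ to be represented as a $2\pi$-periodic sine/cosine series. In this rescaled formulation, the orbit for $\Gamma$ is computed as a non-minimal period-$2\pi$ solution of \eqref{eq:map_periodic_sol} (so $\omega$ doubles), and everything downstream, the parameterization of $W^u_{\rm loc}(\Gamma)$ via \eqref{eq:high_order_term_WuGamma}, the connecting-orbit BVP, the continuation, and the operator construction of Section~\ref{sec:data_representation}, proceeds verbatim.

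Second, I would generate a numerical loop of heteroclinic orbits by computing an initial candidate via shooting from the local unstable manifold of the origin and then applying pseudo-arclength continuation in the extended variable $x = (\alpha, y)$, partitioning the resulting closed branch into segments $\{\bx^{(k)}(s)\}_{k=1}^{K}$ satisfying the connecting condition \eqref{joining_condition} and with tangent vectors $\eta_s^{(k)}(s)$ chosen so that Corollary~\ref{cor_cond_smooth} applies. Because the non-orientable branch exhibits several fold points in $\alpha$, the segment lengths must be adapted, shortened near folds and elongated on smooth portions, to keep each $Z_1^{(k)}$ safely below one given the memory-imposed bound on the truncation sizes and the Chebyshev order in $s$.

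Third, for each segment I would construct $A^{(k)\dagger}(s)$ and $A^{(k)}(s)$ as in Section~\ref{sec:data_representation} and rigorously compute the bounds $Y_0^{(k)}, Z_0^{(k)}, Z_1^{(k)}, Z_2^{(k)}$ using the explicit formulas of Section~\ref{sec:explicit_bounds}, the Perron--Frobenius-weighted optimization of the $\mu_i$, and interval arithmetic supplemented by the floating-point wrapping of Appendix~\ref{appA:flaoting_point} for the $Z_0$ matrix products. Solving the radii polynomial \eqref{radii_poly} then yields existence radii $r_0^{(k)}$, which in turn are inserted into the smoothness test \eqref{cond_smooth}. The output of this stage is summarized in Table~\ref{table_SH_isolas}.

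Finally, to distinguish this case from the snaking one, I would rigorously evaluate the lifted phase $\tilde{\theta}$ along the validated tube. Since $\theta_1(s), \theta_2(s)$ are components of $\tilde{x}^{(k)}(s)$ enclosed within the validated ball of radius $r_0^{(k)}$ and satisfy $\theta_1^2 + \theta_2^2 = 1$, I can rigorously bound $\tilde{\theta}$ at each segment endpoint and chain the enclosures around the loop; the verification that $\tilde{\theta}(1) - \tilde{\theta}(0) = 0$ reduces to checking that the cumulative signed-angle increment, computed with interval arithmetic, lies in an interval strictly contained in a window of width less than $2\pi$ around zero. Invoking Hypothesis~\ref{hyp:hyperbolic} (trivially verified for \eqref{Second Order System_SH} since $D_u\Psi(0,\alpha)$ has explicit hyperbolic spectrum for the relevant $\alpha$-range) and applying Lemma~\ref{lem:final} then yields the stacked isolas. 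The main obstacle is the third step: the non-orientable bundle roughly doubles the Fourier truncation needed for $W^u_{\rm loc}(\Gamma)$, inflating memory and pushing the $Z_1$ estimate close to one, which is why the branch must be split into substantially more segments than in the snaking proof and why the weighted-norm optimization for $\mu_i$ is essential for the contraction bound to close.
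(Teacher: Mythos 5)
Your proposal is correct and follows essentially the same route as the paper: rescale to a $\pi$-periodic orbit so the non-orientable bundle becomes $2\pi$-periodic, validate a segmented loop of heteroclinics via the Newton--Kantorovich bounds and the connecting condition, check the smoothness condition, and rigorously verify that the lifted phase is homotopic to a constant before invoking Lemma~\ref{lem:final}. The paper's stated proof simply defers these verifications to the accompanying code, and your description matches the construction laid out in Section~\ref{subsec:Isolas}, including the interval-arithmetic argument that pins $\tilde{\theta}(1)-\tilde{\theta}(0)$ to zero.
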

	\begin{proof}
		The hypotheses of Lemma~\ref{lem:final} are rigorously verified and the corresponding implementation is available in the accompanying code repository \cite{Duchesne}.
	\end{proof}

% Snaking
\subsection{Snaking in the Gray-Scott equation}
We consider the steady-state of the Gray-Scott equation given by 
\[
	\left\{
	\begin{aligned}
		9u'' &= uv^2 + u - \alpha \\ 
		v'' &=-  uv^2  + \frac12 v,
	\end{aligned}
	\right. 
\]%
for which snaking has been studied in \cite{Gandhi2018}. In this form, the equilibrium for which the snaking behavior occurs is not at the origin, hence, we translate and rescale the system and get
\begin{equation} \label{Second Order System_GS}
	\left\{
	\begin{aligned}
	u'' &=  \frac{1}{9} \left(  \frac12v^2 + \alpha \left( u+ v +uv^2 \right)+ \alpha^2 uv+ \alpha^3 u \right), \\
	v'' &= - \frac12v^2 - \alpha \left( \frac12v + uv^2 \right) -2 \alpha^2 uv - \alpha^3 u ,
	\end{aligned}
	\right. 
\end{equation}
which fits the structure of the general second order system \eqref{Second Order System}. In the remaining of this section, we proof the global snaking for the Gray-Scott equation \eqref{Second Order System_GS}.

We consider the case where the unstable manifold associated with the periodic orbit is orientable. 
As before, we start by finding a numerical approximation of the solution branch (see Figure~\ref{application_GS_snake_hetero}). 
\begin{figure}[H]
	\center
	\includegraphics[scale = 0.29]{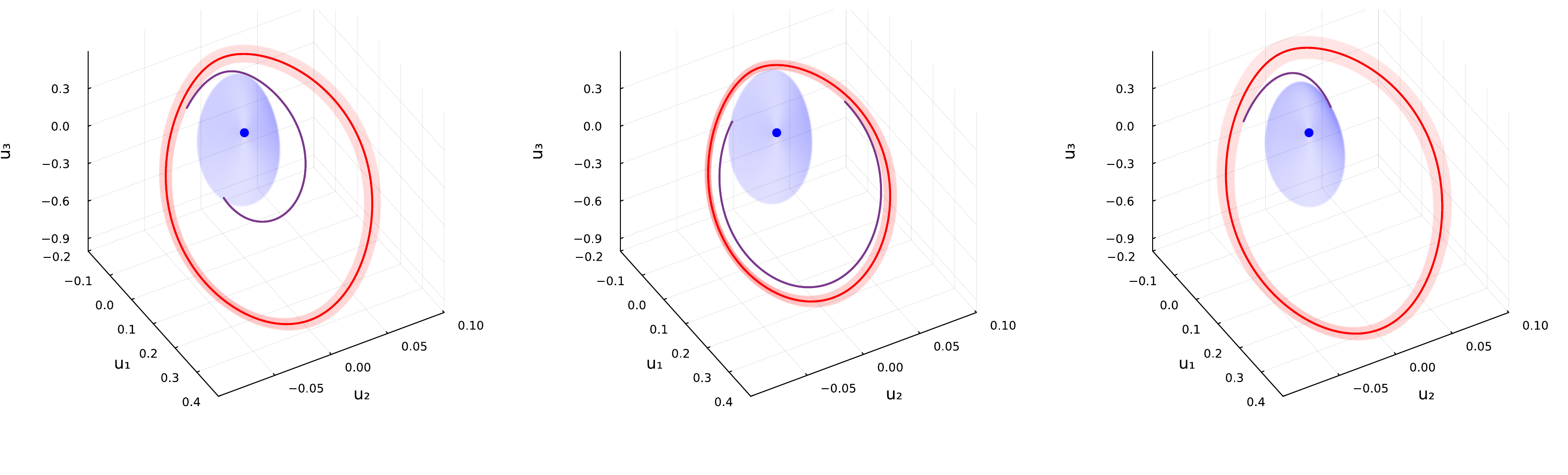} 
	\vspace{-.5cm}
	\caption{{\small Three-dimensional projection of the heteroclinic connection for the Gray-Scott equation \eqref{Second Order System_GS} for $\alpha = 1.1889$ (left), $\alpha = 1.2136$ (center) and $\alpha = 1.1666$ (right). The figure displays the periodic orbit (solid red) along with its associated local unstable manifold (shaded red), the origin (solid blue) with its associated local stable manifold (shaded blue), and the heteroclinic orbit connecting the two manifolds (purple).}} \label{application_GS_snake_hetero}
\end{figure}
Then we use the numerical approximation $\bx(s)$ to compute the bounds required for the Newton-Kantorovich Theorem using interval arithmetic. The outcomes of these computations are summarized in Table~\ref{table_GS}. Each segment required approximately 2 hours to validate, using the weights $\nu_\gamma = \nu_v = \nu_w = \nu_p = 1.4$ and $\nu_a = 1.1$. Each segment approximation $\bx(s)$ are Chebyshev polynomial of order 15. 
\begin{table}[H]
	\center
	{\footnotesize
	\begin{tabular}{|c|c|c|c|c|c|c|}
		\hline
		\rowcolor[HTML]{C0C0C0}
		\# & Arclength & $r_0$ & $Y_0$ & $Z_0$ & $Z_1$ & $Z_2(r)$ \\ \hline
		1 & $ 1.0001 $ & $ 5.8609 \times 10^{ -7 } $ & $ 2.5081 \times 10^{ -7 } $ & $ 2.9712 \times 10^{ -6 } $ & $0.5678$ & $7311 + \mathcal{O}(r)$ \\ \hline
		2 & $ 0.4001 $ & $ 3.2988 \times 10^{ -7 } $ & $ 1.4116 \times 10^{ -7 } $ & $ 8.181 \times 10^{ -6 } $ & $0.5699$ & $6696 + \mathcal{O}(r)$ \\ \hline
		3 & $ 0.2006 $ & $ 5.428 \times 10^{ -7 } $ & $ 2.2949 \times 10^{ -7 } $ & $ 2.7708 \times 10^{ -5 } $ & $0.5732$ & $7321 + \mathcal{O}(r)$ \\ \hline
		4 & $ 0.4008 $ & $ 1.205 \times 10^{ -6 } $ & $ 5.0483 \times 10^{ -7 } $ & $ 1.4978 \times 10^{ -4 } $ & $0.572$ & $7367 + \mathcal{O}(r)$ \\ \hline
		5 & $ 1.4005 $ & $ 1.0503 \times 10^{ -6 } $ & $ 4.622 \times 10^{ -7 } $ & $ 1.5908 \times 10^{ -3 } $ & $0.5495$ & $8421 + \mathcal{O}(r)$ \\ \hline
		6 & $ 1.4002 $ & $ 6.7864 \times 10^{ -8 } $ & $ 2.8663 \times 10^{ -8 } $ & $ 4.8946 \times 10^{ -4 } $ & $0.5766$ & $7931 + \mathcal{O}(r)$ \\ \hline
		7 & $ 1.4002 $ & $ 1.4967 \times 10^{ -6 } $ & $ 5.2672 \times 10^{ -7 } $ & $ 2.2249 \times 10^{ -3 } $ & $0.6205$ & $16951 + \mathcal{O}(r)$ \\ \hline
		8 & $ 2.0006 $ & $ 5.6217 \times 10^{ -7 } $ & $ 1.6054 \times 10^{ -7 } $ & $ 1.9883 \times 10^{ -2 } $ & $0.6889$ & $10084 + \mathcal{O}(r)$ \\ \hline
		9 & $ 2.5014 $ & $ 6.1839 \times 10^{ -7 } $ & $ 9.9782 \times 10^{ -8 } $ & $ 1.5408 \times 10^{ -1 } $ & $0.6783$ & $10182 + \mathcal{O}(r)$ \\ \hline
		10 & $ 2.001 $ & $ 2.3013 \times 10^{ -7 } $ & $ 8.7113 \times 10^{ -8 } $ & $ 2.2801 \times 10^{ -2 } $ & $0.5968$ & $8234 + \mathcal{O}(r)$ \\ \hline
		11 & $ 1.0008 $ & $ 3.9899 \times 10^{ -6 } $ & $ 1.4843 \times 10^{ -6 } $ & $ 1.4151 \times 10^{ -4 } $ & $0.5944$ & $8379 + \mathcal{O}(r)$ \\ \hline
		12 & $ 0.4014 $ & $ 3.958 \times 10^{ -6 } $ & $ 1.5093 \times 10^{ -6 } $ & $ 5.3848 \times 10^{ -5 } $ & $0.5845$ & $8625 + \mathcal{O}(r)$ \\ \hline
		13 & $ 0.4004 $ & $ 2.9759 \times 10^{ -6 } $ & $ 1.1811 \times 10^{ -6 } $ & $ 1.7438 \times 10^{ -6 } $ & $0.577$ & $8775 + \mathcal{O}(r)$ \\ \hline
		14 & $ 0.4 $ & $ 6.0221 \times 10^{ -6 } $ & $ 2.2573 \times 10^{ -6 } $ & $ 2.7087 \times 10^{ -7 } $ & $0.573$ & $8659 + \mathcal{O}(r)$ \\ \hline
		15 & $ 2.0008 $ & $ 7.9219 \times 10^{ -6 } $ & $ 2.4257 \times 10^{ -6 } $ & $ 2.3783 \times 10^{ -3 } $ & $0.5533$ & $17433 + \mathcal{O}(r)$ \\ \hline
		16 & $ 2.0005 $ & $ 2.4714 \times 10^{ -6 } $ & $ 9.824 \times 10^{ -7 } $ & $ 1.1342 \times 10^{ -3 } $ & $0.5697$ & $12820 + \mathcal{O}(r)$ \\ \hline
		17 & $ 2.0004 $ & $ 1.3561 \times 10^{ -6 } $ & $ 5.5229 \times 10^{ -7 } $ & $ 7.436 \times 10^{ -4 } $ & $0.5676$ & $18008 + \mathcal{O}(r)$ \\ \hline
		18 & $ 2.0004 $ & $ 3.9705 \times 10^{ -7 } $ & $ 1.6837 \times 10^{ -7 } $ & $ 5.9075 \times 10^{ -4 } $ & $0.5676$ & $19638 + \mathcal{O}(r)$ \\ \hline
		19 & $ 1.5208 $ & $ 2.7269 \times 10^{ -7 } $ & $ 1.1833 \times 10^{ -7 } $ & $ 3.0672 \times 10^{ -5 } $ & $0.5613$ & $17567 + \mathcal{O}(r)$ \\ \hline
	\end{tabular}
	\caption{{\small Data for the computer-assisted proof of snaking in the Gray-Scott equation \eqref{Second Order System_GS}.}} \label{table_GS}
	}
\end{table}
 Our results indicate that the continuation branch of the heteroclinic orbit is periodic and remains confined to a closed interval of the parameter $\alpha$. To confirm the presence of snaking behavior in this setting, we a posteriori verify that condition~\eqref{cond_smooth} holds and that the angle $\tilde{\theta}$ is not homotopic to a constant function (see the right panel of Figure~\ref{application_SH_isola}) using interval arithmetic.
\begin{figure}[H]
	\center
	\includegraphics[scale = 0.18]{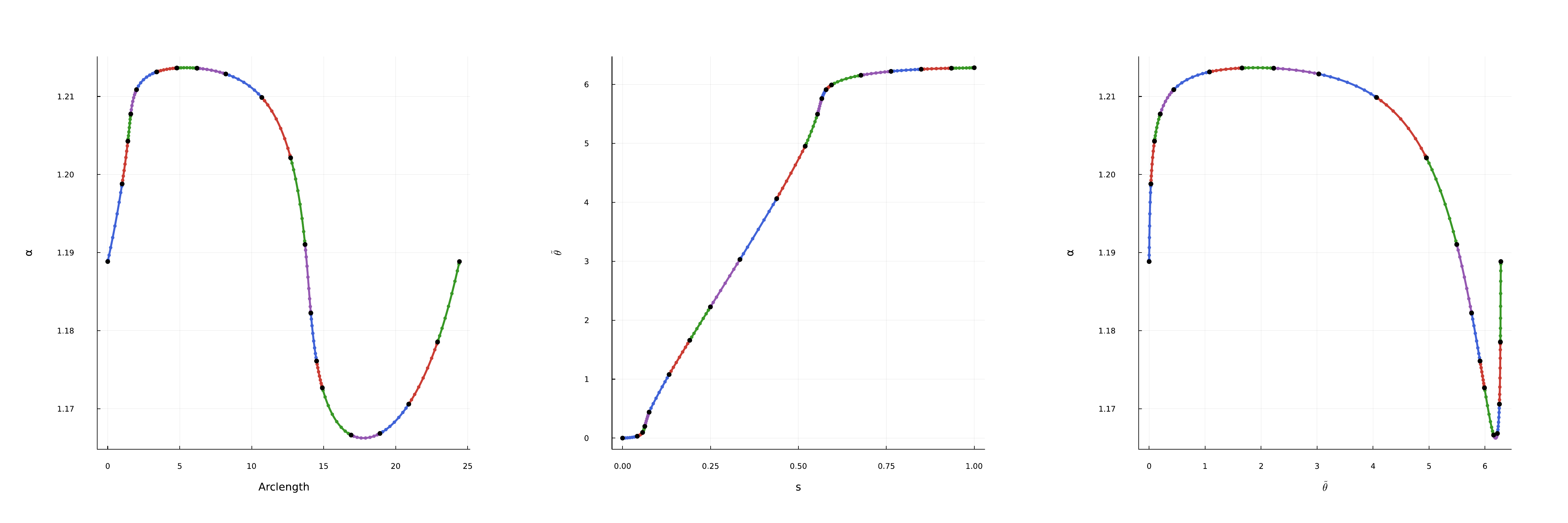} 
	\vspace{-.5cm}
	\caption{{\small Graphs of the parameter $\alpha$ (left), plotted with respect to its arclength parameterization, and of the angle $\theta$ (center), displayed along the continuation branch. The right panel shows a continuation diagram of the heteroclinic orbit, illustrating the variation of $\alpha$ with respect to $\theta$, as sketched in Figure~\ref{fig:pattern_hetero}. Coloured dots represent the grid points used to approximate the solution $\bx(s)$, while black dots indicate the data points connecting adjacent segments. The parameter $\alpha$ remains bounded above and below and is periodic. On the other hand, the angle $\theta$ is not homotopic to a constant function.}} \label{application_GS_snake}
\end{figure}

Our result is summarized in the following Theorem.

\begin{thm}
	For sufficiently large $L$, the Gray-Scott equation given in \eqref{Second Order System_GS} admits localized patterned states whose $(\alpha, L)$ bifurcation diagram consists of two interlaced snaking curves.
\end{thm}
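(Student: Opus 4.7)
The plan is to apply Lemma~\ref{lem:final} directly to the Gray--Scott system~\eqref{Second Order System_GS}, since this lemma encapsulates exactly the forcing mechanism from Theorem~\ref{thm:forcing_theorem} in a form suitable for computer-assisted verification. The task therefore reduces to exhibiting a loop of validated heteroclinic connections between the symmetric periodic orbit $\Gamma$ and the hyperbolic equilibrium at the origin, and to certifying that the lifted phase $\tilde\theta$ winds nontrivially around the loop (so that the ``snaking'' rather than the ``isolas'' branch of Theorem~\ref{thm:forcing_theorem} applies).

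First, I would construct a numerical approximation of the full loop. Starting from a single patterned front obtained by a shooting procedure from the local unstable manifold $W^u_{\rm loc}(\Gamma)$ into the local stable manifold $W^s_{\rm loc}(0)$ at a convenient $\alpha$, Newton's method applied to the zero-finding map $f$ in~\eqref{eq:zero_finding_f} produces a high-accuracy seed. From there, pseudo-arclength continuation via~\eqref{eq:ZFP} traces the closed loop, which is subdivided into a finite number of segments $\{\bx^{(k)}(s)\}_{k=1}^K$ (the data in Table~\ref{table_GS} suggests $K=19$), each represented as a degree-$15$ Chebyshev polynomial in $s\in[-1,1]$. The joining condition~\eqref{joining_condition} is imposed at segment boundaries, and the tangent vectors $\eta_s^{(k)}$ are chosen so that the structural requirement of Corollary~\ref{cor_cond_smooth} holds (only the components corresponding to $\alpha,\gamma,\omega,a$ are activated).

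Next, on each segment I would assemble the operators $A^{(k)\dagger}(s)$ and $A^{(k)}(s)$ as described in Section~\ref{sec:data_representation}, with the weights $\nu_\gamma=\nu_v=\nu_w=\nu_p=1.4$ and $\nu_a=1.1$. Using the explicit formulas of Section~\ref{sec:explicit_bounds}, I would then rigorously compute the bounds $Y_0^{(k)}, Z_0^{(k)}, Z_1^{(k)}, Z_2^{(k)}(r)$ in interval arithmetic and check that the associated radii polynomial~\eqref{radii_poly} admits a negative value at some $r_0^{(k)}>0$. A successful outcome is reported segment-by-segment in Table~\ref{table_GS}, which a direct run of the accompanying code \cite{Duchesne} is intended to reproduce. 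Together with the verification of the hyperbolicity of the origin for the linearization $D_u\Psi(0,\alpha)$ of~\eqref{Second Order System_GS} (Hypothesis~\ref{hyp:hyperbolic}), this produces the smooth loop of regular patterned fronts required by Hypothesis~\ref{hyp:regular_patterned_front}, with the a~posteriori smoothness condition~\eqref{cond_smooth} verified through Theorem~\ref{thm:smoothbranchaposteriori} and Corollary~\ref{cor_cond_smooth}.

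Finally, I would compute the lifted phase $\tilde\theta$ along the validated loop by tracking the auxiliary variables $(\theta_1(s),\theta_2(s))$, which lie on the unit circle by~\eqref{eq:angle_condition_map}, and verifying with interval arithmetic that $\tilde\theta(1)\neq\tilde\theta(0)$; the left and center panels of Figure~\ref{application_GS_snake} suggest this inequality holds. Lemma~\ref{lem:final} then delivers the two interlaced snaking curves for large $L$. The main obstacle, as in the non-orientable Swift--Hohenberg case, is the sheer cost of validating the $Z_1$ bound: the matrix-matrix products defining $A^\dagger(s)$ must be controlled uniformly in $s$ over each segment, and the sharpness of the weights $\mu_i$ must be balanced by solving the constrained optimization problem around~\eqref{perron} to avoid inflating $Y_0$ or $Z_2(r)$ to the point where the radii polynomial no longer has a negative root. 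Memory management for the Fourier--Taylor expansion of $W^u_{\rm loc}(\Gamma)$ and the FFT-based evaluation strategy of Section~\ref{sec:opti} are what makes this feasible in practice.
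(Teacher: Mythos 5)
Your proposal is correct and follows exactly the paper's approach: the paper's proof is a one-line appeal to Lemma~\ref{lem:final}, with the hypotheses verified by the interval-arithmetic computations summarized in Table~\ref{table_GS} and implemented in the accompanying code, which is precisely the verification pipeline you describe. The only difference is one of exposition — you spell out the steps that the paper delegates to Sections~\ref{sec:bounds} and~\ref{sec:application} and to the code repository.
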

\begin{proof}
	The hypotheses of Lemma~\ref{lem:final} are rigorously verified and the corresponding implementation is available in the accompanying code repository \cite{Duchesne}.
\end{proof}

%=========================================================================
% Appendix
%=========================================================================	
\appendix
%!TEX root = main.tex
\section{Floating point error}\label{appA:flaoting_point}
We begin by recalling key results about floating-point arithmetic. In the IEEE 754 standard (Float64), for any $x,y \in \mathbb{R}$, we have
	$fl\left( x \pm y \right) = (x \pm y)(1 + \delta_1)$ and 
	$fl\left( x \cdot y \right) = (x \cdot y)(1 + \delta_2)$,
with $|\delta_1|,|\delta_2| \leq \epsilon \approx 2.22 \times 10^{-16}$. This can be extended to summations. For $x_1,\dots,x_n \in \mathbb{R}$, 
\begin{align*}
	fl \left( \sum_{k = 1}^n  x_k  \right) &= fl\left[ fl \left( x_1 + x_2 \right)  +  \sum_{k = 3}^n  x_k  \right] 
	= fl \left[ \left( x_1  +  x_2 \right)(1 + \delta_1)  +  \sum_{k = 3}^n  x_k  \right] \\
	&= fl \left[ \left( \left( x_1  +  x_2 \right)(1 + \delta_1)  +  x_3 \right)(1 + \delta_2)   +  \sum_{k = 4}^n  x_k  \right] \\
	& = \cdots = x_1 \prod_{\ell = 1 }^{n-1} (1 + \delta_\ell) +  \sum_{k = 2}^n  x_k \prod_{\ell = k-1}^{n-1} (1 + \delta_\ell)  
\end{align*}
We now recall similar results for complex arithmetic. For $ x,y\in \mathbb{C}$ with $x = x_1 + i x_2 , y = y_1 + iy_2$, 
\begin{align*}
	fl\left( x \pm y \right) &= fl\left( (x_1 + \mi x_2) \pm  (y_1 + \mi y_2) \right) \\
	&= fl\left( x_1  \pm  y_1 \right) + \mi fl\left(  x_2 \pm    y_2 \right) \\
	&= \left( x_1  \pm  y_1 \right)(1 + \delta_1) + \mi \left(  x_2 \pm    y_2 \right)(1 + \delta_2)
\end{align*}
with $|\delta_1|,|\delta_2| \leq \epsilon $. For multiplication, we obtain
\begin{align*}
	fl\left(xy \right) =& ~fl\left( (x_1 + \mi x_2)( y_1 + \mi y_2) \right) \\
	= &~fl \left( fl(x_1 y_1) - fl(x_2y_2) \right) + \mi fl\left( fl(x_2 y_1) + fl(x_1y_2) \right) \\
	= & ~  \left( (x_1 y_1)(1 + \delta_1 ) - (x_2y_2)(1 + \delta_2) \right) (1 + \delta_5) + \mi \left( (x_2 y_1)(1 + \delta_3) + (x_1y_2)(1 + \delta_4) \right)(1 + \delta_6) \\
	= & ~ xy + (x_1 y_1)( \delta_1  + \delta_5 + \delta_1 \delta_5) - (x_2y_2)( \delta_2 + \delta_5 + \delta_2\delta_5) \\
	&+ \mi  (x_2 y_1)( \delta_3 +  \delta_6 +  \delta_3  \delta_6) + i(x_1y_2)( \delta_4 + \delta_6 + \delta_4 \delta_6) ,
\end{align*}
with $|\delta_i| \leq \epsilon  $ for $\mi \in \{ 1,2,3,4,5,6 \}$. The resulting error can be bounded by
\begin{align*}
	|fl\left( xy \right)  - xy |= & ~ | (x_1 y_1)( \delta_1  + \delta_5 + \delta_1 \delta_5) - (x_2y_2)( \delta_2 + \delta_5 + \delta_2\delta_5) \\
	&+ \mi  (x_2 y_1)( \delta_3 +  \delta_6 +  \delta_3  \delta_6) + \mi (x_1y_2)( \delta_4 + \delta_6 + \delta_4 \delta_6) | \\
	\leq & \left(| x_1 y_1| + |x_2y_2| + |x_2 y_1| + | x_1y_2| \right)( 2\epsilon  + \epsilon^2).
\end{align*}
\begin{prop}
	Let $A \in \C^{m \times n}$, $B \in \C^{n \times p}$ be matrices with complex interval entries, written as
	\begin{align*}
		A = \bar{A} \pm R_1 \pm \mi R_2, \quad B = \bar{B} \pm Q_1 \pm \mi Q_2,
	\end{align*}
	  where $\bar{A}$, $\bar{B}$ are the midpoints,  and $R_1$,$R_2$,$Q_1$ and $Q_2$ are real matrices representing the radii of the intervals. Assume all operations are carried out in Float64 under the IEEE 754 standard. Let 
	\begin{align*}
		\epsilon &= 2.22045 \times 10^{-16 }, \quad
		r = \max\left( \| R_1	\| ,\| R_2	\|  \right), \quad
		\rho = \max\left(  \| Q_1	\| ,\| Q_2	\|  \right) ,\\
		M_\epsilon &= (1 + 2 \epsilon + \epsilon^2)(1+\epsilon)^{n-1} - 1, \quad 
		\bar{A} = 	\bar{A}^\Re + \mi 	\bar{A}^\Im, \quad
		\bar{B} = \bar{B}^\Re + \mi \bar{B}^\Im, \\
		C(\bar{A},\bar{B}) &= \left( \| \bar{A}^\Re \| + \| \bar{A}^\Im \| \right) \left( \| \bar{B}^\Re \| + \| \bar{B}^\Im \| \right) .
	\end{align*}
Then, $\| I - AB \| \leq \left\| I - fl\left( \bar{A}\bar{B} \right)\right\| + M_\epsilon C(A,B)  + 2 \rho \| \bar{A} \| + 2 r \| \bar{B} \| + 4 r \rho$.
\end{prop}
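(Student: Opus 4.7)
The plan is to decompose the error into three contributions via the triangle inequality,
\[
\|I - AB\| \leq \|I - fl(\bar{A}\bar{B})\| + \|fl(\bar{A}\bar{B}) - \bar{A}\bar{B}\| + \|\bar{A}\bar{B} - AB\|.
\]
The first summand appears unchanged in the target bound, so the task reduces to showing that $\|fl(\bar{A}\bar{B}) - \bar{A}\bar{B}\| \leq M_\epsilon C(\bar{A},\bar{B})$ (the rounding contribution) and that $\|\bar{A}\bar{B} - AB\| \leq 2\rho\|\bar{A}\| + 2r\|\bar{B}\| + 4r\rho$ (the interval-radii contribution).

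For the radii contribution I would expand
\[
AB - \bar{A}\bar{B} = \bar{A}(\pm Q_1 \pm \mi Q_2) + (\pm R_1 \pm \mi R_2)\bar{B} + (\pm R_1 \pm \mi R_2)(\pm Q_1 \pm \mi Q_2),
\]
apply submultiplicativity of the matrix norm together with $\|\pm R_1 \pm \mi R_2\| \leq \|R_1\| + \|R_2\| \leq 2r$ and similarly $\|\pm Q_1 \pm \mi Q_2\| \leq 2\rho$. Bounding the three terms separately produces exactly the summands $2\rho\|\bar{A}\|$, $2r\|\bar{B}\|$ and $4r\rho$.

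For the rounding error I would work entrywise. A single entry of $fl(\bar{A}\bar{B})$ is the floating-point accumulation of $n$ complex products. The preamble of the appendix already shows that one complex multiplication has error bounded by $(|x_1y_1|+|x_2y_2|+|x_2y_1|+|x_1y_2|)(2\epsilon + \epsilon^2)$, which factors as $(|x_1|+|x_2|)(|y_1|+|y_2|)(2\epsilon + \epsilon^2)$, and hence introduces a relative error factor $(1+2\epsilon+\epsilon^2)$. Each of the $n-1$ accumulating additions contributes an independent factor $(1+\epsilon)$ by the real-sum recurrence written out at the start of the appendix. Composing these factors and subtracting one yields the overall relative error bound $M_\epsilon = (1+2\epsilon+\epsilon^2)(1+\epsilon)^{n-1} - 1$. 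The resulting entrywise inequality factors as $M_\epsilon\bigl(|\bar{A}^\Re|+|\bar{A}^\Im|\bigr)\bigl(|\bar{B}^\Re|+|\bar{B}^\Im|\bigr)$ in the sense of nonnegative matrices, and passing to the operator norm via submultiplicativity produces $\|fl(\bar{A}\bar{B}) - \bar{A}\bar{B}\| \leq M_\epsilon C(\bar{A},\bar{B})$.

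The main obstacle I anticipate is controlling the signs of the various perturbations $\delta_k$ uniformly across all entries and all operations. Each floating-point step contributes a perturbation in $[-\epsilon,\epsilon]$, and a worst-case bound requires the standard inequality $|(1+\delta_1)\cdots(1+\delta_m) - 1| \leq (1+\epsilon)^m - 1$. This must be applied carefully since a single complex multiplication produces several $\delta$'s simultaneously (visible in the real and imaginary parts of $fl(xy)$ in the preamble) that must be absorbed into the single multiplicative factor $(1+2\epsilon+\epsilon^2)$, while the subsequent summation phase contributes a further $(1+\epsilon)^{n-1}$. Once this worst-case bookkeeping is pinned down, adding the three triangle-inequality contributions yields the stated inequality.
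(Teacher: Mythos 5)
Your proposal is correct and follows essentially the same route as the paper's proof: the same triangle-inequality decomposition into a computed residual, a floating-point rounding term bounded entrywise by $M_\epsilon C(\bar{A},\bar{B})$, and an interval-radii term bounded by $2\rho\|\bar{A}\| + 2r\|\bar{B}\| + 4r\rho$, with the same per-entry bookkeeping of the complex-multiplication factor $(1+2\epsilon+\epsilon^2)$ and the accumulation factor $(1+\epsilon)^{n-1}$. The only cosmetic difference is the order in which the triangle inequality is applied (the paper peels off the radii first), which changes nothing.
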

\begin{proof} We start by bounding
	\begin{align*}
		\| I - AB \| &= \left\| I - \left( \bar{A} \pm R_1 \pm \mi R_2 \right) \left( \bar{B} \pm Q_1 \pm \mi Q_2 \right)  \right\| \\
		&= \left\| I -  \bar{A}\bar{B} - \bar{A}\left( \pm Q_1 \pm \mi Q_2 \right) - \left( \pm R_1 \pm \mi R_2 \right) \bar{B} - \left( \pm R_1 \pm \mi R_2 \right) \left( \pm Q_1 \pm \mi Q_2 \right) \right\| \\
		&\leq \left\| I -  \bar{A}\bar{B} \right\| + 2 \rho \| \bar{A} \| + 2 r \| \bar{B} \| + 4 r \rho,
	\end{align*}
	which give us the last 3 terms of the right-hand side of the wanted result. Then, we expend
	\begin{align*}
		\left\| I -  \bar{A}\bar{B} \right\|	&= \left\| I -  fl(\bar{A}\bar{B}) + fl(\bar{A}\bar{B}) -  \bar{A}\bar{B} \right\| \leq \left\| I -  fl(\bar{A}\bar{B}) \right\|  + \left\| fl(\bar{A}\bar{B}) -  \bar{A}\bar{B}  \right\|.
	\end{align*}
	To complete the proof, we need to show that $ \left\| fl(\bar{A}\bar{B}) -  \bar{A}\bar{B}  \right\| \leq 4M_\epsilon C(\bar{A},\bar{B}) $. 
	Component-wise we have $	[ \bar{A} \bar{B}]_{ij} = \sum_{k = 1}^n   \bar{A}_{ik} \bar{B}_{kj} $ for $1 \leq i \leq m $ and $i \leq j \leq p$. Let $\bar{A}_{ik} \bydef \bar{A}^\Re_{ik} + \mi\bar{A}^\Im_{ik}$ and $\bar{B}_{kj} \bydef \bar{B}^\Re_{kj} + \mi\bar{B}^\Im_{kj}$ and using the previous result about floating point product for complex number, 
	\begin{align*}
		fl \left( \bar{A}_{ik} \bar{B}_{kj}  \right)  =&  ~ (\bar{A}^\Re_{ik} \bar{B}^\Re_{kj})(1+ \delta_1^k  + \delta_5^k + \delta_1^k \delta_5^k) - (\bar{A}^\Im_{ik}\bar{B}^\Im_{kj})(1+ \delta_2^k + \delta_5^k + \delta_2^k\delta_5^k) \\
		&+ \mi  (\bar{A}^\Im_{ik} \bar{B}^\Re_{kj})(1+ \delta_3^k +  \delta_6^k +  \delta_3^k  \delta_6^k) + \mi(\bar{A}^\Re_{ik}\bar{B}^\Im_{kj})( 1+\delta_4^k + \delta_6^k + \delta_4^k \delta_6^k)
	\end{align*}
	with $|\delta_\ell^k | \leq \epsilon  $ for $\ell \in \{ 1,2,3,4,5,6 \}$. Using the result about floating point summation on the first term give us 
	\begin{align*}
		fl \left( \sum_{k = 1}^n (\bar{A}^\Re_{ik} \bar{B}^\Re_{kj})(1+ \delta_1^k + \delta_5^k + \delta_1^k \delta_5^k) \right) = & ~ \bar{A}^\Re_{i1} \bar{B}^\Re_{1j} (1+ \delta_1^k + \delta_5^k + \delta_1^k \delta_5^k) \prod_{\ell = 1 }^{n-1} (1 + \delta^\Re_\ell) \\
		& +   \sum_{k = 2}^n \bar{A}^\Re_{ik} \bar{B}^\Re_{kj} (1+ p\delta_1^k + \delta_5^k + \delta_1^k \delta_5^k) \prod_{\ell = k-1}^{n-1} (1 + \delta^\Re_\ell)
	\end{align*}
	with $| \delta_\ell^\Re | < \epsilon $ for $\ell \in \{ 1,2,\dots,n-1\}$. We can bound the error of this term by
	\begin{align*}
		&\left| fl \left( \sum_{k = 1}^n (\bar{A}^\Re_{ik} \bar{B}^\Re_{kj})(1+ \delta_1^k + \delta_5^k + \delta_1^k \delta_5^k) \right) - \sum_{k = 1}^n \bar{A}^\Re_{ik} \bar{B}^\Re_{kj} \right| \\
		& =\left| \bar{A}^\Re_{i1} \bar{B}^\Re_{1j} (1+ \delta_1^k + \delta_5^k + \delta_1^k \delta_5^k) \prod_{\ell = 1 }^{n-1} (1 + \delta^\Re_\ell) \right. \\ 
		&\quad + \left.  \sum_{k = 2}^n \bar{A}^\Re_{ik} \bar{B}^\Re_{kj} (1+ p\delta_1^k + \delta_5^k + \delta_1^k \delta_5^k) \prod_{\ell = k-1}^{n-1} (1 + \delta^\Re_\ell) - \sum_{k = 1}^n \bar{A}^\Re_{ik} \bar{B}^\Re_{kj} \right| \\
		& =  \left| \bar{A}^\Re_{i1} \bar{B}^\Re_{1j}  \left( \prod_{\ell = 1 }^{n-1} (1 + \delta^\Re_\ell) -1 \right) + \bar{A}^\Re_{i1} \bar{B}^\Re_{1j} (1+ p\delta_1^k + \delta_5^k + \delta_1^k \delta_5^k)  \prod_{\ell = 1 }^{n-1} (1 + \delta^\Re_\ell) \right. \\ 
		&\quad +   \left. \sum_{k = 2}^n  \bar{A}^\Re_{ik} \bar{B}^\Re_{1k}  \left( \prod_{\ell = k-1}^{n-1} (1 + \delta^\Re_\ell) -1 \right) + \bar{A}^\Re_{ik} \bar{B}^\Re_{kj}  (1+ p\delta_1^k + \delta_5^k + \delta_1^k \delta_5^k)  \prod_{\ell = k-1}^{n-1} (1 + \delta^\Re_\ell) \right|\\
		& \leq  \left| \bar{A}^\Re_{i1} \bar{B}^\Re_{1j} \right| \left( (1+ \epsilon)^{n-1} -1 \right) + \left|\bar{A}^\Re_{i1} \bar{B}^\Re_{1j} \right| (2\epsilon + \epsilon^2)  (1 + \epsilon)^{n-1} \\ 
		&\quad +   \sum_{k = 2}^n \left| \bar{A}^\Re_{ik} \bar{B}^\Re_{1k} \right| \left( (1+ \epsilon)^{n-1} -1 \right) + \left|\bar{A}^\Re_{ik} \bar{B}^\Re_{kj} \right| (2\epsilon + \epsilon^2)  (1 + \epsilon)^{n-1} \\
		& = \sum_{k = 1}^n \left| \bar{A}^\Re_{ik} \bar{B}^\Re_{1k} \right| \left( (1+ \epsilon)^{n-1} -1 \right) + \left|\bar{A}^\Re_{ik} \bar{B}^\Re_{kj} \right| (2\epsilon + \epsilon^2)  (1 + \epsilon)^{n-1} \\
		& = [(1 + 2 \epsilon + \epsilon^2)(1+\epsilon)^{n-1} - 1]\sum_{k = 1}^n \left| \bar{A}^\Re_{ik} \bar{B}^\Re_{kj} \right| 
		 = M_\epsilon \sum_{k = 1}^n \left| \bar{A}^\Re_{ik} \bar{B}^\Re_{kj} \right|
		 = M_\epsilon  [|\bar{A}^\Re| |\bar{B}^\Re|]_{i,j} 
	\end{align*}
Using the same idea for the 3 other terms, we get
\[
\left|	fl \left( [ \bar{A} \bar{B} ] _{ij}  \right) - [ \bar{A} \bar{B} ] _{ij} \right|  \leq M_\epsilon \left( [|\bar{A}^\Re| |\bar{B}^\Re|]_{i,j} + [|\bar{A}^\Im| |\bar{B}^\Im|]_{i,j}  +[|\bar{A}^\Re| |\bar{B}^\Im|]_{i,j}  +[|\bar{A}^\Im| |\bar{B}^\Re|]_{i,j}    \right)
\]
Thus,
\[
		\| fl[\bar{A} \bar{B}]  - \bar{A} \bar{B}\|
		 \leq M_\epsilon \left( \| \bar{A}^\Re \| \| \bar{B}^\Im  \|  +\| \bar{A}^\Im  \| \| \bar{B}^\Re \|+\| \bar{A}^\Im  \| \| \bar{B}^\Re  \|+\| \bar{A}^\Re  \| \| \bar{B}^\Im  \| \right)  \leq  M_\epsilon C(A,B). \qedhere
\]
\end{proof}

%=========================================================================
% BIBLIOGRAPHY
%=========================================================================	
	%\newpage
	\bibliographystyle{unsrt}
	\bibliography{Bibliography}

% End Document
\end{document}